\theoremstyle{definition} 
\newtheorem{thm}{Theorem}[section]
\newtheorem{cor}[thm]{Corollary}
\newtheorem{prop}[thm]{Proposition}
\newtheorem{lem}[thm]{Lemma}
\newtheorem{conj}[thm]{Conjecture}
\theoremstyle{definition}
\newtheorem{defn}[thm]{Definition}
\newtheorem{rmk}[thm]{Remark}
\newtheorem{thmx}{Theorem}
\theoremstyle{remark}
\newcommand{\GL}{\mathrm{GL}}
\newcommand{\Aut}{\mathrm{Aut}}
\newcommand{\bZ}{\mathbb{Z}}
\newcommand{\bQ}{\mathbb{Q}}
\newcommand{\bR}{\mathbb{R}}
\newcommand{\bP}{\mathbb{P}}
\newcommand{\bA}{\mathbb{A}}
\newcommand{\bC}{\mathbb{C}}
\newcommand{\bF}{\mathbb{F}}
\newcommand{\mf}{\mathfrak}
\newcommand{\ms}{\mathscr}
\newcommand{\mc}{\mathcal}
\newcommand{\sm}{\smallsetminus}
\newcommand{\sub}{\subset}
\newcommand{\Spec}{\mathrm{Spec}}
\newcommand{\Mat}{\mathrm{Mat}}
\newcommand{\Prob}{\mathrm{Prob}}
\newcommand{\coker}{\mathrm{coker}}
\newcommand{\Inj}{\mathrm{Inj}}
\newcommand{\Stab}{\mathrm{Stab}}
\newcommand{\acts}{\lefttorightarrow}
\newcommand{\Mod}{\textbf{Mod}}
\newcommand{\ld}{\lambda}
\newcommand{\End}{\mathrm{End}}
\newcommand{\id}{\mathrm{id}}
\newcommand{\bs}{\boldsymbol}
\newcommand{\ol}{\overline}
\newcommand{\es}{\emptyset}
\newcommand{\ra}{\rightarrow}
\newcommand{\tra}{\twoheadrightarrow}
\newcommand{\hra}{\hookrightarrow}
\newcommand{\op}{\oplus}
\newcommand{\bop}{\bigoplus}
\newcommand{\Cl}{\mathrm{Cl}}
\newcommand{\Pic}{\mathrm{Pic}}
\newcommand{\IQ}{\textbf{IQ}}
\newcommand{\be}{\begin{enumerate}}
\newcommand{\ee}{\end{enumerate}}
\newcommand{\llb}{\llbracket}
\newcommand{\rrb}{\rrbracket}
\numberwithin{equation}{section}
\begin{document}

\title[Random matrices over complete discrete valuation rings]{Cohen-Lenstra distributions via random matrices over complete discrete valuation rings with finite residue fields}
\date{\today}
\author{Gilyoung Cheong and Yifeng Huang}
\address{Department of Mathematics, University of Michigan, 530 Church Street, Ann Arbor, MI 48109-1043, USA}
\email{gcheong@umich.edu, huangyf@umich.edu}

\begin{abstract}
Let $(R, \mf{m})$ be a complete discrete valuation ring with the finite residue field $R/\mf{m} = \bF_{q}$. Given a monic polynomial $P(t) \in R[t]$ whose reduction modulo $\mf{m}$ gives an irreducible polynomial $\ol{P}(t) \in \bF_{q}[t]$, we initiate the investigation of the distribution of $\coker(P(A))$, where $A \in \Mat_{n}(R)$ is randomly chosen with respect to the Haar probability measure on the additive group $\Mat_{n}(R)$ of $n \times n$ $R$-matrices. One of our main results generalizes two results of Friedman and Washington. Our other results are related to the distribution of the $\ol{P}$-part of a random matrix $\ol{A} \in \Mat_{n}(\bF_{q})$ with respect to the uniform distribution, and one of them generalizes a result of Fulman. We heuristically relate our results to a celebrated conjecture of Cohen and Lenstra, which predicts that given an odd prime $p$, not dividing $q$, any finite abelian $p$-group (i.e., $\bZ_{p}$-module) $H$ occurs as the $p$-part of the class group of a random imaginary quadratic field extension of $\bQ$ with a probability inversely proportional to $|\Aut_{\bZ}(H)|$. We review three different heuristics for the conjecture of Cohen and Lenstra, and they are all related to special cases of our main conjecture, which we prove as our main theorems. For proofs, we use some concrete combinatorial connections between $\Mat_{n}(R)$ and $\Mat_{n}(\bF_{q})$ to translate our problems about a Haar-random matrix in $\Mat_{n}(R)$ into problems about a random matrix in $\Mat_{n}(\bF_{q})$ with respect to the uniform distribution.
\end{abstract}

\maketitle

\subsection*{Standard notations} We write $p$ to mean an arbitrary prime (number) and $q$ an arbitrary prime power. We do not assume any relations between $p$ and $q$, unless specified otherwise. We fix arbitrary $n \in \bZ_{\geq 0}$, although we will often use it as index or let it go to infinity. By a \textbf{ring}, we mean a commutative ring with the multiplicative identity $1$. By a \textbf{distribution}, we mean a probability measure. Given an ideal $\mf{I}$ of a ring $R$ and a module $M$ over it, we define

\[M[\mf{I}^{\infty}] := \{x \in M : \mf{I}^{N}x = 0 \text{ for } N \gg 0\}\]

\

and call it the \textbf{$\mf{I}^{\infty}$-torsion} or the \textbf{$\mf{I}$-part} of $M$. If $M = M[\mf{I}^{\infty}]$, we call $M$ an \textbf{$\mf{I}^{\infty}$-torsion module}. For $t \in R$, we say \textbf{$t^{\infty}$-torsion} or \textbf{$t$-part} to mean $(t)^{\infty}$-torsion or $(t)$-part, and write $M[t^{\infty}] := M[(t)^{\infty}]$. We write $\Mat_{n}(R)$ to mean the set of $n \times n$ matrices over $R$, and $I_{n} \in \Mat_{n}(R)$ means the identity matrix. 

\

\section{Introduction}\label{intro}

\hspace{3mm} In number theory, an influential conjecture of Cohen and Lenstra \cite{CL} states that when $p$ is odd, a fixed finite abelian $p$-group $H$ occurs as the $p$-part of the class group $\Cl_{K}$ of a random imaginary quadratic field extension $K$ of $\bQ$ with a probability inversely proportional to $|\Aut_{\bZ}(H)|$.

\

\begin{conj}[Cohen-Lenstra]\label{CLconj} Given notations above, we must have

$$\lim_{N \ra \infty}\Prob_{K \in \IQ_{\leq N}}(\Cl_{K}[p^{\infty}] \simeq H) = \frac{1}{|\Aut_{\bZ}(H)|}\prod_{i=1}^{\infty}(1 - p^{-i}),$$

\

where $\IQ_{\leq N}$ is the set of imaginary quadratic fields over $\bQ$ whose absolute discriminant is $\leq N$ and the probability is given uniformly at random in this set. 
\end{conj}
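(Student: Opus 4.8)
A complete proof of Conjecture~\ref{CLconj} is not available --- it is among the most famous open problems in arithmetic statistics --- so what follows describes the strategy one would pursue and pinpoints exactly where it breaks down. The organizing principle is the random matrix heuristic of Friedman and Washington: the $p$-part $\Cl_K[p^\infty]$ of an imaginary quadratic field $K$, as $K$ ranges over $\IQ_{\leq N}$ with $N \ra \infty$, ought to be distributed like $\coker(A)$ for a Haar-random $A \in \Mat_n(\zp)$ with $n \ra \infty$. By the theorem of Friedman and Washington, generalized in this paper, the latter distribution is
\[
\lim_{n \ra \infty}\Prob_{A \in \Mat_n(\zp)}(\coker(A) \simeq H) = \frac{1}{|\Aut_{\bZ}(H)|}\prod_{i=1}^{\infty}(1 - p^{-i}),
\]
which is precisely the right-hand side of the conjecture. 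Thus the random matrix half of the problem, which is what this paper controls, is completely settled; the entire remaining content of Conjecture~\ref{CLconj} is the \emph{arithmetic} assertion that class groups of imaginary quadratic fields genuinely equidistribute according to this model.

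The one setting in which this program can actually be executed is the function field analogue, obtained by replacing $\bQ$ with $\fq(t)$ (with $p \nmid q$) and imaginary quadratic fields with quadratic extensions cut out by squarefree polynomials of growing degree. There $\Cl_K[p^\infty]$ is computed as $\coker(\mathrm{Frob}_q - 1)$ acting on the $p$-adic Tate module of the Jacobian of the associated hyperelliptic curve, so the distribution of $\Cl_K[p^\infty]$ reduces to understanding how Frobenius conjugacy classes distribute in $\mathrm{Sp}_{2g}(\zp)$ as the genus $g \ra \infty$ and then $q \ra \infty$. Deligne's equidistribution theorem controls the limit in $q$ for fixed $g$, homological stability for Hurwitz spaces (Ellenberg--Venkatesh--Westerland) controls the limit in $g$, and the surviving group integral is of the same flavor as the matrix computations over a complete discrete valuation ring carried out in this paper. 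Assembling these inputs yields the function field form of the conjecture in the iterated limit; but there is no known way to descend to a fixed small $q$, and the passage back to $\bQ$ is entirely out of reach of present methods.

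Over $\bQ$ itself the only foothold is the method of moments: one would compute every mixed moment $\bE_{K \in \IQ_{\leq N}}[\#\Surj(\Cl_K[p^\infty], G)]$ over finite abelian $p$-groups $G$, and then invoke the theorem of Wang and Wood that these moments determine the limiting distribution and, when they grow slowly enough, force it to be the Cohen--Lenstra distribution. The first moment for $p = 3$ is the Davenport--Heilbronn theorem, and a handful of further moments are accessible through Bhargava's parametrizations of rings of small rank and related work, but this is the main obstacle: computing the $k$-th moment amounts to counting number fields of degree growing with $k$ and with delicate prescribed ramification, and no technique handles all $G$ simultaneously. In short, the matrix-theoretic side developed in this paper supplies the predicted answer unconditionally, and the gap that remains --- an equidistribution statement for class groups over $\bQ$ --- is precisely the part for which current methods are inadequate.
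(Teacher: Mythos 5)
Your assessment is exactly right: the statement is an open conjecture, the paper offers no proof of it (it explicitly notes it is "wide open except for the case $p=3$") and uses it only as motivation for the random-matrix theorems it does prove. Your survey of the heuristic evidence — the Friedman--Washington cokernel model, the function field analogue via Ellenberg--Venkatesh--Westerland, and the Davenport--Heilbronn/moment method for $p=3$ — matches the evidence the paper itself cites, so there is nothing to correct.
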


\

\hspace{3mm} Let $n \in \bZ_{\geq 1}$ be the size of a finite set $S$ of some maximal ideals of $\mc{O}_{K}$, the ring of integers of $K$, that generate $\Cl_{K}$, as it is a finite abelian group. Then considering the exact sequence

$$\mc{O}_{K}^{S,\times} \ra \mf{I}_{K}^{S} \ra \Cl_{K} \ra 0,$$

\

where $\mc{O}_{K}^{S,\times} := \{x \in K^{\times} : x\mc{O}_{K} \text{ can be written as a product of positive/negative powers of ideals in }S\}$ and $\mf{I}^{S}_{K}$ is the abelian group of fractional ideals that can be written as a product of positive/negative powers of ideals in $S$, the fact that $\mc{O}_{K}^{S, \times}$ is a finitely generated abelian group of rank $n = |S|$ (because $K$ is imaginary quadratic) lets us have the following exact sequence:

$$\bZ^{n} \ra \bZ^{n} \ra \Cl_{K} \ra 0.$$

\

Applying $(-) \otimes_{\bZ} \bZ_{p}$, we have the exact sequence

\[\bZ_{p}^{n} \ra \bZ_{p}^{n} \ra \Cl_{K}[p^{\infty}] \ra 0,\]

\

so a heuristic approach to examine Conjecture \ref{CLconj} is to compute the cokernel of a ``random'' $\bZ_{p}$-linear map $\bZ_{p}^{n} \ra \bZ_{p}^{n}$. Friedman and Washington (Proposition 1 in \cite{FW}) proved that 

\begin{align}\label{heu1}
\lim_{n \ra \infty}\Prob_{A \in \Mat_{n}(\bZ_{p})}(\coker(A) \simeq H) = \frac{1}{|\Aut_{\bZ}(H)|} \prod_{i=1}^{\infty}( 1 - p^{-i} ),
\end{align}

\

where the probability measure on $\Mat_{n}(\bZ_{p})$ is given by the Haar measure with total measure $1$.

\

\begin{rmk} We learned the above exposition from a talk given by Wood \cite{Wood}.
\end{rmk}

\

\hspace{3mm} Next, we briefly review another heuristic due to Friedman and Washington regarding an analogous statement to Conjecture \ref{CLconj} replacing $\bQ$ with $\bF_{q}(t)$. Note that any quadratic extension $K$ of $\bQ$ can be written as the form $K = \bQ(\sqrt{d})$ for some square-free integer $d$. The extension $K$ of $\bQ$ is imaginary if and only if $d < 0$, and this is equivalent to requiring that it has one place above infinity. In the case of dealing with a quadratic extension $K$ of $\bF_{q}(t)$, we assume $q$ is odd and restrict to the case $K = \bF_{q}(t)(\sqrt{d(t)})$ for some square-free $d(t) \in \bF_{q}[t]$ of degree $2g + 1$ with $g \in \bZ_{\geq 1}$. In this case, the corresponding smooth, projective, and geometrically irreducible curve $C_{K}$ over $\bF_{q}$ to $K$ has genus $g$. As a double cover over $\bP^{1}_{\bF_{q}}$, the curve $C_{K}$ has one $\bF_{q}$-point $\mf{p}$ above $\infty = [0:1] \in \bP^{1}(\bF_{q})$. This implies that we have an isomorphism $\Cl_{K} \simeq \Pic^{0}(C_{K})$, given by $[D] \mapsto [D] - \deg(D) [\mf{p}]$, where $\Pic^{0}(C_{K})$ is the abelian group of the degree $0$ divisor classes on $C_{K}$. Friedman and Washington (Section 5 of \cite{FW}) explained that the $p$-part of $\Pic^{0}(C_{K})$ occurs as the cokernel of $A - \id$ of the $p$-adic Tate module of $\Pic^{0}(C_{K} \times_{\bF_{q}} \ol{\bF_{q}})$, where $A$ is the auotomorphism of the Tate module induced by the Frobenius and $\id$ is the identity. The $p$-adic Tate module of $\Pic^{0}(C_{K} \times_{\bF_{q}} \ol{\bF_{q}})$ is known to be a free $\bZ_{p}$-module with rank $2g$, where $g$ is the genus of $C_{K}$ (e.g., p.34 of \cite{Mil}), so we have the exact sequence

$$\bZ_{p}^{2g} \xrightarrow{A - I_{2g}} \bZ_{p}^{2g} \ra \Cl_{K}[p^{\infty}] \ra 0,$$

\

where $A \in \GL_{2g}(\bZ_{p})$. As a supporting heuristic that these class groups also follow the same pattern to Conjecture \ref{CLconj}, Friedman and Washington (Section 4 of \cite{FW}) proved that

\begin{align}\label{heu2}
\lim_{n \ra \infty}\Prob_{A \in \GL_{n}(\bZ_{p})}(\coker(A - I_{n}) \simeq H) = \frac{1}{|\Aut_{\bZ}(H)|} \prod_{i=1}^{\infty}( 1 - p^{-i} ),
\end{align}

\

where the probability is given by restricting the Haar measure on $\Mat_{n}(\bZ_{p})$ to $\GL_{n}(\bZ_{p})$ and then normalizing it so that we get the total measure $1$.

\

\hspace{3mm} The two heuristic results (\ref{heu1}) and (\ref{heu2}) involve different mathematical objects in their motivations. The fact that these numerical results are the same was refereed as ``blurring'' by Friedman and Washington (Section 4 of \cite{FW}) for their own heuristic reason (Section 1 of \cite{FW}). In this paper, we show that these two results are consequences of a more general phenomenon. For instance, Theorem \ref{main3x} (with $R = \bZ_{p}$) states that given any monic polynomials $P_{1}(t), \dots, P_{r}(t) \in \bZ_{p}[t]$ such that the reduction modulo $p$ gives distinct irreducible polynomials in $\bF_{p}[t]$ and $\deg(P_{r}) = 1$, we have

\begin{align*}
\lim_{n \ra \infty}\Prob_{A \in \Mat_{n}(\bZ_{p})}\left(
\begin{array}{c}
\coker(P_{1}(A)) = \cdots = \coker(P_{r-1}(A)) = 0, \\
\coker(P_{r}(A)) \simeq H
\end{array}\right) = \frac{1}{|\Aut_{\bZ_{p}}(H)|} \prod_{j=1}^{r} \prod_{i=1}^{\infty}(1 - p^{-i\deg(P_{j})}).
\end{align*}

\

This immediately imply both results of Friedman and Washington. (See Corollary \ref{FW2} and its proof.) Moreover, our theorem contains more than (\ref{heu1}) and (\ref{heu2}). For instance, if $p$ is chosen that $-1$ is not a square in $\bF_{p}$, the above implies that

\begin{align*}
\lim_{n \ra \infty}\Prob_{A \in \GL_{n}(\bZ_{p})}\left(
\begin{array}{c}
\coker(A^{2} + I_{n}) = 0, \\
\coker(A - I_{n}) \simeq H
\end{array}\right) = \frac{1}{|\Aut_{\bZ}(H)|} \left(\prod_{i=1}^{\infty}( 1 - p^{-i} ) \right) \left(\prod_{j=1}^{\infty}( 1 - p^{-2j} ) \right).
\end{align*}

\

\begin{rmk} Despite the sounding heuristic, Conjecture \ref{CLconj} is notorious for its difficulty, and it is wide open except for the case $p = 3$. (Some progress for $p = 3$ in terms of ``surjection moment'' method due to Davenport and Heilbronn is explained in Section 8.5 of \cite{EVW}.) On the other hand, there has been a quantitative breakthrough for an analogous statement replacing $\bQ$ with $\bF_{q}(t)$ (for large $g$ and $q$ such that $q \not\equiv 0, 1 \mod p$) due to Ellenberg, Venkatesh, and Westerland (Theorem 1.2 of \cite{EVW}), using more geometric methods. Our work is not directly related to proving Conjecture \ref{CLconj}, but it connects different results used as heuristic evidence for the conjecture.
\end{rmk}

\

\hspace{3mm} It is interesting that the above result resembles the distribution given by (\ref{heu1}) and (\ref{heu2}) on the set of finite abelian $p$-groups, called the \textbf{Cohen-Lenstra distribution} (e.g., Section 8.1 of \cite{EVW}), and this motivates a more general definition of the Cohen-Lenstra distribution, which we will discuss in Section \ref{main}. This computation is also in accordance with the philosophy of ``universality'' described by Wood \cite{Woo19}, which essentially states that the distributions we construct with random matrices tend to follow the Cohen-Lenstra distribution and their variants. Indeed, Wood dealt with various probability measures on $\Mat_{n}(\bZ_{p})$ extensively generalizing the Haar measure case and showed that, asymptotically in $n$, the cokernel of a random $A \in \Mat_{n}(\bZ_{p})$ with respect to these measures follow the Cohen-Lenstra distribution (Theorem 1.2 of \cite{Woo19}). Our paper will stick with the Haar measure and its pushforwards given by the polynomial maps $P_{1}, \dots, P_{r} : \Mat_{n}(\bZ_{p}) \ra \Mat_{n}(\bZ_{p})$.

\

\hspace{3mm} In the next section, we give an even more general conjecture (Conjecture \ref{conj}). Some of our main theorems are special cases of this conjecture. We separated the main theorems as Theorem \ref{main1x}, Theorem \ref{main2x}, and Theorem \ref{main3x}, because their proofs are different. Theorem \ref{main1x} and Theorem \ref{main2x} can be equivalently stated as statements about a random matrix in $\Mat_{n}(\bF_{q})$, with respect to the uniform distribution. These equivalent statements are given in Theorem \ref{main1} and Theorem \ref{main2}, and in Section \ref{CLphil}, we will see that they are related to another computational heuristic of Conjecture \ref{CLconj} due to Cohen and Lenstra \cite{CL}.

\

\subsection{Acknowledgment} We would like to thank our advisor Michael Zieve for various supports, including the financial supports for the relevant travelings through NSF grant DMS-1162181 for G. Cheong and DMS-1601844 for Y. Huang. We thank Karen Smith and the University of Michigan, for nominating and granting Rackham one-term dissertation fellowship to both of us. Y. Huang thanks Professor Emeritus Gopal Prasad for the Prasad Family Fellowship. G. Cheong was supported by the University of Michigan and the University of California--Irvine, for a traveling relevant to this work. We thank Sasha Barvinok, Kwun Chung, Ofir Gorodetsky, Haoyang Guo, Nathan Kaplan, Hendrik Lenstra, Eric Rains, and Melanie Matchett Wood for helpful conversations regarding this paper. We thank Jordan Ellenberg to bring our attention to \cite{Woo19}. We also thank Jason Fulman, Yuan Liu, and Brad Rodgers for helpful conversations regarding the previous versions of this paper. Finally, we thank Zhan Jiang for helping us initiate this project.

\

\section{Main conjecture and theorems}\label{main}

\hspace{3mm} Instead of $\bZ_{p}$, we will work more generally with any complete DVR (discrete valuation ring) $R$ with the maximal ideal $\mf{m}$, or simply denoted as $(R, \mf{m})$, whose residue field $R/\mf{m}$ is finite so that we may write $R/\mf{m} = \bF_{q}$. For any such $R$, saying that an $R$-module has finite size is equivalent to saying that it is of finite length. Finite abelian $p$-groups are finite size $\bZ_{p}$-modules, so they are finite length $\bZ_{p}$-modules. The following statement with $R = \bZ_{p}$ was given as Proposition 1 of \cite{FW}, and the proof given there works for general $R$.

\

\begin{prop}[Friedman-Washington]\label{FW} Let $(R, \mf{m})$ be a complete DVR with $R/\mf{m} = \bF_{q}$. Given any finite length $R$-module $H$, we have

$$\Prob_{A \in \Mat_{n}(R)}(\coker(A) \simeq H) = \left\{
	\begin{array}{ll}
	|\Aut_{R}(H)|^{-1} \left[ \prod_{i=1}^{n}( 1 - q^{-i} ) \right]\left[ \prod_{j=n-l_{H}+1}^{n} ( 1 - q^{-j} ) \right] & \mbox{if } n \geq l_{H},  \\
	0 & \mbox{if } n < l_{H},
	\end{array}\right.$$

\

where $l_{H} := \dim_{\bF_{q}}(H/\mf{m}H)$. In particular, we have

$$\lim_{n \ra \infty}\Prob_{A \in \Mat_{n}(R)}(\coker(A) \simeq H) = 
	\frac{1}{|\Aut_{R}(H)|} \prod_{i=1}^{\infty}( 1 - q^{-i} ).$$
\end{prop}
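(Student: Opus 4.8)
The plan is to count, for fixed $n \geq l_H$, the number of matrices $A \in \Mat_n(R/\mf{m}^k)$ with $\coker(A) \simeq H$ for $k$ large enough that $\mf{m}^k H = 0$; since the Haar measure on $\Mat_n(R)$ pushes forward to the uniform measure on $\Mat_n(R/\mf{m}^k)$, and the isomorphism type of $\coker(A)$ as an $R$-module depends only on $A \bmod \mf{m}^k$ once $k$ exceeds the exponent of $H$, this ratio computes the desired probability exactly. First I would reduce the count of such matrices to a count of surjections: a matrix $A : R^n \to R^n$ with $\coker(A) \simeq H$ determines (and is essentially determined by) a surjection $\pi : R^n \tra H$ together with the data of the kernel, which by the structure theory over the DVR $R$ is a free submodule of $R^n$ of rank $n$ mapping isomorphically onto $\ker(\pi) \subset R^n$ up to the correct elementary divisors. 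More precisely, I would use the standard bijection (as in Friedman--Washington) between $\{A : \coker A \simeq H\}$ and pairs consisting of a surjection $R^n \tra H$ and a choice of basis/automorphism data, so that
\[
\#\{A \in \Mat_n(R/\mf{m}^k) : \coker(A) \simeq H\} = \frac{\#\Surj_R(R^n, H) \cdot \#\{\text{matrices with fixed cokernel presentation}\}}{|\Aut_R(H)|},
\]
the factor $|\Aut_R(H)|$ accounting for the overcounting when we forget the identification $\coker(A) \simeq H$.

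Next I would compute $\#\Surj_R((R/\mf{m}^k)^n, H)$. Since $H$ is a finite length module over the local ring $R$, a map $R^n \to H$ is surjective if and only if its reduction modulo $\mf{m}$ is surjective onto $H/\mf{m}H \simeq \bF_q^{l_H}$ (Nakayama). The number of such maps is $|H|^n$ times the probability that a uniform map $\bF_q^n \to \bF_q^{l_H}$ is surjective, which is the classical product $\prod_{j=0}^{l_H - 1}(1 - q^{j - n}) = \prod_{j=n-l_H+1}^{n}(1 - q^{-j})$ — this explains the second bracketed product in the statement. The first bracketed product $\prod_{i=1}^n (1 - q^{-i})$ should emerge from the remaining enumerative step: once the surjection $\pi$ is fixed, the matrices $A$ with $\im(A) = \ker(\pi)$ are in bijection with the $R$-linear isomorphisms $R^n \xrightarrow{\sim} \ker(\pi)$, and $\ker(\pi)$ is free of rank $n$ (kernel of a surjection from a free module onto a finite module over a DVR is free of the same rank), so the count of such isomorphisms, divided by the total $|R/\mf{m}^k|^{n^2}$, contributes exactly $|H|^{-n} \prod_{i=1}^n(1 - q^{-i})$, the density of $\GL_n$. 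Assembling these pieces and canceling the $|H|^{\pm n}$ factors yields the displayed formula for $n \geq l_H$; for $n < l_H$ there is no surjection $R^n \tra H$, so the probability is $0$.

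Finally, the limiting statement follows by taking $n \to \infty$: the second product $\prod_{j=n-l_H+1}^n(1 - q^{-j}) \to 1$ since all its factors tend to $1$, while $\prod_{i=1}^n(1-q^{-i}) \to \prod_{i=1}^\infty(1 - q^{-i})$, a convergent infinite product because $\sum q^{-i} < \infty$. I expect the main obstacle to be making the bijection between matrices with a prescribed cokernel and surjections-plus-automorphism-data precise over the truncated ring $R/\mf{m}^k$ — in particular verifying that choosing $k$ past the exponent of $H$ makes the cokernel's isomorphism type well-defined and that the kernel submodule is genuinely free of full rank (which uses that $R/\mf{m}^k$ is not a domain, so one must argue with lengths or lift to $R$ rather than invoke the structure theorem naively). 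Everything else is the standard $\GL_n$ density computation and a convergence check.
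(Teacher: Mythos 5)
Your outline is correct and recovers the stated formula, but be aware that the paper does not actually reprove Proposition \ref{FW}: it cites Proposition 1 of \cite{FW} and notes the proof works verbatim for a general complete DVR. Your argument is essentially that original Friedman--Washington count: decompose the event $\{\coker(A)\simeq H\}$ according to the submodule $M=\im(A)\subseteq R^{n}$; the number of such $M$ is $\#\Surj_{R}(R^{n},H)/|\Aut_{R}(H)|$, with $\#\Surj_{R}(R^{n},H)=|H|^{n}\prod_{j=n-l_{H}+1}^{n}(1-q^{-j})$ by Nakayama; and for each fixed $M$ the set $\{A:\im(A)=M\}$ has Haar measure $|H|^{-n}\prod_{i=1}^{n}(1-q^{-i})$, since $M$ is free of rank $n$ and a map $R^{n}\to M$ is onto iff it is an isomorphism. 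Two caveats you yourself flag should be settled exactly as the paper does elsewhere: the event $\{\coker(A)\simeq H\}$ is determined by $A$ modulo $\mf{m}^{N+1}$ where $\mf{m}^{N}H=0$ (Lemmas \ref{red} and \ref{Haar}); reducing only modulo a power killing $H$ is insufficient when that power equals the exponent (e.g.\ $H=R/\mf{m}^{N}$, $n=1$, where $\pi^{N}$ and $\pi^{N+1}$ have the same reduction but different cokernels), so your later condition ``$k$ strictly past the exponent'' is the right one. Likewise the kernel-freeness and image-counting step is cleanest carried out over $R$ itself with the Haar measure (the event is a finite disjoint union of the Borel sets $\{\im(A)=M\}$), since over the truncation $R/\mf{m}^{k}$ the kernel of a surjection onto $H$ is generally not free. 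Finally, note that the paper sketches a genuinely different derivation of Proposition \ref{FW} in the remark following Corollary \ref{FW2}: condition on $\ol{A}=A\bmod\mf{m}$, count lifts with prescribed cokernel via Lemma \ref{count}, and combine with the number of corank-$l_{H}$ matrices in $\Mat_{n}(\bF_{q})$; that route ties the proposition to the $\bF_{q}$-matrix statistics driving the rest of the paper, whereas yours is the self-contained lattice/surjection count of \cite{FW}.
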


\

\hspace{3mm} Our paper generalizes the limiting distribution (i.e., the probability when $n$ goes to infinity) in Proposition \ref{FW} as Theorem \ref{main3x}. We also propose a more general conjecture in Conjecture \ref{conj} and solve more cases of it as Theorem \ref{main1x} and Theorem \ref{main2x}, which will be related to Conjecture \ref{CLconj} in a different way.

\

\subsection*{Notations} Given any ring $R$, we denote by $\Mod_{R}^{< \infty}$ the set of isomorphism classes of finite size $R$-modules. When $(R, \mf{m})$ is a DVR with $R/\mf{m} = \bF_{q}$, this is the same as the set of isomorphism classes of finite length $R$-modules. When denoting an isomorphism class, we will interchangeably write a representative of it to denote the class.

\

\begin{rmk} It turns out that for any DVR $(R, \mf{m})$ with $R/\mf{m} = \bF_{q}$, the assignment 

$$\{H\} \mapsto \frac{1}{|\Aut_{R}(H)|} \prod_{i=1}^{\infty}( 1 - q^{-i} )$$

\

defines a probability measure on the finest $\sigma$-algebra on $\Mod_{R}^{< \infty}$ (e.g., Remark \ref{sum=1}). We call this the \textbf{Cohen-Lenstra distribution of $R$}, although the terminology is mostly used for the case $R = \bZ_{p}$ in literature (e.g., Section 8 of \cite{EVW}). Since $R$ is a PID (principal ideal domain), for any finite length $R$-module $H$, we have a unique partition $\ld = (\ld_{1}, \dots, \ld_{l})$ such that

$$H \simeq R/\mf{m}^{\ld_{1}} \op \cdots \op R/\mf{m}^{\ld_{l}}.$$

\

In this case, we will write $\ld(H) := \ld$. A result of Macdonald ((1.6) on p.181 of \cite{Mac}) states that the number $|\Aut_{R}(H)|$ only depends on $q = |R/\mf{m}|$ and $\ld$ so that we may write $w(q, \ld) = |\Aut_{R}(H)|$. Using this and Lemma \ref{key} with $y = 1$, one may check that

$$\ld \mapsto \frac{1}{w(q, \ld)} \prod_{i=1}^{\infty}( 1 - q^{-i} )$$

\

defines a probability distribution on the set $\mc{P}$ of partitions of nonnegative integers. We will not name this more general distribution because it will only appear in our conjecture, not in any of our theorems, but we think that Cohen and Lenstra were aware of these distributions given the context of \cite{CL}. Fulman and Kaplan \cite{FK} discussed other similar distributions defined on $\mc{P}$ that come up in various combinatorial contexts.
\end{rmk}

\

\subsection{Main conjecture and theorems} We first introduce our main conjecture about a random matrix $A \in \Mat_{n}(R)$, where $(R, \mf{m})$ is a complete DVR such that $R/\mf{m} = \bF_{q}$. We will resolve special cases of this conjecture as Theorem \ref{main2x} and Theorem \ref{main3x} by understanding interplays between random matrices $A \in \Mat_{n}(R)$ and $\ol{A} \in \Mat_{n}(\bF_{q})$, where the latter is given by the uniform distribution on $\Mat_{n}(\bF_{q})$.

\

\begin{conj}\label{conj} Let $(R, \mf{m})$ be a complete DVR such that $R/\mf{m} = \bF_{q}$ and $P_{1}(t), \dots, P_{r}(t) \in R[t]$ monic polynomials such that the reduction modulo $\mf{m}$ gives distinct irreducible polynomials $\ol{P}_{1}(t), \dots, \ol{P}_{r}(t) \in \bF_{q}[t]$. Fix any $R$-modules $H_{1}, \dots, H_{r}$ of finite length. We must have

$$\lim_{n \ra \infty}\Prob_{A \in \Mat_{n}(R)}\left(\begin{array}{c}
\coker(P_{j}(A)) \simeq H_{j} \\
\text{ for } 1 \leq j \leq r
\end{array}\right) = \prod_{j=1}^{r} \frac{1}{w(q^{\deg(P_{j})}, \ld(H_{j}))} \prod_{i=1}^{\infty}(1 - q^{-i\deg(P_{j})}).$$
\end{conj}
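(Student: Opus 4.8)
The plan is to reduce the mod-$\mf{m}$ setup to a finite-field computation via a structural dévissage, then invoke an independence phenomenon across the distinct irreducible factors. First I would observe that $\coker(P_j(A))$ is automatically a finite length $R$-module that is supported only at primes dividing $P_j(A)$; since $\ol P_j$ is irreducible over $\bF_q$, the quotient $R[t]/(P_j(t), \mf m)$ is the field $\bF_{q^{\deg P_j}}$, so $S_j := R[t]/(P_j(t))$ is itself a complete DVR with residue field $\bF_{q^{\deg P_j}}$ and uniformizer the image of $\mf m$ (a Cohen-structure / Hensel argument). Giving $R^n$ the $R[t]$-module structure in which $t$ acts as $A$, the module $\coker(P_j(A))$ becomes $R^n/P_j(A)R^n$, which is naturally a module over $S_j$, and I would want to say that as $A$ ranges Haar-randomly over $\Mat_n(R)$, the induced $S_j$-cokernel looks like the cokernel of a Haar-random square matrix over $S_j$ of appropriately growing size. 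If that holds, Proposition \ref{FW} applied over the DVR $S_j$ — whose residue cardinality is $q^{\deg P_j}$ — produces exactly the single-factor limiting probability $w(q^{\deg P_j}, \ld(H_j))^{-1}\prod_{i\ge 1}(1 - q^{-i\deg P_j})$.

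Next I would handle the joint statement. The key point is that the events ``$\coker(P_j(A)) \simeq H_j$'' for different $j$ should become asymptotically independent because the $\ol P_j$ are coprime: by the analogue of the primary decomposition, $\coker\big((\prod_j P_j)(A)\big)$ splits as $\bigoplus_j \coker(P_j(A))$ once we localize, and more importantly the ``$\ol P_j$-primary part'' of $\coker(f(A))$ and the ``$\ol P_k$-primary part'' decouple. Concretely I expect the cleanest route is via the reduction to $\Mat_n(\bF_q)$ advertised in the introduction (the combinatorial dictionary between $\Mat_n(R)$ and $\Mat_n(\bF_q)$): a Haar-random $A \in \Mat_n(R)$ has a reduction $\ol A \in \Mat_n(\bF_q)$ that is uniform, and conditioning on the $\ol P_j$-generalized-eigenspace data of $\ol A$ one controls $\coker(P_j(A))$ in terms of a Haar-random matrix over the corresponding unramified extension ring; the distinct-factor independence is then the statement that the generalized eigenspace decomposition of a uniform $\ol A \in \Mat_n(\bF_q)$ distributes its ``$\ol P_j$-blocks'' independently in the $n \to \infty$ limit, which is the sort of fact underlying Fulman's results quoted later in the paper. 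I would multiply the per-factor limits together to get the claimed product.

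The main obstacle, I expect, is making the ``$\coker(P_j(A))$ behaves like a Haar-random cokernel over $S_j$'' step precise and uniform enough to pass to the limit: one has to show that the size of the relevant $S_j$-matrix goes to infinity with $n$ (roughly $n/\deg P_j$, controlled by the generic rank of $\ol P_j(\ol A)$), and that the error terms in approximating the conditional distribution by a genuine Haar distribution over $S_j$ are summable/negligible as $n \to \infty$. A secondary subtlety is the non-reduced structure of $R[t]/(P_j(t))$ when $P_j$ is not itself irreducible over $R$ (it could be a higher power of an irreducible, or merely irreducible mod $\mf m$ but reducible over $R$); the Hensel argument shows $R[t]/(P_j(t))$ is still a DVR, so this is fine, but it must be spelled out. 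Finally, one must confirm the interchange of the limit with the product over $j$, i.e. a tightness/finiteness argument (this is where Lemma \ref{key} and Remark \ref{sum=1} — the fact that the Cohen-Lenstra masses sum to one — do the work of ruling out escape of mass).
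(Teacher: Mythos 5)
The statement you were asked to prove is Conjecture \ref{conj}; the paper does not prove it. Only two families of special cases are established there: all $H_{j}=0$ (Theorem \ref{main2x}) and the case $\deg(P_{r})=1$ with $H_{1}=\cdots=H_{r-1}=0$ (Theorem \ref{main3x}). Your proposal does not close this gap. Its central step --- that for Haar-random $A\in\Mat_{n}(R)$ the $S_{j}$-module $\coker(P_{j}(A))$, where $S_{j}=R[t]/(P_{j}(t))$, ``looks like the cokernel of a Haar-random square matrix over $S_{j}$ of appropriately growing size'' --- is asserted, not argued, and it is essentially equivalent to the $r=1$ case of the conjecture itself, which is open in the paper whenever $\deg(P_{j})\geq 2$ and $H_{j}\neq 0$. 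Your framing of $S_{j}$ as a complete DVR with residue field $\bF_{q^{\deg P_{j}}}$ and of $\coker(P_{j}(A))$ as an $S_{j}$-module is correct and is indeed the structure implicit in the appearance of $w(q^{\deg P_{j}},\ld(H_{j}))$ on the right-hand side, but naming the target ring is not the same as producing the equidistribution statement over it.

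Concretely, the reason this step is hard is visible in how the paper handles its known cases. The passage from a uniform $\ol{A}\in\Mat_{n}(\bF_{q})$ to the Haar-random $A\in\Mat_{n}(R)$ is made via Lemma \ref{Haar} together with a count of lifts of $\ol{A}$ to $\Mat_{n}(R/\mf{m}^{N+1})$ with prescribed $\coker(P(A))$; the only such count available is Lemma \ref{count} (Friedman--Washington), which is specific to $\deg(P)=1$, and when $H=0$ the problem degenerates because $\coker(P_{j}(A))=0$ is detected already modulo $\mf{m}$ by Nakayama. For $\deg(P_{j})\geq 2$ and $H_{j}\neq 0$ no analogue of Lemma \ref{count} is supplied, and your proposal offers none; conditioning on the $\ol{P}_{j}$-generalized-eigenspace data of $\ol{A}$ (which is what Theorem \ref{main2} and the cycle-index machinery of Stong and Kung control, and which does give the asymptotic independence across distinct $\ol{P}_{j}$ at the residue-field level) determines $\coker(P_{j}(A))$ only modulo $\mf{m}$, not as an $R$-module. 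You correctly flag this as ``the main obstacle,'' but an identified obstacle is not a proof: the lifting/equidistribution step over $S_{j}$ is the entire mathematical content of the conjecture beyond what the paper already proves.
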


\

\hspace{3mm} Note that the limiting distribution $n \ra \infty$ given by Proposition \ref{FW} is a special case of Conjecture \ref{conj}. More cases of Conjecture \ref{conj} are proven as Theorem \ref{main2x} and Theorem \ref{main3x}. Our main theorems are Theorem \ref{main1x}, Theorem \ref{main2x}, and Theorem \ref{main3x}.

\

\begin{thmx}\label{main1x} Let $(R, \mf{m})$ be a complete DVR such that $R/\mf{m} = \bF_{q}$ and $P(t) \in R[t]$ a monic polynomial such that the reduction modulo $\mf{m}$ gives an irreducible polynomial $\ol{P}(t) \in \bF_{q}[t]$. We have

$$\Prob_{A \in \Mat_{n}(R)}(
\coker(P(A)) = 0) = b_{n}(\deg(P)) \prod_{i=1}^{n}(1 - q^{-i}),$$

\

where $b_{n}(d)$, for $d \in \bZ_{\geq 0}$, are given by

$$\sum_{n=0}^{\infty}b_{n}(d) u^{n} = \prod_{i=1}^{\infty}\frac{1 - (q^{-i}u)^{d}}{1 - q^{1-i}u} \in \bC \llb u \rrb.$$

\

Moreover, we have

$$\lim_{n \ra \infty}b_{n}(d) = \prod_{i=1}^{\infty}\frac{1 - q^{-id}}{1 - q^{-i}},$$

\

so in particular, we have

$$\lim_{n \ra \infty}\Prob_{A \in \Mat_{n}(R)}(
\coker(P(A)) = 0) = \prod_{i=1}^{\infty}(1 - q^{-i\deg(P)}).$$
\end{thmx}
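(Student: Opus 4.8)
The plan is to push the problem from $\Mat_{n}(R)$ down to $\Mat_{n}(\bF_{q})$, perform an exact count of matrices over $\bF_{q}$ organized by conjugacy class to get the finite-$n$ formula, and then extract the limit from the generating function. \emph{Step 1: reduction to the residue field.} Because $R^{n}$ is finitely generated, the endomorphism $P(A)$ of $R^{n}$ has trivial cokernel if and only if it is an automorphism, i.e. $P(A) \in \GL_{n}(R)$; and since $R$ is local with residue field $\bF_{q}$, this holds if and only if the reduction $\ol{P}(\ol{A}) = \ol{P(A)}$ lies in $\GL_{n}(\bF_{q})$, i.e. $\coker(\ol{P}(\ol{A})) = 0$. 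The reduction homomorphism $\Mat_{n}(R) \to \Mat_{n}(\bF_{q})$ carries the Haar measure to the uniform measure, its fibers being the cosets of the open subgroup $\Mat_{n}(\mf{m})$ of index $q^{n^{2}}$. Hence $\Prob_{A \in \Mat_{n}(R)}(\coker(P(A)) = 0) = \Prob_{\ol{A} \in \Mat_{n}(\bF_{q})}(\ol{P}(\ol{A}) \in \GL_{n}(\bF_{q}))$, and since $\ol{P}$ is irreducible, $\ol{P}(\ol{A})$ is invertible exactly when $\ol{P} \nmid \chi_{\ol{A}}$, where $\chi_{\ol{A}}$ is the characteristic polynomial of $\ol{A}$.

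\emph{Step 2: counting over $\bF_{q}$ and the generating function.} Viewing $\bF_{q}^{n}$ as an $\bF_{q}[t]$-module via $\ol{A}$, it decomposes as $\bigoplus_{\phi} M_{\phi}$ over the monic irreducibles $\phi \in \bF_{q}[t]$, where $M_{\phi}$ is a finite-length module over the discrete valuation ring $\bF_{q}[t]_{(\phi)}$ (with residue field $\bF_{q^{\deg\phi}}$) of some type $\mu_{\phi}$, and $\sum_{\phi} \deg(\phi)|\mu_{\phi}| = n$. Two matrices are $\GL_{n}(\bF_{q})$-conjugate iff they share the data $(\mu_{\phi})_{\phi}$; the centralizer of $\ol{A}$ in $\GL_{n}(\bF_{q})$ is $\prod_{\phi} \Aut_{\bF_{q}[t]_{(\phi)}}(M_{\phi})$, of order $\prod_{\phi} w(q^{\deg\phi}, \mu_{\phi})$ by Macdonald's formula. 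Summing orbit sizes over the classes with $\mu_{\ol{P}} = \emptyset$ (equivalently $\ol{P} \nmid \chi_{\ol{A}}$) and using $|\GL_{n}(\bF_{q})| = q^{n^{2}} \prod_{i=1}^{n}(1 - q^{-i})$ identifies the probability of Step 1 with $\left(\prod_{i=1}^{n}(1 - q^{-i})\right) b_{n}(\deg P)$, where $b_{n}(d)$ is the sum of $\prod_{\phi} w(q^{\deg\phi},\mu_{\phi})^{-1}$ over all data $(\mu_{\phi})$ with $\mu_{\ol{P}}$ empty and total size $n$. Passing to generating functions, $\sum_{n} b_{n}(d) u^{n} = \prod_{\phi \neq \ol{P}} (\sum_{\mu} u^{\deg(\phi)|\mu|}/w(q^{\deg\phi}, \mu))$; applying $\sum_{\mu} v^{|\mu|}/w(Q,\mu) = \prod_{i \geq 1}(1 - Q^{-i}v)^{-1}$ (Lemma \ref{key} with $y = 1$) to each factor, together with the fact that the same product over \emph{all} monic irreducibles equals $\sum_{n} u^{n}/\prod_{i=1}^{n}(1 - q^{-i}) = \prod_{i \geq 1}(1 - q^{1-i}u)^{-1}$ (Euler's identity, the middle equality holding because $\sum_{(\mu_{\phi})} \prod_{\phi} w^{-1}$ over all data of total size $n$ equals $q^{n^{2}}/|\GL_{n}(\bF_{q})|$), and dividing out the $\phi = \ol{P}$ factor, we get $\sum_{n} b_{n}(d) u^{n} = \prod_{i \geq 1} \frac{1 - (q^{-i}u)^{d}}{1 - q^{1-i}u}$, the asserted formula.

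\emph{Step 3: the limit.} Write this generating function as $g(u)/(1-u)$ with $g(u) = \prod_{i \geq 1}(1 - (q^{-i}u)^{d}) \cdot \prod_{i \geq 2}(1 - q^{1-i}u)^{-1}$, which is holomorphic on the disk $|u| < q$; expanding $g(u) = \sum_{m} c_{m} u^{m}$ (so $\sum_{m}|c_{m}| < \infty$ as $q > 1$) gives $b_{n}(d) = \sum_{m=0}^{n} c_{m} \to g(1) = \prod_{i \geq 1}(1 - q^{-id})/\prod_{i \geq 1}(1 - q^{-i})$, and multiplying by $\prod_{i=1}^{n}(1-q^{-i}) \to \prod_{i \geq 1}(1 - q^{-i})$ yields $\lim_{n} \Prob_{A \in \Mat_{n}(R)}(\coker(P(A)) = 0) = \prod_{i \geq 1}(1 - q^{-i\deg P})$.

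\emph{Main obstacle.} Steps 1 and 3 are short; the substance lies in Step 2 — identifying the centralizer with the product of module-automorphism groups, invoking Macdonald's enumeration, and then manipulating the formal infinite products (convergence as power series in $u$, and the division by the $\ol{P}$-factor) cleanly using Lemma \ref{key} and Euler's identity. I expect that last piece of bookkeeping to be where the care is needed.
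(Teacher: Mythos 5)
Your proposal is correct and follows essentially the same route as the paper: the reduction to $\bF_{q}$ is the $N=0$ case of Lemma \ref{Haar} (Nakayama), your Step 2 is exactly the cycle-index computation via Lemma \ref{Sto}, Lemma \ref{key}, and Macdonald's centralizer formula (your ``Euler identity'' for the product over all monic irreducibles is precisely the $\bs{x}=1$ specialization of Stong's lemma combined with Lemma \ref{key} at $P=t$, which the paper packages as Lemma \ref{Kun}), and your Step 3 is the same holomorphic-function argument as Section \ref{limit}. No gaps.
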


\

\begin{rmk} It will turn out that $b_{n}(d)$ given above are positive rational numbers explicitly given as

$$b_{n}(d) = \frac{|\{\ol{A} \in \Mat_{n}(\bF_{q}) : \coker(\ol{P}(\ol{A})) = 0\}|}{|\GL_{n}(\bF_{q})|},$$

\

for any degree $d$ monic irreducible polynomial $\ol{P}(t) \in \bF_{q}[t]$. This will appear in the proof of Theorem \ref{main1}, which is a step to prove Theorem \ref{main1x}. To check why $b_{n}(d)$ ought to be given this way, apply Lemma \ref{Haar} with $N = 0$ and $r = 1$ to the statement of Theorem \ref{main1x}.
\end{rmk}

\

\begin{thmx}\label{main2x}  Let $(R, \mf{m})$ be a complete DVR such that $R/\mf{m} = \bF_{q}$ and $P_{1}(t), \dots, P_{r}(t) \in R[t]$ monic polynomials such that the reduction modulo $\mf{m}$ gives distinct irreducible polynomials $\ol{P}_{1}(t), \dots, \ol{P}_{r}(t) \in \bF_{q}[t]$. We have

$$\lim_{n \ra \infty}\Prob_{A \in \Mat_{n}(R)}\left(\begin{array}{c}
\coker(P_{j}(A)) = 0 \\
\text{ for } 1 \leq j \leq r
\end{array}\right) = \prod_{j=1}^{r} \prod_{i=1}^{\infty}(1 - q^{-i\deg(P_{j})}).$$
\end{thmx}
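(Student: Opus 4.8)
The plan is to reduce the joint statement about $\coker(P_j(A)) = 0$ for all $j$ to the single-polynomial statement of Theorem \ref{main1x} by a combination of two ingredients: a translation to $\Mat_n(\bF_q)$ (which is the combinatorial heart of the paper) and an asymptotic independence argument as $n \to \infty$. First I would use the lemma (referred to as Lemma \ref{Haar} in the remark after Theorem \ref{main1x}) that reduces Haar-random questions about $P_j(A)$ over $R$ to uniform-random questions over $\Mat_n(\bF_q)$ about the $\ol{P}_j$-parts of $\ol{A}$. Concretely, $\coker(P_j(A)) = 0$ holds if and only if $P_j(A)$ is invertible over $R$, which (since $R$ is a complete DVR) happens if and only if $\ol{P}_j(\ol{A})$ is invertible over $\bF_q$, i.e. if and only if the $\ol{P}_j$-primary part of the $\bF_q[t]$-module $\bF_q^n$ (with $t$ acting as $\ol{A}$) vanishes. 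So the probability on the left is exactly
$$\Prob_{\ol{A} \in \Mat_n(\bF_q)}\left(\ol{P}_j \nmid \chi_{\ol{A}} \text{ for all } 1 \leq j \leq r\right),$$
where $\chi_{\ol{A}}$ denotes the characteristic polynomial; equivalently, the fraction of $\ol{A} \in \Mat_n(\bF_q)$ whose rational canonical form avoids all the primes $\ol{P}_1, \dots, \ol{P}_r$.

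Next I would invoke the cycle-index / generating-function description of conjugacy classes in $\Mat_n(\bF_q)$ (the Kung–Stong type factorization over monic irreducibles), under which the count of matrices with prescribed primary-part data factors as a product over the distinct irreducible polynomials dividing the characteristic polynomial. Since $\ol{P}_1, \dots, \ol{P}_r$ are \emph{distinct} irreducibles, the events ``$\ol{P}_j \nmid \chi_{\ol{A}}$'' become asymptotically independent as $n \to \infty$: the generating function $\sum_n |\{\ol{A} \in \Mat_n(\bF_q) : \ol{P}_j \nmid \chi_{\ol{A}} \ \forall j\}| u^n / |\GL_n(\bF_q)|$ factors (up to the $\GL_n$ normalization) into the product of the corresponding generating functions for each $j$ individually, because the local factor at each $\ol{P}_j$ is being forced to be trivial while the factors at all other irreducibles are unconstrained. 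Taking $u \to 1$ (Hall–Littlewood / Cohen–Lenstra style evaluation, exactly as in the proof of Theorem \ref{main1x}) and using that $\lim_{n\to\infty} b_n(\deg P_j) \prod_{i=1}^n (1 - q^{-i}) = \prod_{i=1}^\infty (1 - q^{-i\deg(P_j)})$ from Theorem \ref{main1x}, the limit of the joint probability becomes the product $\prod_{j=1}^r \prod_{i=1}^\infty (1 - q^{-i\deg(P_j)})$, as claimed.

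The main obstacle I anticipate is making the ``asymptotic independence'' rigorous rather than merely heuristic: one must control the interaction between the $r$ local conditions and the unconstrained part of the characteristic polynomial uniformly in $n$, and justify the interchange of the $n \to \infty$ limit with the factorization of the (infinite) Euler-type products. The cleanest route is probably to set up a single generating function in $u$ whose coefficients are the joint counts divided by $|\GL_n(\bF_q)|$, show it equals $\prod_{j=1}^r \left(\prod_{i=1}^\infty \frac{1 - (q^{-i}u)^{\deg P_j}}{1 - q^{1-i}u}\right) \cdot (\text{correction})$ by the cycle-index factorization, and then extract the $u \to 1^-$ behavior via an Abelian/Tauberian argument or via the explicit product formula already used for $b_n(d)$ in Theorem \ref{main1x}. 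An alternative, which may be what the authors do, is to prove a finite-$n$ identity expressing the joint count as a manageable convolution of the single-polynomial counts and pass to the limit termwise; the distinctness of the $\ol{P}_j$ is what guarantees the cross terms are negligible. Either way, the substantive input beyond bookkeeping is the multiplicativity of the $\Mat_n(\bF_q)$ cycle index over distinct irreducibles together with the limit computation already established in Theorem \ref{main1x}.
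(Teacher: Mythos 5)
Your proposal follows essentially the same route as the paper: reduce to $\Mat_{n}(\bF_{q})$ via Lemma \ref{Haar} with $N=0$ (noting $\coker(P_{j}(A))=0$ iff $\ol{A}[\ol{P}_{j}^{\infty}]=0$), then apply the Kung--Stong cycle-index factorization to write the joint count's generating function as the Euler product over $|\bA^{1}_{\bF_{q}}|$ with the local factors at $\ol{P}_{1},\dots,\ol{P}_{r}$ set to $1$, and finally extract the $n\to\infty$ limit of the coefficients by the same radius-of-convergence argument used for $b_{n}(d)$ in Theorem \ref{main1x}. The only caveat is that the joint generating function is $\prod_{i=1}^{\infty}\frac{\prod_{j=1}^{r}(1-(q^{-i}u)^{\deg P_{j}})}{1-q^{1-i}u}$ rather than a literal product of the $r$ single-polynomial generating functions (your ``correction'' factor is $\hat{\bs{Z}}(\bA^{1}_{\bF_{q}},u)^{1-r}$), but since you flag this and the limit computation goes through identically, the argument is sound.
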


\

\hspace{3mm} That is, Theorem \ref{main2x} generalizes the limiting result in Theorem \ref{main1x} by saying that for $1 \leq i < j \leq r$, the event that $\coker(P_{i}(A))$ vanishes is asymptotically independent to the event that $\coker(P_{j}(A))$ vanishes. This is surprising because specifying $P_{i}(A)$ and $P_{j}(A)$ are dependent (e.g., take $\deg(P_{i}) = \deg(P_{j}) = 1$), but somehow taking cokernels introduce independence. Our last theorem, introduced in the introduction for the specific case $R = \bZ_{p}$, has a similar feature (and so does Conjecture \ref{conj}).

\

\begin{thmx}\label{main3x} Let $(R, \mf{m})$ be a complete DVR such that $R/\mf{m} = \bF_{q}$ and $P_{1}(t), \dots, P_{r}(t) \in R[t]$ monic polynomials such that the reduction modulo $\mf{m}$ gives distinct irreducible polynomials $\ol{P}_{1}(t), \dots, \ol{P}_{r}(t) \in \bF_{q}[t]$. Suppose that $r \geq 1$ and $\deg(P_{r}) = 1$. Given any $R$-module $H$ of finite length, we have

\begin{align*}
\lim_{n \ra \infty}\Prob_{A \in \Mat_{n}(R)}\left(\begin{array}{c}
\coker(P_{1}(A)) = \cdots = \coker(P_{r-1}(A)) = 0 \\
\text{and } \coker(P_{r}(A)) \simeq H
\end{array}\right) = \frac{1}{|\Aut_{R}(H)|} \prod_{j=1}^{r} \prod_{i=1}^{\infty}(1 - q^{-i\deg(P_{j})}).
\end{align*}
\end{thmx}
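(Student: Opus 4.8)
The plan is to combine Theorem \ref{main2x} with Proposition \ref{FW} by a conditioning argument, separating the ``vanishing'' factors $P_{1}, \dots, P_{r-1}$ from the linear factor $P_{r}$. Since $\deg(P_{r}) = 1$, after a translation $A \mapsto A - cI_{n}$ (where $P_{r}(t) = t - c$, $c \in R$) we may assume $P_{r}(t) = t$, so that $\coker(P_{r}(A)) = \coker(A)$; note this translation is measure-preserving for the Haar measure on $\Mat_{n}(R)$, and it changes each $P_{j}$ ($j < r$) to another monic polynomial whose reduction mod $\mf{m}$ is irreducible and still distinct from the others (including $\ol{t}$, since $\ol{P}_r(t)=t$ was distinct from $\ol{P}_j$). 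The key structural input is that, over the complete DVR $R$, the module $M_A := R^n$ viewed as an $R[t]$-module via $A$ decomposes (after completing / localizing at the primes $(\mf{m}, \ol{P}_j(t))$) into ``generalized eigenspace'' pieces attached to the distinct irreducible factors; concretely, $\coker(P_j(A))$ depends only on the $\ol{P}_j$-primary part of this module. The events ``$\coker(P_j(A)) = 0$ for $j<r$'' and ``$\coker(A) \simeq H$'' concern disjoint primary parts, which is the source of the asymptotic independence.

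Here is the sequence of steps I would carry out. First, I would reduce, as above, to $P_r(t) = t$. Second, I would establish the finite-$n$ identity
$$\Prob_{A \in \Mat_{n}(R)}\!\left(\begin{array}{c} \coker(P_j(A)) = 0,\ 1\le j\le r-1 \\ \coker(A) \simeq H \end{array}\right) = \Prob\big(\coker(A)\simeq H\big)\cdot \Prob\big(\coker(P_j(A))=0,\ 1\le j\le r-1 \,\big|\, \coker(A)\simeq H\big),$$
and then argue that, conditionally on $\coker(A) \simeq H$, the distribution of the $\ol{P}_j$-primary behavior of $A$ is asymptotically (as $n\to\infty$) the same as the unconditioned one. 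The cleanest way to see this: use the combinatorial dictionary in the paper (the ``concrete combinatorial connections between $\Mat_n(R)$ and $\Mat_n(\bF_q)$'') to pass to $\ol{A} \in \Mat_n(\bF_q)$, where the rational canonical form splits the count as a product over irreducible polynomials of $\bF_q[t]$ of independent local factors; the factor at $\ol{t}$ governs $\coker(A)\simeq H$ via Proposition \ref{FW}, and the factors at $\ol{P}_1,\dots,\ol{P}_{r-1}$ govern the vanishing conditions via (the proof of) Theorem \ref{main2x}. Third, I would take $n\to\infty$: Proposition \ref{FW} supplies $\tfrac{1}{|\Aut_R(H)|}\prod_{i\ge 1}(1-q^{-i})$, Theorem \ref{main2x} applied to $P_1,\dots,P_{r-1}$ supplies $\prod_{j=1}^{r-1}\prod_{i\ge1}(1-q^{-i\deg P_j})$, and multiplying these gives exactly $\tfrac{1}{|\Aut_R(H)|}\prod_{j=1}^r\prod_{i\ge1}(1-q^{-i\deg P_j})$ since the $j=r$ factor is $\prod_{i\ge1}(1-q^{-i})$.

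The main obstacle is making the ``asymptotic independence / conditional equidistribution'' step rigorous rather than heuristic: one must actually prove that the local factor at the prime $\ol{t}$ (or $\ol{P}_r$) and the local factors at $\ol{P}_1,\dots,\ol{P}_{r-1}$ decouple in the $n\to\infty$ limit even though for fixed $n$ the constraint that all the generalized eigenspace dimensions sum to $n$ couples them. I expect this is handled exactly as in the proof of Theorem \ref{main2x} — via a generating-function computation in $\bC\llbracket u\rrbracket$ (as in Theorem \ref{main1x}, with the $b_n(d)$'s), where one writes the joint probability as a coefficient extraction from a product of Euler-type factors, one per irreducible, and shows the relevant product of generating functions factors in the limit. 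So concretely I would: (i) write the generating function $\sum_n \big(\text{joint count}\big)u^n / |\GL_n(\bF_q)|$ as a product, isolating the $\ol{P}_r=\ol t$ factor (which produces the Proposition \ref{FW} mass for $H$) from the $\ol{P}_j$, $j<r$, factors (which produce the Theorem \ref{main2x} masses); (ii) extract the $n\to\infty$ limit of the coefficients using the same Abelian/continuity argument already used for $b_n(d)$; (iii) multiply. The only genuinely new bookkeeping relative to Theorems \ref{main1x}–\ref{main2x} is carrying the module $H$ through the $\ol t$-local factor, which is precisely the content of Proposition \ref{FW}, so no essentially new idea is required beyond assembling the pieces.
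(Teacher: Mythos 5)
Your high-level skeleton---decouple the vanishing primes $P_{1},\dots,P_{r-1}$ from the linear prime $P_{r}$, feed the former into Theorem \ref{main2x}/\ref{main2} and the latter into a Friedman--Washington-type input, then multiply in the limit---matches the paper, which indeed deduces Theorem \ref{main3x} from Theorem \ref{main2}. But there is a genuine gap at the step where you pass to $\ol{A} \in \Mat_{n}(\bF_{q})$ and claim that the local factor at $\ol{t}$ ``produces the Proposition \ref{FW} mass for $H$,'' equivalently that conditionally on $\coker(A) \simeq H$ the behavior at the other primes is asymptotically unconditioned. The event $\coker(P_{r}(A)) \simeq H$ is \emph{not} determined by the reduction of $A$ modulo $\mf{m}$ (only the corank $\dim_{\bF_{q}}\coker(P_{r}(\ol{A}))$ is), and $|\Aut_{R}(H)|$ is not an $\bF_{q}[t]$-automorphism count; so the cycle-index/rational-canonical-form factorization over $\bF_{q}$, which only sees the $\bF_{q}[t]$-module structure of $\ol{A}$, cannot carry the $R$-module $H$ through the $\ol{t}$-local factor as your step (i) asserts. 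The paper bridges exactly this point with Lemma \ref{count} (Friedman--Washington): for $\deg(P_{r}) = 1$ and $\mf{m}^{N}H = 0$, every $\ol{A}$ with $\dim_{\bF_{q}}\coker(P_{r}(\ol{A})) = l_{H}$ has the same number $q^{Nn^{2}+l_{H}^{2}}|\Aut_{R}(H)|^{-1}\prod_{i=1}^{l_{H}}(1-q^{-i})^{2}$ of lifts in $\Mat_{n}(R/\mf{m}^{N+1})$ with $\coker(P_{r}(A)) \simeq H$, and all other $\ol{A}$ have none. This is precisely your ``conditional equidistribution'' claim, asserted but not proved in your proposal, and it is also exactly where the hypothesis $\deg(P_{r}) = 1$ is used (the lemma fails in higher degree); in your write-up that hypothesis only enters through the cosmetic translation to $P_{r}(t) = t$, which signals that the essential use is missing.

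A second omission: after this conversion, the $\bF_{q}$-side event is ``$\ol{A}[\ol{P}_{j}^{\infty}] = 0$ for $j < r$ and corank $l_{H}$ at $\ol{P}_{r}$,'' an infinite union over all finite $\ol{P}_{r}$-torsion $\bF_{q}[t]$-modules $H'$ with $l_{H'} = l_{H}$. To take $n \ra \infty$ you need not only Theorem \ref{main2} but also the Cohen--Lenstra mass of the corank-$l_{H}$ stratum, i.e.\ Lemma \ref{CL2}, which supplies $q^{-l_{H}^{2}}\prod_{i=1}^{\infty}(1-q^{-i})/\prod_{i=1}^{l_{H}}(1-q^{-i})^{2}$; multiplied against the lift-count above this collapses to $|\Aut_{R}(H)|^{-1}\prod_{i=1}^{\infty}(1-q^{-i})$ and yields the stated product. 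Your outline never mentions this aggregation over modules of fixed corank (nor the limit-versus-countable-sum bookkeeping it entails), so as written the proposal stops short of a proof even granting the generating-function machinery of Theorems \ref{main1x}--\ref{main2x}.
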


\

\hspace{3mm} Note that Theorem \ref{main3x} generalizes the limiting distribution given in Proposition \ref{FW}, a result of Friedman and Washington. Theorem \ref{main3x} also generalizes another result of the same authors ((9) on p.234 in \cite{FW}), as we mentioned in the introduction.

\

\begin{cor}[Friedman and Washington]\label{FW2} Let $(R, \mf{m})$ be any complete DVR with $R/\mf{m} = \bF_{q}$ and $H$ any $R$-module of finite length. We have

$$\lim_{n \ra \infty}\Prob_{A \in \GL_{n}(R)}(\coker(A - I_{n}) \simeq H) = \frac{1}{|\Aut_{R}(H)|}\prod_{i=1}^{\infty}(1 - q^{-i}).$$
\end{cor}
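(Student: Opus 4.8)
\textbf{Proof proposal for Corollary \ref{FW2}.} The plan is to derive this as a direct specialization of Theorem \ref{main3x}. First I would observe that the polynomial $P(t) = t - 1 \in R[t]$ is monic of degree $1$, and its reduction modulo $\mf{m}$ is $\ol{t} - \ol{1} \in \bF_{q}[t]$, which is irreducible (every degree-one polynomial is). So Theorem \ref{main3x} applies with $r = 1$ and $P_{1}(t) = P_{r}(t) = t - 1$; there are no polynomials $P_{1}, \dots, P_{r-1}$, so the vanishing conditions $\coker(P_{1}(A)) = \cdots = \coker(P_{r-1}(A)) = 0$ are vacuous. This gives
\begin{align*}
\lim_{n \ra \infty}\Prob_{A \in \Mat_{n}(R)}(\coker(A - I_{n}) \simeq H) = \frac{1}{|\Aut_{R}(H)|}\prod_{i=1}^{\infty}(1 - q^{-i}).
\end{align*}

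The remaining gap is that the corollary asks for the probability over $A \in \GL_{n}(R)$ (with the normalized restricted Haar measure), not over $A \in \Mat_{n}(R)$. To bridge this, I would use the fact that $A - I_{n}$ is invertible modulo $\mf{m}$ precisely when $\ol{A} - I_{n} \in \GL_{n}(\bF_{q})$, i.e. when $1$ is not an eigenvalue of $\ol{A}$; but this is unrelated to whether $A$ itself lies in $\GL_{n}(R)$. Instead, the key point is that conditioning on $A \in \GL_{n}(R)$ versus working on all of $\Mat_{n}(R)$ does not affect the limiting distribution of $\coker(A - I_{n})$. One clean way: $\Mat_{n}(R) = \bigsqcup_{\ol{B} \in \Mat_{n}(\bF_{q})} (\ol{B} + \mf{m}\Mat_{n}(R))$, and $\Prob_{A \in \Mat_n(R)}(\coker(A-I_n) \simeq H)$ decomposes as a weighted average over the residues $\ol{B}$ of conditional probabilities. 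Since $t-1$ reduces to an irreducible polynomial, only the residues $\ol{B}$ for which $\coker(\ol{B} - I_n)$ is controlled matter; in particular $\GL_{n}(R)$ is the union of those cosets $\ol{B} + \mf{m}\Mat_n(R)$ with $\ol{B} \in \GL_n(\bF_q)$, and the measure of $\GL_n(R)$ inside $\Mat_n(R)$ is $\prod_{i=1}^{n}(1 - q^{-i})$, which tends to $\prod_{i=1}^{\infty}(1 - q^{-i}) > 0$. One then checks that the event $\{\coker(A - I_n) \simeq H\}$ with $H$ a nonzero $\mf{m}$-power torsion module already forces $\ol{A} - I_n$ to be \emph{non}-invertible, so it is disjoint from $\GL_n(R)$ in the relevant cosets — which would make the restricted probability zero, contradicting the claim. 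So this naive decomposition is not quite the right bookkeeping.

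The correct mechanism, and the one I would actually carry out, is the one implicit in Friedman--Washington and in the structure of this paper: pass to $\Mat_{n}(\bF_q)$. For $\ol{A} \in \Mat_n(\bF_q)$, having $1$ as an eigenvalue is the condition $\coker(\ol{A} - I_n) \neq 0$, and the $(t-1)$-primary part of $\ol{A}$ is exactly $\coker((\ol{A}-I_n)^{N})$ for $N \gg 0$. The number of $\ol{A} \in \Mat_n(\bF_q)$ with a prescribed $(t-1)$-part does not change if we further impose that $\ol{A}$ is invertible, because the remaining Jordan-type data at eigenvalue $0$ varies independently. Meanwhile $\GL_n(R)$ is the preimage under reduction of $\GL_n(\bF_q)$, so $\Prob_{A \in \GL_n(R)}(\coker(A - I_n) \simeq H)$ equals the corresponding conditional probability on $\Mat_n(R)$ conditioned on $\ol{A} \in \GL_n(\bF_q)$. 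Using the combinatorial dictionary between $\Mat_n(R)$ and $\Mat_n(\bF_q)$ developed in the paper (the same one feeding the proof of Theorem \ref{main3x}), one reduces to: conditioning $\ol{A} \in \Mat_n(\bF_q)$ on being invertible, while specifying its $(t-1)$-part to be $H$, is asymptotically the same as specifying only the $(t-1)$-part. The main obstacle is precisely establishing this asymptotic independence of the ``invertibility at $0$'' event from the ``$(t-1)$-part equals $H$'' event as $n \to \infty$ — but this should follow from the generating-function estimates already in hand (of the type in Theorem \ref{main1x} and Lemma \ref{key}), since $\lim_{n}\Prob_{\ol A \in \Mat_n(\bF_q)}(\ol A \text{ invertible}) = \prod_{i=1}^{\infty}(1 - q^{-i})$ is a nonzero constant not depending on $H$, so dividing by it changes nothing in the limit. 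I would then conclude that the limit over $\GL_n(R)$ agrees with the limit over $\Mat_n(R)$ computed above, giving the stated formula $|\Aut_R(H)|^{-1}\prod_{i=1}^{\infty}(1 - q^{-i})$.
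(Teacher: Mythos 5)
Your first step is fine: Theorem \ref{main3x} with $r=1$, $P_{1}(t)=t-1$ does give $\lim_{n\ra\infty}\Prob_{A\in\Mat_{n}(R)}(\coker(A-I_{n})\simeq H)=|\Aut_{R}(H)|^{-1}\prod_{i=1}^{\infty}(1-q^{-i})$, and it is also true that the normalized Haar measure on $\GL_{n}(R)$ is the conditional measure on $\Mat_{n}(R)$ given the event $\{\coker(A)=0\}$ (equivalently $\ol{A}\in\GL_{n}(\bF_{q})$), whose probability is $\prod_{i=1}^{n}(1-q^{-i})$. But the passage from $\Mat_{n}(R)$ to $\GL_{n}(R)$ is exactly where your argument has a genuine gap. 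You need
\[
\Prob_{A\in\GL_{n}(R)}(\coker(A-I_{n})\simeq H)=\frac{\Prob_{A\in\Mat_{n}(R)}\bigl(\coker(A)=0 \text{ and } \coker(A-I_{n})\simeq H\bigr)}{\prod_{i=1}^{n}(1-q^{-i})},
\]
so the whole problem is to evaluate the \emph{joint} probability in the numerator. The observation that the denominator tends to the nonzero constant $\prod_{i=1}^{\infty}(1-q^{-i})$ ``not depending on $H$'' proves nothing: dividing by that constant changes the answer unless the joint probability asymptotically factors, and this factorization is precisely the asymptotic independence that you yourself identify as ``the main obstacle'' and then do not establish. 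The estimates you cite (Theorem \ref{main1x}, Lemma \ref{key}) control each event separately, not the joint event. Two of your intermediate assertions are also simply false: the event $\coker(A-I_{n})\simeq H\neq 0$ is not disjoint from $\GL_{n}(R)$ (invertibility of $\ol{A}$ has nothing to do with invertibility of $\ol{A}-I_{n}$), and the number of $\ol{A}\in\Mat_{n}(\bF_{q})$ with prescribed $(t-1)$-part does change when you impose invertibility for finite $n$ (it shrinks by roughly the factor $\prod_{i}(1-q^{-i})$).

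The gap closes at once if you use the theorem you already invoked, but with $r=2$: take $P_{1}(t)=t$ and $P_{2}(t)=t-1$ in Theorem \ref{main3x}, which is the paper's own proof. Since $\coker(A)=0$ is exactly the condition $A\in\GL_{n}(R)$, that theorem says the numerator above tends to $|\Aut_{R}(H)|^{-1}\prod_{i=1}^{\infty}(1-q^{-i})^{2}$; this is the required asymptotic independence of ``$A$ invertible'' and ``$\coker(A-I_{n})\simeq H$'' (in the paper it ultimately comes from Theorem \ref{main2} combined with Friedman--Washington's counting Lemma \ref{count}, via Lemma \ref{Haar}). Dividing by $\prod_{i=1}^{\infty}(1-q^{-i})$ then yields the stated formula. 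So your route is salvageable, but only by importing exactly the $r=2$ joint statement; as written, the key step is asserted with an argument that does not support it.
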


\begin{proof} Choose any $N \geq 1$ such that $\mf{m}^{N}H = 0$. Since

$$\frac{|\GL_{n}(R/\mf{m}^{N+1})|}{|\Mat_{n}(R/\mf{m}^{N+1})|} = \frac{|\GL_{n}(\bF_{q})|}{|\Mat_{n}(\bF_{q})|} = \prod_{i=1}^{n}(1 - q^{-i}),$$

\

we have

\begin{align*}
\Prob_{\ol{A} \in \Mat_{n}(R/\mf{m}^{N+1})}\left(
\begin{array}{c}
\coker(\ol{A}) = 0, \\
\coker(\ol{A} - I_{n}) \simeq H
\end{array}
\right) &= \frac{|\GL_{n}(R/\mf{m}^{N+1})|}{|\Mat_{n}(R/\mf{m}^{N+1})|}\Prob_{\ol{A} \in \GL_{n}(R/\mf{m}^{N+1})}(\coker(\ol{A} - I_{n}) \simeq H) \\
&= \Prob_{A \in \GL_{n}(R)}(\coker(A - I_{n}) \simeq H)\prod_{i=1}^{n}(1 - q^{-i}),
\end{align*}

\

so applying Lemma \ref{Haar} and Theorem \ref{main3x} with $P_{1}(t) = t$ and $P_{2}(t) = t - 1$ for $r = 2$, we obtain the result by letting $n \ra \infty$.
\end{proof}

\

\begin{rmk} It seems that Theorem \ref{main3x} is new even for the case $R = \bZ_{p}$. Our proof for Theorem \ref{main3x} uses Lemma \ref{count} due to Friedman and Washington, which appears in the original proof of Corollary \ref{FW2}. In fact, our proof will show more generally that given the same hypothesis as in Theorem \ref{main3x}, we have

\begin{align*}
&\Prob_{A \in \Mat_{n}(R)}\left(\begin{array}{c}
\coker(P_{1}(A)) = \cdots = \coker(P_{r-1}(A)) = 0 \\
\text{and } \coker(P_{r}(A)) \simeq H
\end{array}\right) \\
&= \frac{q^{l_{H}^{2}}\prod_{i=1}^{l_{H}}( 1 - q^{-i} )^{2}}{ |\Aut_{R}(H)| } \Prob_{\ol{A} \in \Mat_{n}(\bF_{q})}\left(
\begin{array}{c}
\coker(P_{j}(\ol{A})) = 0 \text{ for } 1 \leq j \leq r-1, \\
\dim_{\bF_{q}}(\coker(P_{r}(\ol{A})) = l_{H}
\end{array}
\right),
\end{align*}

\

where $l_{H} = \dim_{\bF_{q}}(H/\mf{m}H)$. By taking $r = 1$ and $P_{1}(t) = t$ and using the fact that the number of matrices in $\Mat_{n}(\bF_{q})$ with corank $0 \leq l \leq n$ is equal to

$$\frac{q^{n^{2}-l^{2}}\prod_{i=l+1}^{n}( 1 - q^{-i} )^{2}}{\prod_{j=1}^{n-l}( 1 - q^{-j} )},$$

\

we can deduce Proposition \ref{FW} even for all $n \geq 0$, not just $n \ra \infty$. This is not the proof given by Friedman and Washington \cite{FW} (as one can check Proposition 1 in their paper). However, Lemma \ref{count} is from their paper, and it is quite evident that Friedman and Washington were aware of this argument.
\end{rmk}

\

\begin{rmk} Given our discussion, the known cases for Conjecture \ref{conj} to our best knowledge are the following:

\begin{itemize}
	\item any $r \geq 0$ with $H_{1} = \cdots = H_{r} = 0$ (Theorem \ref{main2x});
	\item any $r \geq 1$ with $\deg(P_{r}) = 1$ while $H_{1} = \cdots H_{r-1} = 0$ and any $H_{r}$ (Theorem \ref{main3x}).
\end{itemize}
\end{rmk}

\

\subsection{Random matrices over finite fields}\label{F_q} Among our three theorems, Theorem \ref{main1x} and Theorem \ref{main2x} can be rephrased as statements about $\ol{A} \in \Mat_{n}(\bF_{q})$, chosen uniformly at random. In this section, we will write $A$ instead of $\ol{A}$ for convenience. Theorem \ref{main1x} will be deduced from the following.

\

\begin{thm}\label{main1} Fix any monic irreducible polynomial $P = P(t) \in \bF_{q}[t]$ and a $P^{\infty}$-torsion $\bF_{q}[t]$-module $H$ of finite length. Write $h := \dim_{\bF_{q}}(H)$. Then

\begin{align*}
\Prob_{A \in \Mat_{n}(\bF_{q})}(A[P^{\infty}] \simeq H)
&= \left\{
	\begin{array}{ll}
	\frac{b_{n-h}(\deg(P))}{|\Aut_{\bF_{q}[t]}(H)|}\prod_{i=1}^{n}(1 - q^{-i}) & \mbox{if } n \geq h \text{ and } \\
	0 & \mbox{if } n < h,
	\end{array}\right.
\end{align*}

\

where $b_{n}(d)$, for $d \in \bZ_{\geq 0}$, are given by

$$\sum_{n=0}^{\infty}b_{n}(d) u^{n} = \prod_{i=1}^{\infty}\frac{1 - (q^{-i}u)^{d}}{1 - q^{1-i}u} \in \bC \llb u \rrb.$$

\

Moreover, we have

$$\lim_{n \ra \infty}b_{n}(d) = \prod_{i=1}^{\infty}\frac{1 - q^{-id}}{1 - q^{-i}}$$

\

so that

$$\lim_{n \ra \infty}\Prob_{A \in \Mat_{n}(\bF_{q})}(A[P^{\infty}] \simeq H) = \frac{1}{|\Aut_{\bF_{q}[t]}(H)|}\prod_{i=1}^{\infty}(1 - q^{-i\deg(P)}).$$
\end{thm}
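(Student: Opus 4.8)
The plan is to reduce the statement about the $P^\infty$-torsion part $A[P^\infty]$ of a uniformly random matrix $A \in \Mat_n(\bF_q)$ to a counting problem governed by the rational canonical form, then package the answer using the generating function $\sum b_n(d) u^n$. Recall that for $A \in \Mat_n(\bF_q)$, the $\bF_q[t]$-module structure on $\bF_q^n$ given by $t \cdot v = Av$ decomposes canonically as a direct sum over monic irreducible polynomials $Q$ of its $Q$-primary parts, and $A[P^\infty]$ is exactly the $P$-primary part. So the first step is to write the count of $\{A : A[P^\infty] \simeq H\}$ as a product of two independent factors: the number of ways to build the $P$-primary part isomorphic to $H$ on an $h$-dimensional subspace, and the number of ways to build an arbitrary module structure with \emph{no} $P$-torsion on a complementary $(n-h)$-dimensional space, together with the count of how many ways to split $\bF_q^n$ as such a direct sum.

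Concretely, I would use the standard orbit-counting / centralizer formula: the number of $A \in \Mat_n(\bF_q)$ whose associated module is isomorphic to a fixed module $M$ of dimension $n$ equals $|\GL_n(\bF_q)| / |\Aut_{\bF_q[t]}(M)|$. Writing $M = H \oplus M'$ where $M'$ ranges over all finite $\bF_q[t]$-modules of dimension $n - h$ with $M'[P^\infty] = 0$, and using that $\Aut(H \oplus M') = \Aut(H) \times \Aut(M')$ because $H$ and $M'$ have coprime support, the probability becomes
\begin{align*}
\Prob_{A \in \Mat_n(\bF_q)}(A[P^\infty] \simeq H) = \frac{|\GL_n(\bF_q)|}{q^{n^2}} \cdot \frac{1}{|\Aut_{\bF_q[t]}(H)|} \sum_{M'} \frac{1}{|\Aut_{\bF_q[t]}(M')|},
\end{align*}
where the sum is over isomorphism classes of $\bF_q[t]$-modules $M'$ of dimension $n-h$ with no $P$-part. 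The factor $|\GL_n(\bF_q)|/q^{n^2} = \prod_{i=1}^n (1-q^{-i})$ is already in the desired form, so the key identity to establish is that $\sum_{M'} |\Aut_{\bF_q[t]}(M')|^{-1}$, summed over modules of dimension $n-h$ avoiding $P$, equals $b_{n-h}(\deg P)$.

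For that identity I would set up the generating function $F_P(u) := \sum_{k \geq 0} \big(\sum_{\dim M' = k,\ M'[P^\infty]=0} |\Aut(M')|^{-1}\big) u^k$. Since a finite $\bF_q[t]$-module is a direct sum over irreducibles $Q$ of its $Q$-primary part, and the automorphism group factors as a product, $F_P$ factors as an Euler product $\prod_{Q \neq P} G_{Q}(u)$ over all monic irreducibles $Q \neq P$, where $G_Q(u) = \sum_{\lambda} |\Aut_{\bF_q[t]/Q}\text{-module}|^{-1} u^{\deg(Q)|\lambda|}$ runs over partitions $\lambda$ (the $Q$-primary part is determined by a partition, and its dimension is $\deg(Q)\cdot|\lambda|$). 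By the cycle-index / Macdonald computation — precisely the content of Lemma \ref{key} evaluated appropriately — one has $\sum_\lambda w(Q',\lambda)^{-1} x^{|\lambda|} = \prod_{i\geq 1}(1 - x q^{-i}_{Q'})^{-1}$ where $q_{Q'} = q^{\deg Q}$, wait — I should instead use the known evaluation and the full product $\prod_{\text{all }Q} G_Q(u) = \prod_{i=1}^\infty (1-q^{1-i}u)^{-1}$ (this is the generating function counting all matrices up to conjugacy, i.e.\ $\sum_n u^n |\Mat_n(\bF_q)|/|\GL_n(\bF_q)|$), then divide out the single Euler factor $G_P(u)$. A short computation gives $G_P(u) = \prod_{i \geq 1}(1 - (q^{-i}u)^{\deg P})^{-1}$ up to the relevant normalization, whence
\begin{align*}
F_P(u) = \frac{\prod_{i=1}^\infty (1 - q^{1-i}u)^{-1}}{G_P(u)} = \prod_{i=1}^\infty \frac{1 - (q^{-i}u)^{\deg P}}{1 - q^{1-i}u},
\end{align*}
which is exactly $\sum_n b_n(\deg P) u^n$. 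This proves the main formula for $n \geq h$, and the case $n < h$ is immediate since $\dim H = h$ cannot embed. The limit statements then follow: $\lim_n b_n(d) = \prod_i (1-q^{-id})/(1-q^{-i})$ by a standard Abelian argument on the generating function (the coefficients converge to the value at $u=1$ of the "tail-corrected" series, using that $\prod(1-q^{1-i}u)^{-1}$ has its singularity controlled), and combining with $\prod_{i=1}^n(1-q^{-i}) \to \prod_{i=1}^\infty(1-q^{-i})$ and the telescoping $\big(\prod_{i=1}^\infty (1-q^{-i})\big)\big(\prod_i \frac{1-q^{-id}}{1-q^{-i}}\big) = \prod_i (1-q^{-id})$ yields the stated limit $|\Aut_{\bF_q[t]}(H)|^{-1}\prod_i(1-q^{-i\deg P})$.

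The main obstacle I anticipate is making the Euler-product manipulation rigorous: one must justify convergence and rearrangement of the infinite product over all monic irreducibles $Q$ in $\bC\llbracket u \rrbracket$, correctly track the weighting by $\deg Q$ in each local factor, and verify that the "all $Q$" product really equals $\prod_{i=1}^\infty(1-q^{1-i}u)^{-1}$ — this last is essentially Kung's / Stong's cycle index for $\Mat_n(\bF_q)$, and invoking it cleanly (or re-deriving it via Lemma \ref{key}) is the crux. Everything else — the centralizer formula, the splitting $\Aut(H\oplus M') = \Aut(H)\times\Aut(M')$, and the final limit bookkeeping — is routine.
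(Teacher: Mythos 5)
Your proposal is correct, and its computational core coincides with the paper's: the local factor $G_P(u)=\prod_{i\ge 1}(1-(q^{-i}u)^{\deg P})^{-1}$ is Lemma \ref{key}, the identity $\prod_{Q}G_Q(u)=\prod_{i\ge1}(1-q^{1-i}u)^{-1}$ is exactly the all-variables-equal-to-one specialization of Stong's cycle index (equivalently Kung's lemma times the factor at $t$), and dividing out the Euler factor at $P$ gives $\sum_n b_n(\deg P)u^n$ just as in the paper. Where you differ is the reduction to the case $H=0$: the paper stays inside the cycle index, specializing $x_{P,\nu}$ to kill all nonempty partitions at $P$ except $\lambda(H)$ and then reading off the coefficient identity $(1+|\Aut(H)|^{-1}u^{h})\sum_n b_n u^n$, whereas you peel off $H$ up front by orbit--stabilizer together with the primary decomposition $M\simeq H\oplus M'$ and $\Aut_{\bF_q[t]}(H\oplus M')\simeq\Aut_{\bF_q[t]}(H)\times\Aut_{\bF_q[t]}(M')$. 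That reduction is precisely the content of the paper's Lemma \ref{Bor} (Boreico's formula), which the paper records separately and notes can be used to prove special cases of Theorem \ref{main1}; your version of it, via multiplicativity of automorphism groups over coprime supports rather than Boreico's count of triples $(V,\phi,\psi)$, is arguably cleaner and yields the exact-$n$ formula directly, while the paper's in-index specialization has the advantage of extending with no extra input to the several-prime statement of Theorem \ref{main2}. Two small points to tighten: the product over all monic irreducibles $Q$ converges $u$-adically since each factor is $1+O(u^{\deg Q})$ and there are finitely many $Q$ per degree, but its identification with $\prod_{i\ge1}(1-q^{1-i}u)^{-1}$ and the evaluation at $u=1$ need the analytic argument of the paper's Section \ref{limit} (holomorphy on $|u|<q$); and the limit $\lim_n b_n(d)$ is not an Abelian theorem but the simpler observation that $\sum_n b_n(d)u^n=(1-u)^{-1}g(u)$ with $g(u)=\prod_{i\ge1}\frac{1-(q^{-i}u)^d}{1-q^{-i}u}$ holomorphic on $|u|<q$, so $b_n(d)$ is the $n$-th partial sum of the Taylor coefficients of $g$ and converges to $g(1)$.
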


\

\begin{rmk} Note that given $q, n$, and $H$, the conclusion of Theorem \ref{main1} only depends on $\deg(P)$. A special case where $\deg(P) = 1$ is interesting (i.e., $P(t) = t - a$ for some $a \in \bF_{q}$). Since $b_{n}(1) = 1$ for all $n \in \bZ_{\geq 0}$, Theorem \ref{main1} implies that

\begin{align*}
\Prob_{A \in \Mat_{n}(\bF_{q})}(A[(t-a)^{\infty}] \simeq H)
&= \left\{
	\begin{array}{ll}
	\frac{1}{|\Aut_{\bF_{q}[t]}(H)|}\prod_{i=1}^{n}(1 - q^{-i}) & \mbox{if } n \geq \dim_{\bF_{q}}(H) \text{ and } \\
	0 & \mbox{if } n < \dim_{\bF_{q}}(H).
	\end{array}\right.
\end{align*}
\end{rmk}

\

\hspace{3mm} Likewise, Theorem \ref{main2x} will be deduced from the following.

\

\begin{thm}\label{main2} Fix any distinct monic irreducible polynomials $P_{1}(t), \dots, P_{r}(t) \in \bF_{q}[t]$ and $P_{j}^{\infty}$-torsion module $H_{j}$ of finite length for $1 \leq j \leq r$. Then

\begin{align*}
\lim_{n \ra \infty}\Prob_{A \in \Mat_{n}(\bF_{q})}(A[P_{j}^{\infty}] \simeq H_{j} \text{ for } 1 \leq j \leq r) &=	\prod_{j=1}^{r}\frac{1}{|\Aut_{\bF_{q}[t]}(H_{j})|}\prod_{i=1}^{\infty}(1 - q^{-i\deg(P_{j})}).
\end{align*}
\end{thm}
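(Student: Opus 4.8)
The plan is to reduce the joint statement about several primes to a product of single-prime statements via a structural decomposition of $\Mat_n(\bF_q)$ by characteristic polynomial. First I would recall the rational canonical form: a matrix $A \in \Mat_n(\bF_q)$ makes $\bF_q^n$ into a finitely generated torsion $\bF_q[t]$-module, which decomposes canonically as $\bigoplus_{P} M_P$ where $P$ ranges over monic irreducibles and $M_P = (\bF_q^n)[P^\infty]$ is the $P$-primary part. The key point is that the isomorphism types of $M_{P_1}, \dots, M_{P_r}$ for our fixed finite list of primes are "almost independent" as $A$ varies uniformly, with the interaction mediated only through the constraint on total dimension $\sum_j \dim_{\bF_q} M_{P_j} \le n$. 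The strategy is therefore: for each choice of modules $H_1,\dots,H_r$, count the matrices $A$ whose $P_j$-part is isomorphic to $H_j$ for all $j$, by summing over the complementary part (the direct sum of all primary components for primes \emph{other} than $P_1,\dots,P_r$), and then extract the $n \to \infty$ limit.

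Concretely, the main steps in order would be: (1) Use the generating-function / cycle-index machinery for $\Mat_n(\bF_q)$ (as in Fulman's work, or Stong's) that writes $\sum_n \frac{|\Mat_n(\bF_q)|}{|\GL_n(\bF_q)|} x^n \cdot (\text{data tracking primary parts})$ as an Euler product over monic irreducibles $P$, with the local factor at $P$ being a sum over partitions weighted by $1/|\Aut_{\bF_q[t]}(H_\lambda)|$ times a power of $x$ recording $\deg(P) \cdot |\lambda|$; equivalently, invoke Theorem \ref{main1} directly as the single-prime input. (2) Isolate the factors at $P_1,\dots,P_r$: the joint probability $\Prob_{A}(A[P_j^\infty]\simeq H_j \ \forall j)$ equals $\Big(\prod_j \frac{q^{-\deg(P_j)h_j'}}{|\Aut_{\bF_q[t]}(H_j)|}\Big)$ — where $h_j'$ is some exponent coming from the local factor — times $\frac{1}{|\GL_n(\bF_q)|}$ times the number of matrices of size $n - \sum_j \dim H_j$ whose primary parts avoid $P_1,\dots,P_r$ entirely. (3) Show that the "avoidance" count, divided by $|\GL_{n-\sum h_j}(\bF_q)|$, tends as $n\to\infty$ to a constant $C = \prod_{P \notin \{P_1,\dots,P_r\}} (\text{local factor at } P)$, and that $|\GL_{n-\sum h_j}(\bF_q)| / |\GL_n(\bF_q)| \cdot |\Mat_n|/|\Mat_{n-\sum h_j}|$ behaves as $q^{-\text{something}}$; combine with the known product formula $\prod_{\text{all }P}(\text{local factor}) = 1$ (this is just $\lim \Prob = 1$ for the full decomposition, i.e.\ Fulman's cycle index specialization) to see that $C \cdot \prod_j (\text{local factor at }P_j\text{ summed over all partitions}) = 1$, which lets me rewrite $C$ and collapse everything into $\prod_j \frac{1}{|\Aut_{\bF_q[t]}(H_j)|}\prod_i (1 - q^{-i\deg(P_j)})$.

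Alternatively — and this is probably the cleaner route — I would deduce Theorem \ref{main2} from Theorem \ref{main1} by an inclusion–exclusion / conditioning argument that avoids re-deriving the Euler product: writing $\Prob_A(\bigcap_j \{A[P_j^\infty]\simeq H_j\})$ and comparing it to $\prod_j \Prob_A(A[P_j^\infty]\simeq H_j)$, one shows the ratio tends to $1$ by a second-moment or bounded-dependence estimate, using that the finitely many primes occupy a vanishing fraction of the $n$ "slots" and that Theorem \ref{main1}'s formula factors as (a ratio of $\GL$-orders)$\times$(the combinatorial weight). The hard part will be step (3): controlling the limit of the "avoidance" count and proving the asymptotic independence rigorously rather than heuristically — i.e.\ showing the error term in passing from $\Mat_n$ to the "one prime removed" count is $o(1)$ uniformly. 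This is exactly where the $n\to\infty$ hypothesis is essential (the finite-$n$ statement is false, since dimension constraints correlate the primary parts), and it is where I expect Fulman-style generating-function identities, or a careful use of the explicit formula in Theorem \ref{main1} together with the convergence $b_n(d) \to \prod_i \frac{1-q^{-id}}{1-q^{-i}}$, to do the real work.
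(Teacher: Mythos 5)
Your primary route --- the Stong/Fulman cycle-index Euler product over monic irreducibles, isolating the local factors at $P_1,\dots,P_r$, and extracting the $n\to\infty$ limit of coefficients via the convergence argument underlying $b_n(d)\to\prod_i\frac{1-q^{-id}}{1-q^{-i}}$ --- is essentially the paper's proof (which factors $\hat{\bs{Z}}(\bA^1_{\bF_q},\bs{x},u)$ as $\frac{1}{1-u}$ times ratios of local factors and evaluates at $u=1$). Your step (2), counting matrices with prescribed primary parts via a complementary block of size $n-\sum_j\dim H_j$ avoiding $P_1,\dots,P_r$, is exactly Boreico's reduction (Lemma \ref{Bor}), which the paper records as an alternative but does not need in its own proof.
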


\

\hspace{3mm} As an immediate corollary, we see how random matrices in $\GL_{n}(\bF_{q})$ is related to Cohen-Lenstra distributions as $n \ra \infty$. This is originally due to Fulman in his thesis \cite{Ful97}, but a partial result to this was also observed by Washington prior to Fulman (Theorem 1 (b) in \cite{Was}). Washington's result can be obtained by taking $P(t) = t - 1$ in the following corollary and applying Lemma \ref{CL2}, which is due to Cohen and Lenstra.

\

\begin{cor}[cf. \cite{Ful14}]\label{Ful} Fix any monic irreducible polynomial $P(t) \in \bF_{q}[t] \sm \{t\}$ and a $P^{\infty}$-torsion $\bF_{q}[t]$-module $H$ of finite length. Then

\begin{align*}
\lim_{n \ra \infty}\Prob_{A \in \GL_{n}(\bF_{q})}(A[P^{\infty}] \simeq H) &=	\frac{1}{|\Aut_{\bF_{q}[t]}(H)|}\prod_{i=1}^{\infty}(1 - q^{-i\deg(P)}).
\end{align*}
\end{cor}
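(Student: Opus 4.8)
The plan is to deduce Corollary \ref{Ful} from Theorem \ref{main2} by comparing the uniform distribution on $\GL_n(\bF_q)$ with the uniform distribution on $\Mat_n(\bF_q)$, using the extra polynomial $P(t) = t$ to ``carve out'' the invertible matrices. Concretely, a matrix $A \in \Mat_n(\bF_q)$ is invertible precisely when $A[t^\infty] = 0$, since $A[t^\infty]$ is the generalized $0$-eigenspace of $A$ (equivalently, $A$ is invertible iff $0$ is not an eigenvalue iff the $t$-primary part of the $\bF_q[t]$-module $\bF_q^n$ vanishes). So the event $\{A \in \GL_n(\bF_q)\}$ is the event $\{A[t^\infty] \simeq 0\}$.

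First I would fix a monic irreducible $P(t) \neq t$ and a finite-length $P^\infty$-torsion module $H$, and apply Theorem \ref{main2} with $r = 2$, taking $P_1 = P$, $H_1 = H$, $P_2 = t$, $H_2 = 0$ (these are distinct monic irreducibles since $P \neq t$). This gives
\begin{align*}
\lim_{n \to \infty}\Prob_{A \in \Mat_n(\bF_q)}(A[P^\infty] \simeq H \text{ and } A[t^\infty] = 0) = \frac{1}{|\Aut_{\bF_q[t]}(H)|}\prod_{i=1}^{\infty}(1 - q^{-i\deg(P)}) \prod_{i=1}^{\infty}(1 - q^{-i}),
\end{align*}
using that $|\Aut_{\bF_q[t]}(0)| = 1$ and $\deg(t) = 1$. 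Next I would rewrite the left-hand side by conditioning on invertibility: since $\{A[t^\infty]=0\} = \{A \in \GL_n(\bF_q)\}$, the joint probability equals $\Prob_{A \in \Mat_n(\bF_q)}(A \in \GL_n(\bF_q)) \cdot \Prob_{A \in \GL_n(\bF_q)}(A[P^\infty] \simeq H)$, and the first factor is $|\GL_n(\bF_q)|/|\Mat_n(\bF_q)| = \prod_{i=1}^{n}(1 - q^{-i})$. Taking $n \to \infty$, this factor converges to $\prod_{i=1}^{\infty}(1 - q^{-i})$, which is a nonzero constant, so I can divide both sides by it and conclude
\begin{align*}
\lim_{n \to \infty}\Prob_{A \in \GL_n(\bF_q)}(A[P^\infty] \simeq H) = \frac{1}{|\Aut_{\bF_q[t]}(H)|}\prod_{i=1}^{\infty}(1 - q^{-i\deg(P)}),
\end{align*}
as desired.

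There is no serious obstacle here: the argument is a short limit manipulation once Theorem \ref{main2} is in hand. The only points requiring a line of justification are the identity $\{A \in \GL_n(\bF_q)\} = \{A[t^\infty] = 0\}$ (immediate from the structure theory of $\bF_q[t]$-modules, or from the fact that $t$ acts invertibly on $\bF_q^n$ iff $\det A \neq 0$), the factorization of the joint probability through conditioning (valid because the conditioning event $\GL_n(\bF_q)$ has positive probability and the uniform measure on $\GL_n(\bF_q)$ is exactly the restriction-and-renormalization of the uniform measure on $\Mat_n(\bF_q)$), and the nonvanishing of $\prod_{i=1}^{\infty}(1 - q^{-i})$, which lets us pass the limit through the division. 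I would also remark, as the paper already does, that the $P = t-1$ case recovers Washington's observation after invoking Lemma \ref{CL2}.
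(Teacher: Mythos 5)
Your proposal is correct and is essentially the paper's own argument: the paper likewise applies Theorem \ref{main2} with the pair $\{t, P\}$ and modules $\{0, H\}$, rewrites the $\GL_n$-probability as the $\Mat_n$-probability of the joint event divided by $|\GL_n(\bF_q)|/|\Mat_n(\bF_q)| = \prod_{i=1}^{n}(1-q^{-i})$, and passes to the limit using the nonvanishing of $\prod_{i=1}^{\infty}(1-q^{-i})$. The only difference is cosmetic (you phrase the count ratio as conditioning), so there is nothing to change.
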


\begin{proof} Applying Theorem \ref{main2} by taking $P_{1}(t) = t$ and $P_{2}(t) = P(t)$ with $H_{1} = 0$ and $H_{2} = H$, we get

\begin{align*}
\lim_{n \ra \infty} \Prob_{A \in \GL_{n}(\bF_{q})}(A[P^{\infty}] \simeq H) &= \lim_{n \ra \infty}\frac{|\{A \in \GL_{n}(\bF_{q}) : A[P^{\infty}] \simeq H\}|}{|\Mat_{n}(\bF_{q})|}\frac{|\Mat_{n}(\bF_{q})|}{|\GL_{n}(\bF_{q})|} \\
&= \frac{\lim_{n \ra \infty} \Prob_{A \in \Mat_{n}(\bF_{q})}(A[t^{\infty}] = 0 \text{ and } A[P^{\infty}] \simeq H)}{\prod_{i=1}^{\infty}(1 - q^{-i})} \\
&= \frac{1}{|\Aut_{\bF_{q}[t]}(H)|}\prod_{i=1}^{\infty}\frac{(1 - q^{-i})(1 - q^{-i\deg(P)})}{(1 - q^{-i})} \\
&= \frac{1}{|\Aut_{\bF_{q}[t]}(H)|}\prod_{i=1}^{\infty}(1 - q^{-i\deg(P)}),
\end{align*}

\

as desired.
\end{proof}

\

\begin{rmk} Thanks to Nathan Kaplan, we have noticed that Boreico has independently obtained Theorem \ref{main1} in his thesis (Theorem 3.8.18 in \cite{Bor}) prior to our paper. Boreico's proof is different from ours, but he also sketches our proof and discusses the same corollary (i.e., Corollary \ref{Ful}). We believe that providing our proof for Theorem \ref{main1} is still valuable for clarity and details. To our best knowledge, Boreico's thesis was never published nor made into a preprint, but we recommend the interested reader take a look at his alternative proof of Theorem \ref{main1} (i.e., Theorem 3.8.18 in \cite{Bor}) which uses more direct linear algebraic and measure theoretic arguments. Boreico's proof also inspired us to find many connections between our results over $\bF_{q}$ and random matrices over an arbitrary complete DVR whose residue field at its maximal ideal is $\bF_{q}$. A part of his proof is presented in this paper as Lemma \ref{Bor}. We use this to get Corollary \ref{formula}, which will enable us to see that Theorem \ref{main1} and Theorem \ref{main2} conversely imply Theorem \ref{main1x} and Theorem \ref{main2x} as well.
\end{rmk}

\

\section{Philosophy of Cohen and Lenstra}\label{CLphil}

\subsection*{Notations} Given a ring $R$ and integer $N \geq 1$, we denote by $\Mod_{R}^{\leq N}$ the set of isomorphism classes of $R$-modules whose size is less than equal to $N$ and $\Mod_{R}^{=N}$ the set of isomorphism classes of $R$-modules whose size is equal to $N$. 

\

\hspace{3mm} Conjecture \ref{CLconj} was motivated by the numerical observation of Cohen and Lenstra that most class groups of imaginary quadratic field extension of $\bQ$ is cyclic and that ``the scarcity of noncylic groups can be attributed to the fact that they have many automorphisms'' (as in the first page of \cite{CL}). For instance, note that

\[|\Aut_{\bZ}(\bZ/(5) \op \bZ/(5))| = 480,\]

\

while

\[|\Aut_{\bZ}(\bZ/(25))| = 20,\]

\

even though the groups $\bZ/(5) \op \bZ/(5)$ and $\bZ/(25)$ have the same size. Hence, if this speculation is true, for $N \gg 0$, the the probability we choose $\bZ/(25)$ from $\IQ_{\leq N}$ (as in the introduction) uniformly at random should be about $480/20 = 24$ times larger than the probability we choose $\bZ/(5) \op \bZ/(5)$ similarly. Cohen and Lesntra made a hypothesis that the limiting distribution in $N$ of the class group of a random $K \in \IQ_{\leq N}$ would be similar to that of a random finite abelian group $A$, whose probability of occurrence is proportional to $1/|\Aut_{\bZ}(A)|$. They showed that for any finite abelian $p$-group $H$, we have

$$\lim_{N \ra \infty}\Prob_{A \in \Mod_{\bZ}^{\leq N}}(A[p^{\infty}] \simeq H) = \frac{1}{|\Aut_{\bZ}(H)|}\prod_{i=1}^{\infty} (1 - p^{-i}),$$

\

where we used the following definition with $S = \Mod_{\bZ}^{\leq N}$:

\begin{defn} Given a nonempty finite subset $S$ of the isomorphism classes of a category $\mc{C}$, all of whose automorphism groups are finite, we define

$$\Prob_{s \in S}(s \text{ satisfies } \ms{P}) := \frac{\sum_{\substack{s \in S, \\ s \text{ satisfies } \ms{P}}} 1/|\Aut_{\mc{C}}(s)|}{\sum_{s \in S} 1/|\Aut_{\mc{C}}(s)|},$$

\

where $\ms{P}$ is any property on $S$.
\end{defn}

\

\hspace{3mm} This provides another heuristic philosophy behind Conjecture \ref{CLconj}, which historically predates Proposition \ref{FW}. The statistics on $\Mat_{n}(\bF_{q})$ has also much to do with this philosophy. Under the conjugate action $\GL_{n}(\bF_{q}) \acts \Mat_{n}(\bF_{q})$, the set $\Mat_{n}(\bF_{q})/\GL_{n}(\bF_{q})$ of orbits parametrizes the set $\Mod_{\bF_{q}[t]}^{=q^{n}}$ of the isomorphism classes of $\bF_{q}[t]$-modules of $\bF_{q}$-dimension $n$ because each matrix $\ol{A} \in \Mat_{n}(\bF_{q})$ gives $\bF_{q}^{n}$ an $\bF_{q}[t]$-module structure, which we denote as $\ol{A} \acts \bF_{q}^{n}$, by $t \cdot v := \ol{A}v$ for $v \in \bF_{q}^{n}$ and two matrices define isomorphic $\bF_{q}[t]$-module structures if and only if they are in the same orbit under the conjugate action of $\GL_{n}(\bF_{q})$. Noting that

$$\Aut_{\bF_{q}[t]}(\ol{A} \acts \bF_{q}^{n}) = \Stab_{\GL_{n}(\bF_{q})}(\ol{A}),$$

\

by an application of the orbit-stabilizer theorem, we have

$$\Prob_{\ol{A} \in \Mat_{n}(\bF_{q})}(\ol{A} \text{ satisfies } \ms{P}) = \Prob_{\ol{A} \in \Mod_{\bF_{q}[t]}^{=q^{n}}}(\ol{A} \text{ satisfies } \ms{P}).$$

\

Therefore, Theorem \ref{main1} and Theorem \ref{main2} can be reinterpreted as the computations on explicit probability distributions on $\Mod_{\bF_{q}[t]}^{=q^{n}}$. Cohen and Lenstra considered a similar distribution on $\Mod_{\bF_{q}[t]}^{\leq q^{n}}$ instead of $\Mod_{\bF_{q}[t]}^{=q^{n}}$ in Theorem \ref{main2}. Their proof works for many Dedekind domains $R$ including $\bZ$ and $\bF_{q}[t]$, but it requires that there are finitely many finite length $R$-modules $M$ with $|M| \leq N$ for any $N > 0$ (up to isomorphisms) and the zeta function $\zeta_{R}(s)$ must have only one simple pole at $s = 1$.

\

\begin{prop}[Example 5.9 in \cite{CL}, $u = 0$]\label{CL} Let $R$ be a number ring or the coordinate ring of the open subset obtained by a smooth, geometrically connected, and projective curve over $\bF_{q}$ minus an $\bF_{q}$-point. Fix finitely many maximal ideals $\mf{m}_{1}, \dots, \mf{m}_{r}$ of $R$. For $1 \leq j \leq r$, say $H_{j}$ is an $\mf{m}_{j}^{\infty}$-torsion $R$-module of finite length and $q_{j} := |R/\mf{m}_{j}|$. We have

$$\lim_{N \ra \infty}\Prob_{A \in \Mod_{R}^{\leq N}}\left(
\begin{array}{c}
A[\mf{m}_{j}^{\infty}] \simeq H_{j} \\
\text{for } 1 \leq j \leq r
\end{array}
\right) = \prod_{j=1}^{r}\frac{1}{|\Aut_{R}(H_{j})|}\prod_{i=1}^{\infty} (1 - q_{j}^{-i}).$$
\end{prop}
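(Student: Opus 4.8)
The plan is to reduce the statement to a generating-function (Dirichlet series) computation about finite length $R$-modules, using that for a number ring or the indicated coordinate ring the relevant zeta function $\zeta_R(s)$ has a single simple pole at $s=1$. The starting point is the definition of $\Prob_{A \in \Mod_R^{\leq N}}$: the numerator is $\sum_{|M| \leq N, \, M[\mf{m}_j^\infty] \simeq H_j} 1/|\Aut_R(M)|$ and the denominator is $\sum_{|M| \leq N} 1/|\Aut_R(M)|$. The key structural fact is that any finite length $R$-module $M$ decomposes canonically as $M \simeq \bigoplus_{\mf{m}} M[\mf{m}^\infty]$ over the (finitely many) maximal ideals in its support, and $\Aut_R(M) = \prod_{\mf{m}} \Aut_R(M[\mf{m}^\infty])$; moreover $M[\mf{m}^\infty]$ is naturally a module over the completion $R_{\mf{m}}$, which is a complete DVR with residue field of size $q_{\mf{m}} := |R/\mf{m}|$. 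So the ``mass'' $\sum_M |M|^{-s}/|\Aut_R(M)|$ factors as an Euler product $\prod_{\mf{m}} Z_{\mf{m}}(s)$, where $Z_{\mf{m}}(s) := \sum_{N \geq 0} (\sum_{\lambda \vdash N} q_{\mf{m}}^{-N s}/w(q_{\mf{m}}, \lambda))$ ranges over isomorphism classes of finite length $R_{\mf{m}}$-modules.

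First I would make this Euler product rigorous: introduce the Dirichlet series $f(s) := \sum_{M \in \Mod_R^{<\infty}} |M|^{-s}/|\Aut_R(M)|$ and show $f(s) = \prod_{\mf{m}} Z_{\mf{m}}(s)$ as formal/convergent Dirichlet series, and similarly that the ``restricted'' series (fixing $M[\mf{m}_j^\infty] \simeq H_j$ for $j = 1, \dots, r$) equals $\left(\prod_{j=1}^r \frac{|H_j|^{-s}}{|\Aut_R(H_j)|}\right) \prod_{\mf{m} \notin \{\mf{m}_1,\dots,\mf{m}_r\}} Z_{\mf{m}}(s)$. Next I would identify each local factor: by Lemma \ref{key} with $y = 1$ (applied with $q$ replaced by $q_{\mf{m}}$, and the variable set to $q_{\mf{m}}^{-s}$), $Z_{\mf{m}}(s) = \prod_{i \geq 1} (1 - q_{\mf{m}}^{-i})^{-1} \cdot (\text{something})$ — more precisely I expect $\sum_\lambda u^{|\lambda|}/w(q,\lambda) = \prod_{i \geq 1}(1 - q^{-i}u)^{-1}$ type identity, so that $Z_{\mf{m}}(s) = \prod_{i \geq 1} (1 - q_{\mf{m}}^{1-i-s})^{-1}$, which has a simple pole exactly where $q_{\mf{m}}^{1-s} = 1$, i.e. matching the pole of $\zeta_R(s) = \prod_{\mf{m}}(1 - q_{\mf{m}}^{-s})^{-1}$ at $s = 1$ after the shift. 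The point is that $f(s)$ inherits a single simple pole at $s = 1$ from $\zeta_R(s-1)$-like behavior, with all other local factors (and the full product over $\mf{m} \notin \{\mf{m}_j\}$) analytic and nonzero there.

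Then I would invoke a Tauberian theorem (Wiener–Ikehara, or the elementary Delange-style Tauberian lemma) to pass from the Dirichlet series to asymptotics of partial sums: if $g(s) = \sum_M a_M |M|^{-s}$ with $a_M \geq 0$ converges for $\Re(s) > 1$ and extends to a neighborhood of $\Re(s) = 1$ with only a simple pole at $s = 1$ of residue $c$ (in the appropriate variable; one must be careful that $|M|$ takes values in powers of a fixed prime when $R = \bF_q[t]$-type, so the correct statement is about $\sum_{|M| \leq N} a_M \sim c' N$ up to the lattice issue, which is handled because the ratio is what matters), then $\sum_{|M| \leq N} a_M$ grows like $c \cdot N$ (times a constant). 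Taking the ratio of the restricted partial sum to the full partial sum, the $N$-growth and the common pole cancel, leaving exactly $\lim_{N \to \infty} \Prob = \left(\prod_{j=1}^r \frac{1}{|\Aut_R(H_j)|} |H_j|^{\cdot}\right)\cdot \frac{\prod_{\mf{m} \neq \mf{m}_j} Z_{\mf{m}}(1)}{\prod_{\mf{m}} Z_{\mf{m}}(1)}$ — wait, one must evaluate the non-pole local factors at the pole, and the residues; carrying this out gives $\prod_{j=1}^r \frac{1}{|\Aut_R(H_j)|} \cdot \frac{1}{Z_{\mf{m}_j}(1)}$-type contribution, and $1/Z_{\mf{m}_j}(1)$ with the pole removed is $\prod_{i \geq 1}(1 - q_j^{-i})$ after the correct bookkeeping of which factor carries the pole.

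The main obstacle I anticipate is the analytic bookkeeping at the pole: $Z_{\mf{m}_j}(s)$ itself has the pole (its $i = ?$ factor vanishes at $s = 1$), so ``removing'' the factor $Z_{\mf{m}_j}$ and replacing it by the finite quantity $|H_j|^{-s}/|\Aut_R(H_j)|$ is exactly what produces the missing $\prod_i (1 - q_j^{-i})$ in the answer — I need to track precisely which Euler factor is responsible for the simple pole of $f(s)$ and confirm that it sits in $Z_{\mf{m}_j}$ for each $j$, and that the residue of $f(s)$ and of the restricted series differ by the clean ratio. A secondary technical point is the lattice/periodicity issue in the Tauberian step when $R$ has only finitely many residue characteristics' worth of module sizes (genuinely relevant for $R = \bF_q[t]$, where $|M|$ is always a power of $q$); this is standard — one applies the Tauberian theorem in the variable $q^{-s}$ or uses that only the ratio of two such sums is needed — but it must be stated carefully. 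Everything else (the canonical primary decomposition, the automorphism group product, the identification of local factors via Lemma \ref{key}, the Euler product) is routine, so I would present those briskly and spend the detail budget on the pole analysis and the Tauberian passage.
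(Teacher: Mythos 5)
The paper does not actually prove Proposition \ref{CL}: it is quoted from Cohen--Lenstra (Example 5.9 of \cite{CL} with $u=0$), and the only comment made about the proof is that it requires $\Mod_{R}^{\leq N}$ to be finite and $\zeta_{R}(s)$ to have a single simple pole. Your Dirichlet-series/Euler-product/Tauberian strategy is precisely that argument, so the route is the intended one; the only question is whether your version is internally correct, and there is one concrete slip to fix.

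The slip is in the local factor, and hence in the location of the pole. A finite length module over the completion $R_{\mf{m}}$ of type $\lambda$ has cardinality $q_{\mf{m}}^{|\lambda|}$, so applying Lemma \ref{key} with $y=q_{\mf{m}}^{-s}$ gives $Z_{\mf{m}}(s)=\prod_{i\geq 1}(1-q_{\mf{m}}^{-i-s})^{-1}$, not $\prod_{i\geq 1}(1-q_{\mf{m}}^{1-i-s})^{-1}$. Consequently $f(s)=\prod_{\mf{m}}Z_{\mf{m}}(s)=\prod_{i\geq 1}\zeta_{R}(s+i)$, which converges for $\Re(s)>0$ and has its unique simple pole at $s=0$ (not $s=1$), coming from the factor $\zeta_{R}(s+1)$. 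In particular the pole does \emph{not} ``sit in $Z_{\mf{m}_{j}}$'' for any single $j$, contrary to what you propose to verify: each individual $Z_{\mf{m}}(s)$ is finite and nonzero at $s=0$, and the pole is produced only by the infinite product over all $\mf{m}$, exactly as for $\zeta(s)=\prod_{p}(1-p^{-s})^{-1}$ at $s=1$. (With your normalization each local factor would blow up at the pole and the Euler product would not make sense there.) This correction actually dissolves your anticipated ``main obstacle'': the restricted series is $g(s)=\prod_{j}\frac{|H_{j}|^{-s}}{|\Aut_{R}(H_{j})|}\cdot\prod_{\mf{m}\notin\{\mf{m}_{1},\dots,\mf{m}_{r}\}}Z_{\mf{m}}(s)$, so the ratio $g(s)/f(s)=\prod_{j}\frac{|H_{j}|^{-s}}{|\Aut_{R}(H_{j})|\,Z_{\mf{m}_{j}}(s)}$ is holomorphic and nonzero at $s=0$, with value $\prod_{j}|\Aut_{R}(H_{j})|^{-1}\prod_{i\geq 1}(1-q_{j}^{-i})$ because $Z_{\mf{m}_{j}}(0)^{-1}=\prod_{i\geq 1}(1-q_{j}^{-i})$. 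Since $f$ and $g$ have simple poles at the same point and $g/f$ is regular there, the Tauberian theorem (applied at $s=0$, so the partial sums grow like $c\log N$, or like $cn$ in the power-series variable $q^{-s}$ in the function-field case, which also handles the lattice issue you flag) gives that the limit of the probability is the ratio of residues, i.e.\ $\lim_{s\to 0^{+}}g(s)/f(s)$, which is exactly the claimed product. With that bookkeeping repaired, your argument goes through.
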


\

\begin{rmk} Roughly speaking, Theorem \ref{main2x}, Theorem \ref{main3x}, Theorem \ref{main2}, and Proposition \ref{CL} (for the case $R = \bF_{q}[t]$) tell us about how distributions involving some global information about $\bA^{1}_{\bF_{q}} = \Spec(\bF_{q}[t])$ can be obtained by their local information. As their invariants such as $n$ or $N$ go to infinity, their local events become independent. 
\end{rmk}

\

\begin{rmk} Continuing the proof of Corollary \ref{Ful}, we have

\begin{align*}\lim_{n \ra \infty}\Prob_{\ol{A} \in \GL_{n}(\bF_{q})}(\ol{A}[P^{\infty}] \simeq H) &= \lim_{n \ra \infty}\Prob_{A \in \Mod_{\bF_{q}[t]}^{= q^{n}}}\left(
\begin{array}{c}
A[t^{\infty}] = 0 \text{ and} \\\
A[P(t)^{\infty}] \simeq H
\end{array}
\right) \\
&= \lim_{n \ra \infty}\Prob_{A \in \Mod_{\bF_{q}[t]}^{\leq q^{n}}}\left(
\begin{array}{c}
A[t^{\infty}] = 0 \text{ and} \\\
A[P(t)^{\infty}] \simeq H
\end{array}
\right),
\end{align*}

\

so Fulman's result about random matrices in $\GL_{n}(\bF_{q})$ can be realized as a special case of Proposition \ref{CL}, a heuristic result due to Cohen-Lenstra, where they came up with Cohen-Lenstra distributions in the first place. This provides a concrete reason why a random matrix in $\GL_{n}(\bF_{q})$ produces a Cohen-Lenstra distribution (as $n \ra \infty$), resolving previous inquiries made by Washington \cite{Was}, Lengler \cite{Len}, Fulman \cite{Ful14}, and Fulman-Kaplan \cite{FK}. In general, many algebraic objects, whose probability of occurrence is inversely proportional to the numbers of their automorphisms, seem to follow some version of Cohen-Lenstra distribution, and our results exemplify such phenomena. More broad examples on ``universal'' occurrences of Cohen-Lenstra distributions (or similar looking distributions) can be found in literature (e.g., \cite{Woo17} and \cite{Woo19}), and this seems to be an active area of research.
\end{rmk}

\

\section{Converting Haar measure problems into problems over finite local rings}

\hspace{3mm} In this section, we explain how to reduce the problems of computing the probabilities in Theorem \ref{main1x}, Theorem \ref{main2x} and Theorem \ref{main3x} given by the Haar measure on $\Mat_{n}(R)$ into some combinatorial problems over finite local rings. This will be used in the next section when we show how Theorem \ref{main1} and Theorem \ref{main2} imply Theorem \ref{main1x} and Theorem \ref{main2x}.

\

\begin{lem}\label{red} Let $(R, \mf{m})$ be a complete DVR with $R/\mf{m} = \bF_{q}$ and $H$ a finite length $R$-module. Fix any $N \in \bZ_{\geq 0}$ such that $\mf{m}^{N} H = 0$, noting that there always exists such $N$. For any $A \in \Mat_{n}(R)$, we have $\coker(A) \simeq H$ if and only if $\coker(\ol{A}) \simeq H$, where $\ol{A} \in \Mat_{n}(R/\mf{m}^{N+1})$ is the image of $A$ modulo $\mf{m}^{N+1}$.
\end{lem}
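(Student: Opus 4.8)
The plan is to exploit the structure theory over the complete DVR $(R,\mf{m})$, together with Nakayama's lemma, to reduce the isomorphism question about $\coker(A)$ to a congruence statement. The key point is that knowing $A$ modulo $\mf{m}^{N+1}$ already determines $\coker(A)$ up to isomorphism, provided $\mf{m}^N H = 0$ where $H$ is the target module.

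First I would set up notation: write $\pi \in \mf{m}$ for a uniformizer, so $\mf{m}^k = (\pi^k)$, and let $\bar{A}$ denote the reduction of $A$ modulo $\mf{m}^{N+1}$. Since $\coker(\bar A) = \coker(A)/\mf{m}^{N+1}\coker(A)$ (base change of cokernels is right exact), the direction ``$\coker(A) \simeq H \implies \coker(\bar A) \simeq H$'' is immediate: if $\coker(A) \simeq H$ then $\mf{m}^N H = 0$ forces $\mf{m}^{N+1}\coker(A) = 0$, hence $\coker(\bar A) = \coker(A)/\mf{m}^{N+1}\coker(A) \simeq \coker(A) \simeq H$. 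So the content is the converse.

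For the converse, suppose $\coker(\bar A) \simeq H$. I would first observe that $\coker(A)$ is a finite length $R$-module: indeed $\coker(\bar A)$ is finite, and $\coker(A)$ surjects onto $\coker(\bar A) = \coker(A)/\mf{m}^{N+1}\coker(A)$, so by Nakayama (applied over the local ring $R$, using that $\coker(A)$ is finitely generated) $\coker(A)$ is generated by the same number of elements as $\coker(\bar A)$, in particular finitely generated; and a finitely generated $R$-module with finite length cokernel modulo a power of $\mf{m}$ is itself of finite length once one checks it is torsion — which follows because $\bar A$ having finite cokernel over the Artinian ring $R/\mf{m}^{N+1}$ forces $\det A \ne 0$ in $R$ (the determinant is a unit times a power of $\pi$, not zero), so $A$ is injective and $\coker(A)$ is torsion, hence of finite length. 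Now both $\coker(A)$ and $H$ are finite length $R$-modules, and I claim $\mf{m}^N \coker(A) = 0$: since $\coker(A)$ is finite length over the PID $R$, write $\coker(A) \simeq \bigoplus_i R/\mf{m}^{a_i}$; then $\coker(A)/\mf{m}^{N+1}\coker(A) \simeq \bigoplus_i R/\mf{m}^{\min(a_i, N+1)}$, and this is isomorphic to $H$ with $\mf{m}^N H = 0$, which forces every $\min(a_i, N+1) \le N$, i.e. $a_i \le N$ for all $i$. Therefore $\mf{m}^{N+1}\coker(A) = 0$ already, so $\coker(A) = \coker(A)/\mf{m}^{N+1}\coker(A) = \coker(\bar A) \simeq H$.

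The main obstacle — really the only subtle point — is justifying that $\coker(A)$ has finite length rather than, say, an infinite free part; this is exactly where completeness of $R$ (or at least that $R$ is a domain so $\det A \ne 0 \Rightarrow A$ injective) and the fact that $\bar A$ has genuinely finite cokernel over the finite ring $R/\mf{m}^{N+1}$ (not merely over $\bF_q$) get used: finiteness of $\coker(\bar A)$ over $R/\mf{m}^{N+1}$ is equivalent to $\bar A \in \GL_n(\mathrm{Frac}(R/\mf{m}^{N+1})$-total-ring$)$... more cleanly, to $\det \bar A$ being a non-zerodivisor is false in general, so instead I would argue directly: $\coker(\bar A)$ finite means $\det \bar A \neq 0$ is not automatic — rather, I would use that any lift $A$ of an invertible-over-$\bF_q$ reduction has $\det A$ a unit, while in the non-invertible case I run the argument above after passing to the smaller power $\mf{m}^{N'+1}$ with $N'$ minimal such that $\mf{m}^{N'}H = 0$ and note finiteness of $\coker(\bar A)$ as an abstract set forces, via the surjection $\coker(A) \twoheadrightarrow \coker(\bar A)$ and Nakayama, that $\coker(A)$ is finitely generated with $\le l_H := \dim_{\bF_q}(H/\mf{m}H)$ generators, and then a rank count (the generic rank of $\coker(A)$ equals $n - \mathrm{rank}_{\mathrm{Frac}(R)} A$, which is $0$ precisely because $\bar A$ modulo $\mf{m}$, being surjective onto something with the right corank, pins down $\mathrm{rank}$) gives torsion-ness. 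I will state this cleanly in the write-up; it is routine commutative algebra but worth spelling out.
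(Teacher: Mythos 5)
Your overall strategy is sound and is in substance the same as the paper's: the forward direction via right-exactness of base change is exactly the paper's first line, and your key step --- decomposing $\coker(A)$ by the structure theorem and comparing exponents with $H$ after reduction modulo $\mf{m}^{N+1}$ --- is the module-level version of the paper's Smith normal form computation. The exponent comparison ($\min(a_i,N+1)\leq N$ forces $a_i\leq N$, hence $\mf{m}^{N+1}\coker(A)=0$) is correct.

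The genuine problem is the preliminary step where you try to show $\coker(A)$ has finite length, i.e.\ no free part. Both justifications you offer fail. First, $\coker(\ol{A})$ is \emph{always} finite as a set --- it is a quotient of the finite set $(R/\mf{m}^{N+1})^{n}$ --- so its finiteness carries no information and certainly does not force $\det A\neq 0$; the zero matrix is a counterexample. Second, the proposed ``rank count'' is also false: the rank of $A$ over $\mathrm{Frac}(R)$ is not pinned down by the reduction of $A$ modulo $\mf{m}$ (take $A=\pi I_{n}$, whose reduction is $0$ but which is injective), and more to the point nothing about $\ol{A}$ as a matrix over $\bF_{q}$ alone can rule out a free summand. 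What actually rules it out is the hypothesis $\mf{m}^{N}H=0$ itself, used at the level of $R/\mf{m}^{N+1}$: if you write $\coker(A)\simeq R^{f}\oplus\bigoplus_{i}R/\mf{m}^{a_{i}}$ \emph{without} first assuming torsion, then $\coker(\ol{A})\simeq (R/\mf{m}^{N+1})^{f}\oplus\bigoplus_{i}R/\mf{m}^{\min(a_{i},N+1)}$, and since $\mf{m}^{N}$ does not annihilate $R/\mf{m}^{N+1}$, the isomorphism $\coker(\ol{A})\simeq H$ forces $f=0$ along with $a_{i}\leq N$. This is precisely the paper's observation that no diagonal entry of the Smith normal form of $A$ can vanish modulo $\mf{m}^{N+1}$, and it is also why the lemma is stated with $\mf{m}^{N+1}$ rather than $\mf{m}^{N}$. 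So the fix is simply to delete the detour through $\det A$ and torsion-ness and run your own structure-theorem computation with the free rank $f$ included; as written, however, that intermediate step contains false claims and would not survive refereeing.
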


\begin{proof} If $\coker(A) \simeq H$, then $\coker(\ol{A}) \simeq H/\mf{m}^{N+1}H \simeq H$ because $\mf{m}^{N+1}H = \mf{m}\mf{m}^{N}H = 0$. Conversely, let $\coker(\ol{A}) \simeq H$. Since $R$ is a PID, we may have

$$H \simeq R/\mf{m}^{\ld_{1}} \op \cdots \op R/\mf{m}^{\ld_{l}}$$

\

for some partition $\ld = (\ld_{1}, \dots, \ld_{l})$. Since $\mf{m}^{N}H = 0$, we have $\ld_{i} \leq N$ for all $i$. Choosing a generator $\pi$ of $\mf{m}$, the fact that $R$ is a PID lets us choose $g_{1}, g_{2} \in \GL_{n}(R)$ such that $g_{1}Ag_{2}$ is a diagonal matrix (so-called a \textbf{Smith normal form} of $A$). Since $R$ is a DVR, each diagonal entry of $g_{1}Ag_{2}$ is either $0$ or of the form $u \pi^{e}$, where $u$ is a unit of $R$ and $e \in \bZ_{\geq 0}$. There should not be any $0$ in the diagonal entries modulo $\mf{m}^{N+1}$ because $\coker(\ol{A}) \simeq H$ is annihilated by $\mf{m}^{N}$. (This is why our conclusion is about $A$ modulo $\mf{m}^{N+1}$ instead of $\mf{m}^{N}$.) Thus, the diagonal entries of $g_{1}Ag_{2}$ are of the form $u_{1}\pi^{e_{1}}, \dots, u_{n}\pi^{e_{n}}$, where $u_{i} \in R^{\times}$ and $0 \leq e_{i} \leq N$. The matrix $\ol{g_{1}} \ol{A} \ol{g_{2}} \in \Mat_{n}(R/\mf{m}^{N+1})$ is diagonal with nonzero entires $\ol{u_{1}}\ol{\pi}^{e_{1}}, \dots, \ol{u_{n}}\ol{\pi}^{e_{n}} \in R/\mf{m}^{N+1}$. We must have $(e_{1}, \dots, e_{n}) = (\ld_{1}, \dots, \ld_{l}, 0, \dots, 0)$ because $\ol{g_{1}}, \ol{g_{2}} \in \GL_{n}(R/\mf{m}^{N+1})$ so that 

\begin{align*}
R/\mf{m}^{e_{1}} \op \cdots \op R/\mf{m}^{e_{n}} &\simeq \coker(\ol{g_{1}}\ol{A}\ol{g_{2}}) \\
&\simeq \coker(\ol{A}) \\
&\simeq H \\
&\simeq R/\mf{m}^{\ld_{1}} \op \cdots \op R/\mf{m}^{\ld_{l}}.
\end{align*}

\

Therefore, we have

\begin{align*}
\coker(A) &\simeq R/\mf{m}^{e_{1}} \op \cdots \op R/\mf{m}^{e_{n}} \\
&\simeq R/\mf{m}^{\ld_{1}} \op \cdots \op R/\mf{m}^{\ld_{l}} \\
&\simeq H,
\end{align*}

as desired.
\end{proof}

\

\begin{rmk} The easiest case of Lemma \ref{red} is when $N = 0$, which necessarily means $H = 0$. For this case, the lemma can be proven by a direct application of Nakayama's lemma. This special case is all we need for Theorem \ref{main1x} and Theorem \ref{main2x}, but the full version of Lemma \ref{red} is needed for proving Theorem \ref{main3x}. We will not directly use Lemma \ref{red}, but it will be used to prove the following lemma, directly applicable for proving all of our main theorems. It describes how we may concretely think of certain events according to the Haar measure on $\Mat_{n}(R)$.
\end{rmk}

\

\begin{lem}\label{Haar} Let $(R, \mf{m})$ be a complete DVR with $R/\mf{m} = \bF_{q}$ and $H_{1}, \dots, H_{r}$ finite length $R$-modules so that we may pick some $N \in \bZ_{\geq 0}$ such that $\mf{m}^{N}H_{1} = \cdots = \mf{m}^{N}H_{r} = 0$. For any monic polynomials $f_{1}(t), \dots, f_{r}(t) \in R[t]$, we have

$$\Prob_{A \in \Mat_{n}(R)}\left(\begin{array}{c}
\coker(f_{j}(A)) \simeq H_{j} \\
\text{ for } 1 \leq j \leq r
\end{array}\right) = \Prob_{\ol{A} \in \Mat_{n}(R/\mf{m}^{N+1})}\left(\begin{array}{c}
\coker(f_{j}(\ol{A})) \simeq H_{j} \\
\text{ for } 1 \leq j \leq r
\end{array}\right).$$
\end{lem}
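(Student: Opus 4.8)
The plan is to reduce the statement over the Haar measure on $\Mat_n(R)$ to a statement over the finite ring $R/\mf{m}^{N+1}$, using Lemma \ref{red} applied simultaneously to each of the modules $f_j(A)$. The key point is that the Haar probability measure on $\Mat_n(R)$ pushes forward, under reduction modulo $\mf{m}^{N+1}$, to the uniform measure on the finite set $\Mat_n(R/\mf{m}^{N+1})$; this is the standard compatibility of the Haar measure on the profinite additive group $\Mat_n(R) = \varprojlim_k \Mat_n(R/\mf{m}^k)$ with its finite quotients. So the right-hand side is literally the pushforward of the left-hand side along reduction modulo $\mf{m}^{N+1}$, and it suffices to show that the event on the left is the preimage of the event on the right.

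First I would observe that for a monic polynomial $f_j(t) \in R[t]$, reduction modulo $\mf{m}^{N+1}$ commutes with evaluation: if $\ol{A}$ denotes the image of $A$ in $\Mat_n(R/\mf{m}^{N+1})$, then $\ol{f_j(A)} = f_j(\ol{A})$, since reducing a polynomial expression in $A$ modulo $\mf{m}^{N+1}$ is the same as reducing $A$ first and then evaluating. Next, since $\mf{m}^N H_j = 0$ for each $j$, Lemma \ref{red} (applied with the module $H = H_j$, the same $N$, and the matrix $f_j(A) \in \Mat_n(R)$ in place of $A$) tells us that $\coker(f_j(A)) \simeq H_j$ if and only if $\coker(f_j(\ol{A})) \simeq H_j$. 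Taking the conjunction over $1 \leq j \leq r$, the event $\{\coker(f_j(A)) \simeq H_j \text{ for all } j\}$ in $\Mat_n(R)$ is exactly the preimage, under reduction modulo $\mf{m}^{N+1}$, of the event $\{\coker(f_j(\ol{A})) \simeq H_j \text{ for all } j\}$ in $\Mat_n(R/\mf{m}^{N+1})$.

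Finally I would combine these two facts: the Haar measure of a cylinder set (the preimage of a subset of $\Mat_n(R/\mf{m}^{N+1})$) equals the normalized counting measure of that subset, because $\Mat_n(R) \to \Mat_n(R/\mf{m}^{N+1})$ is a surjective homomorphism of compact groups with all fibers (cosets of the kernel) of equal Haar measure $1/|\Mat_n(R/\mf{m}^{N+1})|$. Applying this to the event described above yields the claimed equality of probabilities.

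I do not expect a serious obstacle here; the statement is essentially a bookkeeping lemma packaging Lemma \ref{red} together with the elementary measure theory of profinite groups. The only point requiring a small amount of care is making explicit that the Haar measure on $\Mat_n(R)$ reduces to the uniform measure on each finite quotient $\Mat_n(R/\mf{m}^{k})$ — this is where "the $R$-matrices are distributed Haar-uniformly" gets its concrete combinatorial meaning — and that Lemma \ref{red} may legitimately be invoked with $f_j(A)$ playing the role of the matrix $A$ in its statement, which is immediate since $f_j(A) \in \Mat_n(R)$ and its reduction is $f_j(\ol{A})$.
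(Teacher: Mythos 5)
Your proposal is correct and follows essentially the same route as the paper: the paper likewise notes that $f_j(A+\pi^{N+1}B)=f_j(A)+\pi^{N+1}C$ (your observation that reduction commutes with evaluating a polynomial), invokes Lemma \ref{red} to see the event upstairs is the preimage of the event downstairs, and uses that the Haar measure assigns mass $1/|\Mat_n(R/\mf{m}^{N+1})|$ to each fiber. No gaps.
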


\begin{proof} Consider the projection $\Mat_{n}(R) \tra \Mat_{n}(R/\mf{m}^{N+1})$ given modulo $\mf{m}^{N+1}$. Denoting this map by $A \mapsto \ol{A}$, the Haar measure on $\Mat_{n}(R)$ assigns $1/|\Mat_{n}(R/\mf{m}^{N+1})|$ to the fiber $A + \mf{m}^{N+1}\Mat_{n}(R)$ of any $\ol{A} \in \Mat_{n}(R/\mf{m}^{N+1})$. Moreover, for any monic polynomial $f(t) \in R[t]$, a generator $\pi$ of $\mf{m}$, and any $B \in \Mat_{n}(R)$, we have $f(A + \pi^{N+1}B) = f(A) + \pi^{N+1}C$ for some $C \in \Mat_{n}(R)$. Thus, for any $R$-module $H$ with $\mf{m}^{N}H = 0$, we have $\coker(f(A)) \simeq H$ if and only if $\coker(f(A + \pi^{N+1}B)) \simeq H$ for all $B \in \Mat_{n}(R)$. Having this in mind, applying Lemma \ref{red} lets us see that

\begin{align*}
&\Prob_{A \in \Mat_{n}(R)}\left(\begin{array}{c}
\coker(f_{j}(A)) \simeq H_{j} \\
\text{ for } 1 \leq j \leq r
\end{array}\right) \\
&= \sum_{\ol{A} \in \Mat_{n}(R/\mf{m}^{N+1})} \mu_{n}
\left((A + \mf{m}^{N+1}\Mat_{n}(R)) \cap \left\{\begin{array}{c}
M \in \Mat_{n}(R) : \\
\coker(f_{j}(M)) \simeq H_{j} \text{ for } 1 \leq j \leq r
\end{array}\right\}\right) \\
&= \frac{1}{|\Mat_{n}(R/\mf{m}^{N+1})|} \left|\left\{\begin{array}{c}
\ol{A} \in \Mat_{n}(R/\mf{m}^{N+1}) : \\
\coker(f_{j}(\ol{A})) \simeq H_{j} \text{ for } 1 \leq j \leq r
\end{array}\right\}\right| \\
&= \Prob_{\ol{A} \in \Mat_{n}(R/\mf{m}^{N+1})}\left(\begin{array}{c}
\coker(f_{j}(\ol{A})) \simeq H_{j} \\
\text{ for } 1 \leq j \leq r
\end{array}\right),
\end{align*}

\

where $\mu_{n}$ denoted the Haar (probability) measure on $\Mat_{n}(R)$. This finishes the proof.
\end{proof}

\

\section{Reductions for Theorems \ref{main1x}, \ref{main2x}, \ref{main3x}}\label{reduction}

\subsection{Theorems \ref{main1} and \ref{main2} imply Theorems \ref{main1x} and Theorem \ref{main2x}} In this section, we show that Theorems \ref{main1} and \ref{main2} imply Theorems \ref{main1x} and Theorem \ref{main2x}, respectively.

\

\begin{proof}[Proof that Theorems \ref{main1} and \ref{main2} imply Theorems \ref{main1x} and Theorem \ref{main2x}] We keep the notations in Theorem \ref{main2x}. Taking $N = 0$ in Lemma \ref{Haar}, we have

$$\Prob_{A \in \Mat_{n}(R)}\left(\begin{array}{c}
\coker(P_{j}(A)) = 0 \\
\text{ for } 1 \leq j \leq r
\end{array}\right) = \Prob_{\ol{A} \in \Mat_{n}(\bF_{q})}\left(\begin{array}{c}
\coker(P_{j}(\ol{A})) = 0 \\
\text{ for } 1 \leq j \leq r
\end{array}\right).$$

\

Moreover, we note that for any $\ol{A} \in \Mat_{n}(\bF_{q})$, we have $\coker(P_{j}(\ol{A})) = 0$ if and only if $P_{j}(\ol{A}) = \ol{P}_{j}(\ol{A})$ is invertible in $\Mat_{n}(\bF_{q})$. This is the same as saying $A[\ol{P}_{j}^{\infty}] = 0$, so this finishes the proof by taking $H_{1} = \cdots = H_{r} = 0$ in Theorem \ref{main2} (and, taking $r = 1$, $H_{1} = H = 0$ in Theorem \ref{main1}).
\end{proof}

\

\begin{rmk} In the above proof, we only used the special cases of Theorem \ref{main1} and Theorem \ref{main2} when $H = 0$ and $H_{1} = \cdots = H_{r} = 0$ to deduce Theorem \ref{main1x} and Theorem \ref{main2x}, respectively. However, we will see with Corollary \ref{formula} that it is also easy to deduce Theorem \ref{main1} and Theorem \ref{main2} from Theorem \ref{main1x} and Theorem \ref{main2x}. Underlying this is a formula due to Boreico \cite{Bor} given as Lemma \ref{Bor}.
\end{rmk}

\

\subsection{Theorem \ref{main2} implies Theorem \ref{main3x}} This section is devoted for showing that Theorem \ref{main2} implies Theorem \ref{main3x}. The crucial lemma is the following result due to Friedman and Washington ($\#_{H}(\bar{R})$ on p.236 of \cite{FW}).

\

\begin{lem}\label{count} Let $(R, \mf{m})$ be a complete DVR with $R/\mf{m} = \bF_{q}$ and $H$ a finite length $R$-module. Choose any $N \in \bZ_{\geq 0}$ such that $\mf{m}^{N}H = 0$. Fix any monic polynomial $P(t) \in R[t]$ of degree $1$. For any $\ol{A} \in \Mat_{n}(\bF_{q})$, the number of lifts $A \in \Mat_{n}(R/\mf{m}^{N+1})$ of $\ol{A}$ such that $\coker(P(A)) \simeq H$ is equal to

$$\left\{
	\begin{array}{ll}
	q^{Nn^{2} +l_{H}^{2}} |\Aut_{R}(H)|^{-1} \prod_{i=1}^{l_{H}}( 1 - q^{-i} )^{2} & \mbox{if } \dim_{\bF_{q}}(\coker(P(\ol{A}))) = l_{H},  \\
	0 & \mbox{if } \dim_{\bF_{q}}(\coker(P(\ol{A}))) \neq l_{H},
	\end{array}\right.$$
\end{lem}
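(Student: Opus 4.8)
The plan is to bootstrap from Proposition \ref{FW} (used over both $R$ and $R/\mf{m}^{N+1}$) together with the classical count of matrices over $\bF_{q}$ of prescribed rank. Write $R_{N} := R/\mf{m}^{N+1}$, and $P(t) = t - a$ with $a \in R$, possible since $\deg P = 1$. First I would strip off the polynomial: since $P(A) = A - aI_{n}$, the map $A \mapsto P(A)$ is a bijection from the set of lifts $A \in \Mat_{n}(R_{N})$ of the given $\ol{A}$ onto the set of lifts $B \in \Mat_{n}(R_{N})$ of $\ol{C} := P(\ol{A})$, so the number in the lemma equals $\#\{B \in \Mat_{n}(R_{N}) : \ol{B} = \ol{C}, \ \coker_{R_{N}}(B) \simeq H\}$. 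Tensoring a presentation $R_{N}^{n} \xrightarrow{B} R_{N}^{n} \to \coker_{R_{N}}(B) \to 0$ with $\bF_{q}$ shows $\coker_{\bF_{q}}(\ol{C}) \simeq \coker_{R_{N}}(B)/\mf{m}\coker_{R_{N}}(B)$, which is $\simeq H/\mf{m}H$ whenever $\coker_{R_{N}}(B) \simeq H$; hence the count vanishes unless $\dim_{\bF_{q}}\coker(\ol{C}) = l_{H}$, which settles the ``else'' branch.

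Next I would show that $N(\ol{C}) := \#\{B \in \Mat_{n}(R_{N}) : \ol{B} = \ol{C}, \ \coker_{R_{N}}(B) \simeq H\}$ depends only on $\dim_{\bF_{q}}\coker(\ol{C})$. Any two $n \times n$ matrices over $\bF_{q}$ of equal rank differ by left and right multiplication by elements of $\GL_{n}(\bF_{q})$; lifting those two invertible factors arbitrarily to $\Mat_{n}(R_{N})$ — every such lift is automatically invertible, since $\ker(R_{N} \to \bF_{q})$ is nilpotent — and multiplying gives a cokernel-preserving bijection between the corresponding fibers of $\Mat_{n}(R_{N}) \to \Mat_{n}(\bF_{q})$. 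Write $N_{0}$ for the common value of $N(\ol{C})$ over all $\ol{C}$ of corank $l_{H}$.

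Then I would pin down $N_{0}$ by a global-to-local count. On one hand $\sum_{\ol{C}} N(\ol{C}) = \#\{B \in \Mat_{n}(R_{N}) : \coker_{R_{N}}(B) \simeq H\}$, which by Lemma \ref{Haar} with $r = 1$ and $f_{1}(t) = t$ (valid since $\mf{m}^{N}H = 0$) equals $q^{(N+1)n^{2}}\Prob_{A \in \Mat_{n}(R)}(\coker(A) \simeq H)$, and Proposition \ref{FW} evaluates the latter to $q^{(N+1)n^{2}}|\Aut_{R}(H)|^{-1}\big[\prod_{i=1}^{n}(1-q^{-i})\big]\big[\prod_{j=n-l_{H}+1}^{n}(1-q^{-j})\big]$ when $n \geq l_{H}$ (both sides vanishing when $n < l_{H}$). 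On the other hand, since only the $\ol{C}$ of corank $l_{H}$ contribute, the same sum is $N_{0}$ times the classical number $q^{n^{2}-l_{H}^{2}}\prod_{i=l_{H}+1}^{n}(1-q^{-i})^{2}/\prod_{j=1}^{n-l_{H}}(1-q^{-j})$ of $n \times n$ matrices over $\bF_{q}$ of corank $l_{H}$. Equating the two and using $\prod_{j=n-l_{H}+1}^{n}(1-q^{-j})\prod_{j=1}^{n-l_{H}}(1-q^{-j}) = \prod_{j=1}^{n}(1-q^{-j})$ to cancel the factors with index $> l_{H}$ gives $N_{0} = q^{Nn^{2}+l_{H}^{2}}|\Aut_{R}(H)|^{-1}\prod_{i=1}^{l_{H}}(1-q^{-i})^{2}$, which is exactly the assertion.

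I do not expect a genuine obstacle here, as the argument recycles tools already available in the excerpt. The points that want care are the cokernel-preserving bijection between fibers — this is where the degree-$1$ hypothesis on $P$ (hence the innocuous translation $A \mapsto A - aI_{n}$) and the automatic invertibility of lifts both enter — and the telescoping of finite products at the end, together with a sanity check that the degenerate range $n < l_{H}$ stays consistent. Alternatively one may argue more directly, closer to Friedman and Washington's original reasoning: normalize $\ol{C}$ to $\mathrm{diag}(I_{n-l_{H}}, 0)$, use invertible row and column operations over $R_{N}$ to bring a lift of $\ol{C}$ to the shape $\mathrm{diag}(I_{n-l_{H}}, \pi D)$, and count the matrices $D$ with $\coker_{R/\mf{m}^{N}}(D)$ of the required isomorphism type using Proposition \ref{FW} once more. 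That route also works, but it needs the identity $|\Aut_{R}(H)| = q^{l_{H}^{2}}\big(\prod_{i=1}^{l_{H}-l_{H'}}(1-q^{-i})\big)|\Aut_{R}(H')|$, where $H'$ is obtained from $H$ by deleting the first column of its Young diagram, so I would favor the first route, which requires no automorphism count beyond Proposition \ref{FW} itself.
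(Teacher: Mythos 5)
Your proposal is correct, and it is worth noting that the paper itself does not prove Lemma \ref{count} at all: it is quoted from Friedman--Washington ($\#_{H}(\bar{R})$ on p.~236 of \cite{FW}), whose original argument is a direct count of lifts, close in spirit to the alternative route you sketch at the end. Your main route is genuinely different: you translate by $aI_{n}$ to reduce to lifts of $\ol{C}=P(\ol{A})$, use reduction mod $\mf{m}$ of a presentation to kill the case $\dim_{\bF_q}\coker(\ol{C})\neq l_H$, observe that the fiber count is constant on matrices of fixed corank via the two-sided action of lifted units (invertibility of lifts over the local ring $R/\mf{m}^{N+1}$ being automatic), and then solve for the common value by equating the global count, evaluated through Lemma \ref{Haar} and Proposition \ref{FW}, with the classical corank-$l_H$ count over $\bF_q$; the telescoping of the finite products and the exponent bookkeeping check out, and the range $n<l_H$ is vacuous for the first branch, so dividing by the number of corank-$l_H$ matrices is legitimate exactly when needed. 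In effect you run in reverse the implication recorded in the paper's remark after Corollary \ref{FW2} (Lemma \ref{count} plus the corank count $\Rightarrow$ Proposition \ref{FW} for all $n$); this is not circular, since Proposition \ref{FW} is Proposition 1 of \cite{FW}, proved there by a different method, as the paper itself points out. What your approach buys is a proof of the lemma using only statements already quoted in the paper, with no automorphism-counting beyond Proposition \ref{FW}; what the direct Friedman--Washington-style count buys instead is logical independence from Proposition \ref{FW}, which is why the paper can cite the lemma and then note that it re-proves Proposition \ref{FW}.
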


\

where $l_{H} := \dim_{\bF_{q}}(H/\mf{m}H)$.

\

\hspace{3mm} We will use another lemma due to Cohen and Lenstra (Theorem 6.3 in \cite{CL} with $u = 0$) as follows.

\

\begin{lem}[Cohen and Lenstra]\label{CL2} Let $(R, \mf{m})$ be a complete DVR with $R/\mf{m} = \bF_{q}$. For any $l \in \bZ_{\geq 0}$, we have

$$\Prob_{H \in \Mod_{R}^{<\infty}}(\dim_{\bF_{q}}(H/\mf{m}H) = l) = \frac{q^{-l^{2}}\prod_{i=1}^{\infty}(1 - q^{-i})}{\prod_{i=1}^{l}(1 - q^{-i})^{2}}$$

\

with respect to the Cohen-Lenstra distribution on $\Mod_{R}^{<\infty}$.
\end{lem}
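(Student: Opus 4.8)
The statement to prove is a purely combinatorial identity about the Cohen-Lenstra distribution on $\Mod_{R}^{<\infty}$, so the plan is to reduce everything to a computation with partitions. Recall that for $(R,\mf{m})$ a complete DVR with residue field $\bF_{q}$, every finite length module $H$ corresponds to a unique partition $\ld(H)$, and $|\Aut_{R}(H)| = w(q,\ld)$ depends only on $q$ and $\ld$. The quantity $\dim_{\bF_{q}}(H/\mf{m}H)$ is just the number of parts of $\ld$, i.e. the length $\ell(\ld)$ of the partition (equivalently $\ld'_{1}$, the first part of the conjugate partition). So the left-hand side is
$$\left(\prod_{i=1}^{\infty}(1-q^{-i})\right)\sum_{\substack{\ld \in \mc{P} \\ \ell(\ld) = l}} \frac{1}{w(q,\ld)},$$
and the task is to show this equals $q^{-l^{2}}\prod_{i=1}^{\infty}(1-q^{-i}) \big/ \prod_{i=1}^{l}(1-q^{-i})^{2}$, i.e. that $\sum_{\ell(\ld)=l} w(q,\ld)^{-1} = q^{-l^{2}}\prod_{i=1}^{l}(1-q^{-i})^{-2}$.

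First I would record Macdonald's explicit formula for $w(q,\ld)$. Writing the conjugate partition as $\ld' = (\ld'_1 \geq \ld'_2 \geq \cdots)$ with $\ld'_1 = l$ fixed, Macdonald's formula (p.181 of \cite{Mac}) expresses $w(q,\ld)^{-1}$ as $q^{-\sum (\ld'_i)^2}$ times a product of the form $\prod_i \prod_{k=1}^{m_i(\ld)} (1 - q^{-k})$ where $m_i(\ld)$ are the multiplicities of the parts of $\ld$. The key point is that partitions with $\ell(\ld) = l$ are in bijection (via conjugation) with partitions $\mu = \ld'$ whose largest part is exactly $l$; such $\mu$ are in turn in bijection with arbitrary partitions $\nu$ with all parts $\leq l$ by deleting one part equal to $l$ — but more usefully, partitions with largest part $\le l$ are counted by the generating function $\prod_{i=1}^{l}(1-x^i)^{-1}$. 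The plan is to set up the sum over such $\mu$, substitute Macdonald's formula, and recognize the result. The factor $q^{-l^2}$ should come out as the contribution of the unique part of $\mu$ equal to $l$ (it contributes $q^{-l^2}$ to $q^{-\sum \mu_i^2}$ when we peel off that part).

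Rather than grinding through Macdonald's formula directly, the cleaner route I would take is to invoke Lemma \ref{key} (the Hall–Littlewood / Cohen–Lenstra identity already cited in the excerpt, used with $y=1$ to show the distribution sums to $1$) in the refined form that tracks the statistic $\ell(\ld)$. Concretely, Cohen and Lenstra's Theorem 6.3 is exactly a weighted version of the identity $\sum_{\ld} u^{\ell(\ld)} w(q,\ld)^{-1} = \prod_{i\ge 1}(1-q^{-i})^{-1} \cdot (\text{something in } u)$; extracting the coefficient of $u^l$ and multiplying by $\prod_{i\ge 1}(1-q^{-i})$ gives the claim. So the concrete steps are: (1) state the generating-function identity $\sum_{\ld \in \mc{P}} u^{\ell(\ld)} w(q,\ld)^{-1} = \prod_{i=1}^{\infty}(1-q^{-i})^{-1}\prod_{i=1}^{\infty}(1 + (u-1)q^{-i}\cdots)$ in the precise normalization that matches \cite{CL}; (2) extract the coefficient of $u^l$; (3) simplify to $q^{-l^2}\prod_{i=1}^{l}(1-q^{-i})^{-2}$; (4) multiply by $\prod_{i=1}^{\infty}(1-q^{-i})$ to get the stated probability.

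The main obstacle is bookkeeping: getting Macdonald's automorphism formula and the resulting multi-sum over partitions normalized correctly, and verifying that the $u$-dependent factor really collapses so that the coefficient of $u^l$ is the clean closed form $q^{-l^2}\prod_{i=1}^l(1-q^{-i})^{-2}$ rather than something messier. This is a known identity of Cohen–Lenstra, so in practice I would cite Theorem 6.3 of \cite{CL} (with $u=0$ in their notation, as the excerpt already flags) and only sketch the partition-counting translation, since reproving it from scratch adds length without insight. If a self-contained argument is wanted, an induction on $l$ using a recursion for $\sum_{\ell(\ld)=l} w(q,\ld)^{-1}$ obtained by removing the first column of $\ld$ would also work, with the base case $l=0$ (only $\ld = \emptyset$, $w=1$) trivial.
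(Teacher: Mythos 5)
The paper does not actually prove this lemma: it is quoted verbatim from Cohen--Lenstra (Theorem 6.3 of \cite{CL} with $u=0$), and your bottom-line recommendation --- reduce to a partition identity and then cite that theorem --- is exactly what the paper does, so there is no conflict of approach. Your reduction is correct: $\dim_{\bF_q}(H/\mf{m}H)$ is the number of parts $\ell(\ld)$ of $\ld(H)$, and the lemma is equivalent to $\sum_{\ell(\ld)=l} w(q,\ld)^{-1} = q^{-l^2}\prod_{i=1}^{l}(1-q^{-i})^{-2}$, whose sum over $l$ recovers $\prod_{i\ge1}(1-q^{-i})^{-1}$ via the classical Durfee-square identity $\prod_{i\ge1}(1-x^i)^{-1}=\sum_{l\ge0}x^{l^2}\prod_{i=1}^{l}(1-x^i)^{-2}$; that is the identity you are really refining. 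Two cautions about your self-contained sketches. First, the generating-function identity you write in step (1) is left in an unfinished form ("$\prod(1+(u-1)q^{-i}\cdots)$"), and in fact $\sum_{\ld}u^{\ell(\ld)}w(q,\ld)^{-1}$ does not collapse to a clean infinite product in $u$; the clean statement is precisely the coefficient-of-$u^l$ claim itself. Second, Lemma \ref{key} as stated tracks $|\nu|$, not $\ell(\nu)$, so it cannot simply be invoked "in refined form" --- one must either rerun Stong's argument with the extra statistic or work directly from Macdonald's formula. Your fallback (induction on $l$ by peeling off the first column of $\ld$, i.e.\ the largest part of $\ld'$, which contributes the $q^{-l^2}$) does go through, but it requires verifying a $q$-series recursion and is genuine bookkeeping, as you acknowledge; citing \cite{CL} is the economical choice and matches the paper.
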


\

\begin{proof}[Proof that Theorem \ref{main2} implies Theorem \ref{main3x}] Let $l_{H} := \dim_{\bF_{q}}(H/\mf{m}H)$ and choose $N \in \bZ_{\geq 0}$ such that $\mf{m}^{N}H = 0$. Similarly arguing as in the proof of Lemma \ref{Haar}, we can observe that the preimage of the set

$$\left\{
\begin{array}{c}
\ol{A} \in \Mat_{n}(\bF_{q}) : \\
\ol{A}[\ol{P}_{j}^{\infty}] = 0 \text{ for } 1 \leq j \leq r-1
\end{array}
\right\}
=
\left\{
\begin{array}{c}
\ol{A} \in \Mat_{n}(\bF_{q}) : \\
\coker(P_{j}(\ol{A})) = 0 \text{ for } 1 \leq j \leq r-1
\end{array}
\right\}$$

\

under the projection $\Mat_{n}(R/\mf{m}^{N+1}) \tra \Mat_{n}(\bF_{q})$ modulo $\mf{m}$ is precisely

$$\left\{
\begin{array}{c}
A \in \Mat_{n}(R/\mf{m}^{N+1}) : \\
\coker(P_{j}(A)) = 0 \text{ for } 1 \leq j \leq r-1
\end{array}
\right\}.$$

\

Since 

$$\dim_{\bF_{q}}(\coker(P_{r}(\ol{A}))) = \dim_{\bF_{q}}(\ker(P_{r}(\ol{A}))) = \dim_{\bF_{q}}(\ol{A}[\ol{P}_{r}^{\infty}]/\ol{P}_{r}\ol{A}[\ol{P}_{r}^{\infty}]),$$

\

applying Lemma \ref{count} implies that

\begin{align*}
\left|\left\{
\begin{array}{c}
A \in \Mat_{n}(R/\mf{m}^{N+1}) : \\
\coker(P_{j}(A)) = 0 \text{ for } 1 \leq j \leq r-1, \\
\coker(P_{r}(A)) \simeq H
\end{array}
\right\}\right| = \frac{q^{Nn^{2} +l_{H}^{2}} \prod_{i=1}^{l_{H}}( 1 - q^{-i} )^{2}}{ |\Aut_{R}(H)| } \left|\left\{
\begin{array}{c}
\ol{A} \in \Mat_{n}(\bF_{q}) : \\
\ol{A}[\ol{P}_{j}^{\infty}] = 0 \text{ for } 1 \leq j \leq r-1, \\
\dim_{\bF_{q}}(\ol{A}[\ol{P}_{r}^{\infty}]/\ol{P}_{r}\ol{A}[\ol{P}_{r}^{\infty}]) = l_{H}
\end{array}
\right\}\right|,
\end{align*}

\

so dividing by $q^{(N+1)n^{2}} = |\Mat_{n}(R/\mf{m}^{N+1})|$, we have

\begin{align*}
&\Prob_{A \in \Mat_{n}(R/\mf{m}^{N+1})}\left(
\begin{array}{c}
\coker(P_{j}(A)) = 0 \text{ for } 1 \leq j \leq r-1, \\
\coker(P_{r}(A)) \simeq H
\end{array}
\right) \\
&= \frac{q^{l_{H}^{2}}\prod_{i=1}^{l_{H}}( 1 - q^{-i} )^{2}}{ q^{n^{2}}|\Aut_{R}(H)| } \left|\left\{
\begin{array}{c}
\ol{A} \in \Mat_{n}(\bF_{q}) : \\
\ol{A}[\ol{P}_{j}^{\infty}] = 0 \text{ for } 1 \leq j \leq r-1, \\
\dim_{\bF_{q}}(\ol{A}[\ol{P}_{r}^{\infty}]/\ol{P}_{r}\ol{A}[\ol{P}_{r}^{\infty}]) = l_{H}
\end{array}
\right\}\right| \\
&= \frac{q^{l_{H}^{2}}\prod_{i=1}^{l_{H}}( 1 - q^{-i} )^{2}}{ |\Aut_{R}(H)| } \Prob_{\ol{A} \in \Mat_{n}(\bF_{q})}\left(
\begin{array}{c}
\ol{A}[\ol{P}_{j}^{\infty}] = 0 \text{ for } 1 \leq j \leq r-1, \\
\dim_{\bF_{q}}(\ol{A}[\ol{P}_{r}^{\infty}]/\ol{P}_{r}\ol{A}[\ol{P}_{r}^{\infty}]) = l_{H}
\end{array}
\right).
\end{align*}

\

Hence, applying Theorem \ref{main2} and Lemma \ref{CL2}, this leads to

\begin{align*}
\lim_{n \ra \infty}&\Prob_{A \in \Mat_{n}(R/\mf{m}^{N+1})}\left(
\begin{array}{c}
\coker(P_{j}(A)) = 0 \text{ for } 1 \leq j \leq r-1, \\
\coker(P_{r}(A)) \simeq H
\end{array}
\right) \\
&= \frac{q^{l_{H}^{2}}\prod_{i=1}^{l_{H}}( 1 - q^{-i} )^{2}}{ |\Aut_{R}(H)| } \cdot \frac{q^{-l_{H}^{2}}\prod_{i=1}^{\infty}(1 - q^{-i})}{\prod_{i=1}^{l_{H}}(1 - q^{-i})^{2}} \cdot \prod_{j=1}^{r-1}\prod_{i=1}^{\infty}(1-q^{-i\deg(P_{j})}) \\
&= \frac{1}{ |\Aut_{R}(H)| } \prod_{j=1}^{r}\prod_{i=1}^{\infty}(1-q^{-i\deg(P_{j})}),
\end{align*}

\

noting that $\deg(P_{r}) = 1$. This finishes the proof.
\end{proof}

\

\section{Boreico's formula}

\hspace{3mm} We now introduce a formula due to Boreico, appearing in his proof of Theorem \ref{main1} (or Theorem 3.8.18 in \cite{Bor}). We will use this formula to see that Theorem \ref{main1} and Theorem \ref{main2} give no more information than proving Theorem \ref{main1x} and Theorem \ref{main2x}. Any reader who only cares about proofs of our main theorems (Theorems \ref{main1x}, \ref{main2x}, and \ref{main3x} as well as Theorems \ref{main1} and \ref{main2}) can skip this section because merely proving them will not require Boreico's formula. However, the remark following Lemma \ref{Bor} explains how the formula can be used to prove a special case of Theorem \ref{main1}.

\

\begin{lem}[Boreico]\label{Bor} Fix any distinct monic irreducible polynomials $\ol{P}_{1}(t), \dots, \ol{P}_{r}(t) \in \bF_{q}[t]$. For $1 \leq j \leq r$, fix a finite $\ol{P}_{j}^{\infty}$-torsion module $H_{j}$ over $\bF_{q}[t]$ and let $h_{j} := \dim_{\bF_{q}}(H_{j})$. If $n \geq h_{1} + \cdots + h_{r}$, then we have

\begin{align*}
&\Prob_{\ol{A} \in \Mat_{n-(h_{1} + \cdots + h_{r})}(\bF_{q})}\left(\begin{array}{c}
\ol{A}[\ol{P}_{j}^{\infty}] = 0 \\
\text{ for } 1 \leq j \leq r
\end{array}\right) \\
&= \left(\frac{|\Aut_{\bF_{q}[x]}(H_{1})| \cdots |\Aut_{\bF_{q}[x]}(H_{r})|}{\prod_{i=n-(h_{1} + \cdots + h_{r}) + 1}^{n}(1 - q^{-i})}\right) \Prob_{\ol{A} \in \Mat_{n}(\bF_{q})}\left(\begin{array}{c}
\ol{A}[\ol{P}_{j}^{\infty}] \simeq H_{j} \\
\text{ for } 1 \leq j \leq r
\end{array}\right).
\end{align*}
\end{lem}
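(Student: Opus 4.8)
The plan is to establish a bijection between the relevant sets of matrices over $\bF_q$, much in the spirit of the classical argument that computes the distribution of the $\ol{P}$-part of a random matrix. First I would fix distinct monic irreducibles $\ol{P}_1, \dots, \ol{P}_r$ and finite torsion modules $H_j$ with $h_j := \dim_{\bF_q}(H_j)$, and set $h := h_1 + \cdots + h_r$ and $m := n - h$. The key observation is that for a matrix $\ol{A} \in \Mat_n(\bF_q)$, the condition $\ol{A}[\ol{P}_j^\infty] \simeq H_j$ for all $j$ amounts to specifying, inside $\bF_q^n$ viewed as an $\bF_q[t]$-module via $\ol{A}$, the isotypic components at each $\ol{P}_j$ to be isomorphic to $H_j$, and leaving the ``rest'' of the module (the part supported away from $\ol{P}_1, \dots, \ol{P}_r$) completely unconstrained except that its dimension is forced to be $m$. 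So the plan is to count such matrices by choosing: (i) an $\bF_q[t]$-submodule $V \subseteq \bF_q^n$ isomorphic to $H_1 \oplus \cdots \oplus H_r$ that is the full $\ol{P}_j$-isotypic part for each $j$; (ii) a complementary submodule $W$ of dimension $m$ on which $\ol{A}$ acts with $W[\ol{P}_j^\infty] = 0$ for all $j$. Because the $\ol{P}_j$ are coprime, the module structure splits canonically as a direct sum of the isotypic piece and the ``$\ol{P}$-free'' piece, so this decomposition is unique once $\ol{A}$ is given.

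The next step is to count. The number of ways to equip a fixed $m$-dimensional space with an $\bF_q[t]$-module structure having trivial $\ol{P}_j^\infty$-torsion for all $j$ is $|\GL_m(\bF_q)| \cdot \Prob_{\ol{B} \in \Mat_m(\bF_q)}(\ol{B}[\ol{P}_j^\infty] = 0 \text{ for all } j)$, since such structures correspond to $\GL_m(\bF_q)$-conjugacy-weighted matrices; and similarly the number of ways to put a module structure isomorphic to $\bigoplus_j H_j$ on a fixed $h$-dimensional space is $|\GL_h(\bF_q)| / \prod_j |\Aut_{\bF_q[t]}(H_j)|$, using the orbit–stabilizer theorem together with $\Aut_{\bF_q[t]}(\bigoplus_j H_j) = \prod_j \Aut_{\bF_q[t]}(H_j)$ (again by coprimality of supports). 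Then one must count the number of decompositions $\bF_q^n = V \oplus W$ with $\dim V = h$, $\dim W = m$, as an $\bF_q$-vector space: this is a flag-type count, and it is cleanest to instead count ordered pairs consisting of an embedding of the $h$-dimensional module and an embedding of the $m$-dimensional module with complementary images, or more simply to assemble everything via a block decomposition and a choice of basis, which introduces the factor $|\GL_n(\bF_q)| / (|\GL_h(\bF_q)| \cdot |\GL_m(\bF_q)|)$ times the appropriate power of $q$ accounting for off-diagonal freedom — but here the module decomposition is forced, so there is no off-diagonal freedom and one just gets the ratio of general linear group orders. Collecting the factors gives
\begin{align*}
|\{\ol{A} \in \Mat_n(\bF_q) : \ol{A}[\ol{P}_j^\infty] \simeq H_j \ \forall j\}| = \frac{|\GL_n(\bF_q)|}{\prod_j |\Aut_{\bF_q[t]}(H_j)|} \cdot \Prob_{\ol{B} \in \Mat_m(\bF_q)}(\ol{B}[\ol{P}_j^\infty] = 0 \ \forall j).
\end{align*}
Dividing both sides by $|\Mat_n(\bF_q)| = q^{n^2}$, using $|\GL_n(\bF_q)|/q^{n^2} = \prod_{i=1}^n (1 - q^{-i})$, and rearranging yields exactly the claimed identity, since $\prod_{i=1}^n(1-q^{-i}) = \prod_{i=1}^m(1-q^{-i}) \cdot \prod_{i=m+1}^n(1-q^{-i})$ and the factor $|\GL_m(\bF_q)|/q^{m^2}$ from the $\Mat_m$ probability reconstitutes $\prod_{i=1}^m(1-q^{-i})$ on the other side.

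The main obstacle I anticipate is making the counting of the module decompositions rigorous — specifically, justifying that a choice of $\ol{A}$ with the prescribed isotypic parts is \emph{equivalent} to the independent data of (a) the isomorphism type of the ambient module with its canonical primary decomposition, (b) a choice of $\bF_q$-basis adapted to that decomposition, modulo the automorphisms. The cleanest route is probably to work $\GL_n(\bF_q)$-equivariantly: parametrize $\{\ol{A} : \ol{A}[\ol{P}_j^\infty]\simeq H_j\}$ by pairs (module $M$ of dimension $n$ with $M[\ol{P}_j^\infty] \simeq H_j$, $\bF_q$-isomorphism $\bF_q^n \xrightarrow{\sim} M$), then use that $M$ splits uniquely as $H_1 \oplus \cdots \oplus H_r \oplus M'$ with $M'$ having no $\ol{P}_j$-torsion, and count via orbit–stabilizer on each factor. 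One subtlety to handle carefully is that the dimension $m = n - h$ of $M'$ is determined, so the "$\Prob_{\ol{B} \in \Mat_m}$" on the right is over the correct size; another is to double-check the automorphism bookkeeping, i.e. that $|\Aut_{\bF_q[t]}(M)| = |\Aut_{\bF_q[t]}(M')| \cdot \prod_j |\Aut_{\bF_q[t]}(H_j)|$ with no cross terms, which holds precisely because $\Hom_{\bF_q[t]}(H_i, M')$, $\Hom_{\bF_q[t]}(M', H_j)$, and $\Hom_{\bF_q[t]}(H_i, H_j)$ for $i \neq j$ all vanish by support disjointness. Once this bookkeeping is set up, the rest is the routine algebra indicated above.
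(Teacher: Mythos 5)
Your strategy is essentially the paper's, packaged slightly differently. The paper fixes only the invariant isotypic subspace $V$ together with its module structure $\phi$ (counted by orbit--stabilizer on $\Inj_{\bF_q}(H,\bF_q^n)$, giving $(q^n-1)\cdots(q^n-q^{h-1})/|\Aut_{\bF_q[t]}(H)|$ pairs), and then counts extensions $\psi$ of $\phi$ as block upper-triangular matrices $\begin{bmatrix} H & B \\ 0 & C\end{bmatrix}$, picking up an explicit off-diagonal factor $q^{h(n-h)}$ from the arbitrary block $B$. You instead use the canonical primary (Fitting) decomposition to fix both invariant summands $V\oplus W$ at once, so the off-diagonal freedom is traded for the larger count $|\GL_n|/(|\GL_h||\GL_m|)$ of ordered decompositions. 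These are equivalent parametrizations (the number of ordered decompositions equals the number of $h$-dimensional subspaces times $q^{h(n-h)}$), and your remarks on the vanishing of the cross Hom's and on $\Aut_{\bF_q[t]}(H)=\prod_j\Aut_{\bF_q[t]}(H_j)$ are correct; so the conceptual content is the same, and if anything your version makes the bijection more transparent.

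There is, however, a concrete bookkeeping error: your displayed identity is off by a factor of $\prod_{i=1}^{m}(1-q^{-i})$, where $m=n-h$. The number of admissible module structures on a fixed $m$-dimensional space $W$ is the number of matrices $\ol{B}\in\Mat_m(\bF_q)$ with $\ol{B}[\ol{P}_j^{\infty}]=0$ for all $j$, which is $q^{m^2}\cdot\Prob_{\ol{B}\in\Mat_m(\bF_q)}(\cdots)$, not $|\GL_m(\bF_q)|\cdot\Prob_{\ol{B}\in\Mat_m(\bF_q)}(\cdots)$ as you assert. With the correct count, the product of your three factors is
\[
\frac{|\GL_n(\bF_q)|}{|\GL_h(\bF_q)||\GL_m(\bF_q)|}\cdot\frac{|\GL_h(\bF_q)|}{|\Aut_{\bF_q[t]}(H)|}\cdot q^{m^2}\,\Prob_{\ol{B}\in\Mat_m(\bF_q)}(\cdots)
=\frac{|\GL_n(\bF_q)|\,q^{m^2}}{|\GL_m(\bF_q)|\,|\Aut_{\bF_q[t]}(H)|}\,\Prob_{\ol{B}\in\Mat_m(\bF_q)}(\cdots),
\]
and dividing by $q^{n^2}$ gives exactly $\prod_{i=m+1}^{n}(1-q^{-i})/|\Aut_{\bF_q[t]}(H)|$ times the $\Mat_m$ probability, as the lemma requires. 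As written, your displayed equation would instead yield the false identity with $\prod_{i=1}^{n}(1-q^{-i})$ in place of $\prod_{i=m+1}^{n}(1-q^{-i})$, and the closing sentence about the factor $|\GL_m(\bF_q)|/q^{m^2}$ ``reconstituting'' $\prod_{i=1}^{m}(1-q^{-i})$ does not cohere with the equation it is meant to justify. This is a fixable slip rather than a gap in the idea, but the central formula needs the extra factor $q^{m^2}/|\GL_m(\bF_q)|$ for the algebra to close.
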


\

\begin{rmk} Lemma \ref{Bor} reduces Theorem \ref{main2} to the special case where $H_{1} = \cdots = H_{r} = 0$, and it can be similarly applied to reduce the task of proving Theorem \ref{main1}. In particular, if $r = 1$ and $\ol{P}_{1}(t) = t$, then writing $H = H_{1}$ and $h = h_{1} \leq n$, we can apply Lemma \ref{Bor} to compute

\begin{align*}
\Prob_{\ol{A} \in \Mat_{n}(\bF_{q})}(\ol{A}[t^{\infty}] \simeq H) &= \frac{1}{|\Aut_{\bF_{q}[t]}(H)|}\Prob_{\ol{A} \in \Mat_{n-h}(\bF_{q})}(\ol{A}[t^{\infty}] = 0)\prod_{i=n-h+1}^{n}(1 - q^{-i}) \\
& = \frac{1}{|\Aut_{\bF_{q}[t]}(H)|}\frac{|\GL_{n-h}(\bF_{q})|}{|\Mat_{n-h}(\bF_{q})|}\prod_{i=n-h+1}^{n}(1 - q^{-i}) \\
& = \frac{1}{|\Aut_{\bF_{q}[t]}(H)|}\prod_{i=1}^{n}(1 - q^{-i}),
\end{align*}

\

which proves a special case of Theorem \ref{main1}.
\end{rmk}

\

\hspace{3mm} We have seen that Theorem \ref{main1} and Theorem \ref{main2} imply Theorem \ref{main1x} and Theorem \ref{main2x}. The following corollary of the above formula lets us see that the converse can be easily achieved.

\

\begin{cor}\label{formula} Let $P_{1}(t), \dots, P_{r}(t) \in R[t]$ be monic polynomials such that the reduction modulo $\mf{m}$ gives distinct irreducible polynomials $\ol{P}_{1}(x), \dots, \ol{P}_{r}(x) \in \bF_{q}[x]$. For $1 \leq j \leq r$, fix a finite $\ol{P}_{j}^{\infty}$-torsion module $H_{j}$ over $\bF_{q}[x]$ and let $h_{j} := \dim_{\bF_{q}}(H_{j})$. If $n \geq h_{1} + \cdots + h_{r}$, then we have

\begin{align*}
&\Prob_{A \in \Mat_{n-(h_{1} + \cdots + h_{r})}(R)}\left(\begin{array}{c}
\coker(P_{j}(A)) = 0 \\
\text{ for } 1 \leq j \leq r
\end{array}\right) \\
&= \left(\frac{|\Aut_{\bF_{q}[x]}(H_{1})| \cdots |\Aut_{\bF_{q}[x]}(H_{r})|}{\prod_{i=n-(h_{1} + \cdots + h_{r}) + 1}^{n}(1 - q^{-i})}\right) \Prob_{\ol{A} \in \Mat_{n}(\bF_{q})}\left(\begin{array}{c}
\ol{A}[\ol{P_{j}}^{\infty}] \simeq H_{j} \\
\text{ for } 1 \leq j \leq r
\end{array}\right).
\end{align*}
\end{cor}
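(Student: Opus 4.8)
The plan is to deduce Corollary \ref{formula} directly from Lemma \ref{Bor} by translating the left-hand side, which concerns a Haar-random matrix over $R$, into a statement about a uniformly random matrix over $\bF_q$; the right-hand side is already phrased over $\bF_q$ and is left untouched. The key observation is that for any $\ol{A} \in \Mat_n(\bF_q)$ and any monic $P_j(t) \in R[t]$ reducing to $\ol{P}_j(t)$, the condition $\coker(P_j(\ol{A})) = 0$ is equivalent to $P_j(\ol{A}) = \ol{P}_j(\ol{A})$ being invertible, which is in turn equivalent to $\ol{A}[\ol{P}_j^\infty] = 0$ (this is exactly the equivalence used in the proof that Theorems \ref{main1} and \ref{main2} imply Theorems \ref{main1x} and \ref{main2x}).

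First I would apply Lemma \ref{Haar} with $N = 0$ and the polynomials $P_1, \dots, P_r$ to the matrix size $m := n - (h_1 + \cdots + h_r)$, which (since $H_1 = \cdots = H_r = 0$ forces $\mf{m}^0 H_j = 0$) gives
\[
\Prob_{A \in \Mat_{m}(R)}\left(\coker(P_j(A)) = 0 \text{ for } 1 \leq j \leq r\right) = \Prob_{\ol{A} \in \Mat_{m}(\bF_q)}\left(\coker(P_j(\ol{A})) = 0 \text{ for } 1 \leq j \leq r\right).
\]
Next I would invoke the invertibility equivalence above to rewrite the right-hand probability as $\Prob_{\ol{A} \in \Mat_{m}(\bF_q)}(\ol{A}[\ol{P}_j^\infty] = 0 \text{ for } 1 \leq j \leq r)$. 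At this point the left-hand side of the desired identity has been expressed purely in terms of a uniformly random matrix over $\bF_q$ of size $m = n - (h_1 + \cdots + h_r)$ with all $\ol{P}_j$-parts vanishing, which is precisely the left-hand side of Lemma \ref{Bor}. Finally I would substitute the conclusion of Lemma \ref{Bor} to obtain the claimed formula, noting that the index of summation in the product $\prod_{i=n-(h_1+\cdots+h_r)+1}^{n}(1-q^{-i})$ and the automorphism factors match exactly.

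I do not expect any genuine obstacle here: the corollary is essentially a repackaging of Lemma \ref{Bor} through the dictionary between cokernels of $P_j(A)$ over $R$ and $\ol{P}_j$-parts over $\bF_q$, and the only care needed is to make sure the polynomials $P_j$ over $R$ and their reductions $\ol{P}_j$ over $\bF_q$ are matched correctly and that $n \geq h_1 + \cdots + h_r$ is exactly the hypothesis under which Lemma \ref{Bor} applies. The mild bookkeeping point worth stating explicitly is why $\coker(P_j(\ol A)) = 0 \iff \ol A[\ol P_j^\infty] = 0$: over the finite-dimensional $\bF_q$-vector space on which $\ol A$ acts, $\ol P_j(\ol A)$ is surjective iff it is injective iff it is invertible, and injectivity of $\ol P_j(\ol A)$ is equivalent to the $\ol P_j$-primary component of the $\bF_q[t]$-module $\ol A \acts \bF_q^m$ being trivial. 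That remark, plus the two displayed equations above, constitutes the whole proof.
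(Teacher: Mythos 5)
Your proof is correct and is exactly the paper's argument: the paper's one-line proof is to apply Lemma \ref{Haar} with $N=0$ to Lemma \ref{Bor}, and your write-up simply spells out the intermediate translation $\coker(P_j(\ol A))=0 \iff \ol A[\ol P_j^\infty]=0$ that the paper leaves implicit (it appears earlier in the reduction of Theorems \ref{main1x} and \ref{main2x}). No gaps; the bookkeeping on the matrix size $n-(h_1+\cdots+h_r)$ and the hypothesis $n \geq h_1+\cdots+h_r$ is handled correctly.
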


\begin{proof} This follows from applying Lemma \ref{Haar} with $N = 0$ to Lemma \ref{Bor}.
\end{proof}

\

\hspace{3mm} We now prove Lemma \ref{Bor}. This proof is due to Boreico (p.109 of \cite{Bor}).

\

\begin{proof}[Proof of Lemma \ref{Bor}]  In this proof, we write $A \in \Mat_{n}(\bF_{q})$ instead of $\ol{A}$ and $P_{j}$ replacing $\ol{P}_{j}$ for the sake of convenience. Let $H := H_{1} \op \cdots \op H_{r}$, and $h := \dim_{\bF_{q}}(H) = h_{1} + \cdots + h_{r}$. The key observation is that the number of $A \in \Mat_{n}(\bF_{q})$ such that $A[P_{j}^{\infty}] \simeq H_{j}$ for $1 \leq j \leq r$ is equal to the number of triples $(V, \phi, \psi)$ where

\begin{itemize}
	\item $V$ is an $\bF_{q}$-linear subspace of $\bF_{q}^{n}$ with dimension $h$;
	\item $\phi \in \End_{\bF_{q}}(V)$ such that $(\phi \acts V) \simeq H$ as $\bF_{q}[t]$-modules;
	\item $\psi \in \End_{\bF_{q}}(\bF_{q}^{n})$ such that $\psi|_{V} = \phi$ and

$$\bop_{i = 1}^{r}(\psi \acts \bF_{q}^{n})[P_{j}^{\infty}] \simeq (\phi \acts V)$$
\end{itemize}

\

as $\bF_{q}[t]$-modules, where the direct sum is internally taken in $\bF_{q}^{n}$. For any given $(V, \phi)$ with $(\phi \acts V) \simeq H$, the number of $\psi$ satisfying the above conditions is equal to the number of matrices of the form

$$\begin{bmatrix}
H & B \\
0 & C
\end{bmatrix},$$

\

where $H$ also means the $h \times h$ rational canonical form of the $\bF_{q}[t]$-module $H$, while $B$ is any $h \times (n - h)$ matrix and $C \in \Mat_{n - h}(\bF_{q})$ such that $P_{1}(C) \cdots P_{r}(C) \in \GL_{n-h}(\bF_{q})$. The number of such matrices is

$$q^{h(n-h)} |\{C \in \Mat_{n-h}(\bF_{q}) : C[P_{j}^{\infty}] = 0 \text{ for } 1 \leq j \leq r\}|.$$

\

It remains to count the number of $(V, \phi)$ described above. Given any $\bF_{q}$-linear injection $\alpha : H \hra \bF_{q}^{n}$, we may get such a pair by taking $V = \alpha(H)$ and $\phi = \alpha t \alpha^{-1}|_{V}$, where $t$ here means the $\bF_{q}$-linear endomorphism of $H$ given by the action of $t$. Every pair $(V, \phi)$ with $(\phi \acts V) \simeq H = (t \acts H)$ arises this way, and any two $\alpha, \beta \in \Inj_{\bF_{q}}(H, \bF_{q}^{n})$ give rise to the same pair precisely when

\begin{itemize}
	\item $\alpha(H) = \beta(H)$ (so that we call it $V$) and
	\item $\alpha t \alpha^{-1}|_{V} = \beta t \beta^{-1}|_{V}$.
\end{itemize}

The second condition can be restated as $\alpha^{-1}|_{V} \beta \in \Aut_{\bF_{q}[t]}(H)$. By taking $\eta = \alpha^{-1}|_{V} \beta$, we see that $\alpha, \beta \in \Inj_{\bF_{q}}(H, \bF_{q}^{n})$ give the same pair $(V, \phi)$ if and only if there is $\eta \in \Aut_{\bF_{q}[t]}(H)$ such that $\beta = \alpha\eta$. Thus, the set $\Inj_{\bF_{q}}(H, \bF_{q})/\Aut_{\bF_{q}[t]}(H)$ of orbits under the right action $\Inj_{\bF_{q}}(H, \bF_{q}) \righttoleftarrow \Aut_{\bF_{q}[t]}(H)$, given by the pre-composition, parametrizes the pairs $(V, \phi)$ such that $V$ is an $h$-dimensional subspace of $\bF_{q}^{n}$ and $(\phi \acts V) \simeq H$. This is a free action, so by Burnside's lemma, we have

$$|\Inj_{\bF_{q}}(H, \bF_{q})/\Aut_{\bF_{q}[t]}(H)| = \frac{|\Inj_{\bF_{q}}(H, \bF_{q})|}{|\Aut_{\bF_{q}[t]}(H)|} = \frac{1}{|\Aut_{\bF_{q}[t]}(H)|}(q^{n} - 1)(q^{n} - q) \cdots (q^{n}- q^{h-1})$$

\

because we have assumed that $n \geq h$. Combining altogether, we have

\begin{align*}
\left|\left\{
\begin{array}{c}
A \in \Mat_{n}(\bF_{q}) : \\
A[P_{j}^{\infty}] = H_{j} \text{ for } 1 \leq j \leq r
\end{array}
\right\}\right|
= \frac{q^{h(n-h)}(q^{n}-1)(q^{n}-q) \cdots (q^{n}-q^{h-1})}{|\Aut_{\bF_{q}[t]}(H)|}\left|\left\{
\begin{array}{c}
C \in \Mat_{n-h}(\bF_{q}) : \\
C[P_{j}^{\infty}] = 0 \text{ for } 1 \leq j \leq r
\end{array}
\right\}\right|.
\end{align*}

\

Dividing by $q^{n^{2}} = |\Mat_{n}(\bF_{q})|$, we get

\begin{align*}
\Prob_{A \in \Mat_{n}(\bF_{q})}
\left(
\begin{array}{c}
A[P_{j}^{\infty}] = H_{j} \\
\text{ for } 1 \leq j \leq r
\end{array}
\right)
&= \frac{q^{-(n-h)(n-h)}\prod_{i=n-h+1}^{n}(1-q^{-i})}{|\Aut_{\bF_{q}[t]}(H)|}
\left|\left\{
\begin{array}{c}
C \in \Mat_{n-h}(\bF_{q}) : \\
C[P_{j}^{\infty}] = 0 \text{ for } 1 \leq j \leq r
\end{array}
\right\}\right| \\
&= \frac{\prod_{i=n-h+1}^{n}(1-q^{-i})}{|\Aut_{\bF_{q}[t]}(H)|}
\Prob_{C \in \Mat_{n-h}(\bF_{q})}\left(\begin{array}{c}
C[P_{j}^{\infty}] = 0 \\
\text{ for } 1 \leq j \leq r
\end{array}
\right).
\end{align*}

\

Since $\Aut_{\bF_{q}[t]}(H) \simeq \Aut_{\bF_{q}[t]}(H_{1}) \times \cdots \times \Aut_{\bF_{q}[t]}(H_{r})$, this finishes the proof.
\end{proof}

\

\section{Useful lemmas for Theorem \ref{main1} and \ref{main2}} 

\hspace{3mm} Our main tool in proving Theorem \ref{main1} and Theorem \ref{main2} is a generating function that encodes information about similarity classes in $\Mat_{n}(\bF_{q})$.

\

\subsection{Cycle index}\label{ci} Every matrix $A \in \Mat_{n}(\bF_{q})$ gives rise to an $\bF_{q}[t]$-module structure on $\bF_{q}^{n}$, and up to an $\bF_{q}[t]$-isomorphism, it is

\[H_{P_{1}, \ld^{(1)}} \op \cdots \op H_{P_{r}, \ld^{(r)}}.\]

\

where $P_{i}(t) \in \bF_{q}[t]$ are monic irreducible polynomials and $\ld^{(i)} = (\ld_{1}^{(i)}, \dots, \ld_{l_{i}}^{(i)})$ are nonempty partitions with

$$H_{P_{i}, \ld^{(i)}} := \bF_{q}[t]/(P_{i}(t)^{\ld_{i,1}}) \op \cdots \op \bF_{q}[t]/(P_{i}(t)^{\ld_{i,l_{i}}})$$

\

as long as $n \geq 1$. For $n = 0$, we have $r = 0$, and this is consistent with the fact that we only have the zero module for this case. Up to a permutation, these $H_{P_{i}, \ld^{(i)}}$ characterize the similarity class of $A$. For any monic irreducible $P = P(t) \in \bF_{q}[t]$, we denote by $\mu_{P}(A)$ the partition associated to the $P$-part of $A$ or to the isomorphism class of the $\bF_{q}[t]$-module $A \acts \bF_{q}^{n}$. More specifically, in the above notation, we have

\[\mu_{P_{i}}(A) = \ld^{(i)} =  (\ld_{1}^{(i)}, \dots, \ld_{l_{i}}^{(i)})\]

\

and $\mu_{P}(A) = \es$ when $P \neq P_{i}$ for all $i$. Write $|\bA^{1}_{\bF_{q}}| = |\Spec(\bF_{q}[t])|$ to mean the set of all monic irreducible polynomials in $\bF_{q}[t]$. As the notation suggests, $|\bA^{1}_{\bF_{q}}|$ can be seen as the set of closed points of the affine line $\bA^{1}_{\bF_{q}} = \Spec(\bF_{q}[t])$ over $\bF_{q}$. For each nonempty partition $\nu$ and $P \in |\bA^{1}_{\bF_{q}}|$, we consider a formal variable $x_{P, \nu}$. For the empty partition $\es$, we put $x_{P, \es} := 1$. As in Section \ref{main}, we write $\mc{P}$ to mean the set of all partitions of non-negative integers, where the only partition for $0$ is $\es$.

\

\hspace{3mm} From the structure theorem about finitely generated modules over $\bF_{q}[t]$, which is a PID, and the Chinese remainder theorem, we note that for any two matrices $A, B \in \Mat_{n}(\bF_{q})$, the following are equivalent:

\be
	\item $A$ and $B$ are similar;
	\item $A$ and $B$ give the isomorphic $\bF_{q}[t]$-module structures on $\bF_{q}^{n}$;
	\item $A$ and $B$ are in the same orbit under the conjugate action $\GL_{n}(\bF_{q}) \acts \Mat_{n}(\bF_{q})$;
	\item $\mu_{P}(A) = \mu_{P}(B)$ for all $P \in |\bA^{1}_{\bF_{q}}|$;
	\item $\prod_{P \in |\bA^{1}_{\bF_{q}}|}x_{P,\mu_{P}(A)} = \prod_{P \in |\bA^{1}_{\bF_{q}}|}x_{P,\mu_{P}(B)}$.
\ee

\

\hspace{3mm} We define the \textbf{$n$-th cycle index of the conjugate action $\GL_{n}(\bF_{q}) \acts \Mat_{n}(\bF_{q})$} to be the polynomial

$$\mc{Z}([\Mat_{n}/\GL_{n}](\bF_{q}), \bs{x}) := \frac{1}{|\GL_{n}(\bF_{q})|}\sum_{A \in \Mat_{n}(\bF_{q})}\prod_{P \in |\bA^{1}_{\bF_{q}}|}x_{P, \mu_{P}(A)} \in \bQ[\bs{x}],$$

\

where $\bs{x} := (x_{P,\nu})$ is the sequence of formal variables $x_{P,\nu}$. We define the \textbf{$n$-th cycle index of the group $\GL_{n}(\bF_{q})$} by the analogous definition for the restricted conjugation action $\GL_{n}(\bF_{q}) \acts \GL_{n}(\bF_{q})$:

$$\mc{Z}(\GL_{n}(\bF_{q}), \bs{x}) := \frac{1}{|\GL_{n}(\bF_{q})|}\sum_{A \in \GL_{n}(\bF_{q})}\prod_{P \in |\bA^{1}_{\bF_{q}}|}x_{P, \mu_{P}(A)} \in \bQ[\bs{x}].$$

\

Notice that the irreducible polynomial $P(t) = t$ will not occur in the product above because for any $A \in \Mat_{n}(\bF_{q})$, saying that $A \in \GL_{n}(\bF_{q})$ is equivalent to saying $\mu_{t}(A) = \es$ (i.e., $A$ has no $t$-part).

\

\subsection{Useful lemmas} We will introduce three lemmas useful for proving Theorems \ref{main1} and \ref{main2}. The first one is due to Stong, who introduced the cycle index of the conjugate action $\GL_{n}(\bF_{q}) \acts \Mat_{n}(\bF_{q})$.

\

\begin{lem}[Lemma 1 in \cite{Sto}]\label{Sto} We have

\begin{align*}
\sum_{n=0}^{\infty}\mc{Z}([\Mat_{n}/\GL_{n}](\bF_{q}), \bs{x})u^{n} &= \sum_{n=0}^{\infty}\sum_{A \in \Mat_{n}(\bF_{q})}\left(\frac{\prod_{P \in |\bA^{1}_{\bF_{q}}|} x_{P, \mu_{P}(A)}}{|\GL_{n}(\bF_{q})|}\right) u^{n} \\ 
&=  \prod_{P \in |\bA^{1}_{\bF_{q}}|}\sum_{\nu \in \mc{P}}\frac{x_{P,\nu}u^{|\nu|\deg(P)}}{|\Aut_{\bF_{q}[t]}(H_{P,\nu})|}
\end{align*}

\

in $\bQ[\bs{x}]\llb u \rrb$.
\end{lem}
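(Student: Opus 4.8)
## Proof proposal for Lemma \ref{Sto} (Stong's cycle index identity)

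The plan is to verify the two claimed equalities separately. The first equality is essentially a bookkeeping identity: by definition of $\mc{Z}([\Mat_n/\GL_n](\bF_q),\bs{x})$, the expression $\sum_n \mc{Z}([\Mat_n/\GL_n](\bF_q),\bs{x})u^n$ is literally $\sum_n \left(\sum_{A \in \Mat_n(\bF_q)} \prod_P x_{P,\mu_P(A)}\right) u^n / |\GL_n(\bF_q)|$, so nothing is to prove there beyond unwinding notation. The real content is the second equality, which exhibits the generating function as an Euler product over closed points $P \in |\bA^1_{\bF_q}|$.

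For the second equality, first I would group the sum $\sum_{A \in \Mat_n(\bF_q)} \prod_P x_{P,\mu_P(A)}$ by similarity class. By the equivalences recalled just before the statement, similarity classes of $n \times n$ matrices are in bijection with tuples $(\mu_P)_{P \in |\bA^1_{\bF_q}|}$ of partitions, all but finitely many empty, satisfying the dimension constraint $\sum_P |\mu_P|\deg(P) = n$; and the similarity class indexed by $(\mu_P)_P$ has size $|\GL_n(\bF_q)| / |\Aut_{\bF_q[t]}(H)|$, where $H = \bigoplus_P H_{P,\mu_P}$ and the automorphism group is the centralizer $\Stab_{\GL_n(\bF_q)}(A)$ by the orbit–stabilizer theorem. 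Since $\prod_P x_{P,\mu_P(A)}$ is constant on the similarity class, we get
$$\frac{1}{|\GL_n(\bF_q)|}\sum_{A \in \Mat_n(\bF_q)} \prod_{P}x_{P,\mu_P(A)} = \sum_{\substack{(\mu_P)_P \\ \sum_P |\mu_P|\deg(P) = n}} \frac{\prod_P x_{P,\mu_P}}{\prod_P |\Aut_{\bF_q[t]}(H_{P,\mu_P})|},$$
using the multiplicativity $|\Aut_{\bF_q[t]}(H)| = \prod_P |\Aut_{\bF_q[t]}(H_{P,\mu_P})|$, which follows from the Chinese remainder theorem since the $P$-primary components are modules over the pairwise coprime local rings $\bF_q[t]_{(P)}$ (equivalently, $\Hom_{\bF_q[t]}(H_{P,\nu},H_{P',\nu'}) = 0$ for $P \neq P'$). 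Multiplying by $u^n$, summing over $n$, and observing that $\sum_P |\mu_P|\deg(P) = n$ lets us write $u^n = \prod_P u^{|\mu_P|\deg(P)}$, the sum over all finitely-supported tuples factors as the formal product $\prod_{P \in |\bA^1_{\bF_q}|}\left(\sum_{\nu \in \mc{P}} x_{P,\nu} u^{|\nu|\deg(P)} / |\Aut_{\bF_q[t]}(H_{P,\nu})|\right)$, which is exactly the claimed right-hand side.

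The step requiring the most care is making the passage from the sum over finitely-supported tuples to the infinite product rigorous in $\bQ[\bs{x}]\llb u \rrb$: one must check that the product converges in the $(u)$-adic topology, i.e. that each coefficient of $u^n$ involves only finitely many factors of the product in a nontrivial way. This holds because the number of closed points $P$ with $\deg(P) \leq n$ is finite, and any $P$ with $\deg(P) > n$ can only contribute its $\nu = \es$ term (which is $x_{P,\es} = 1$) to the coefficient of $u^n$; so the coefficient extraction is a genuine finite computation and the formal manipulation is legitimate. I would also note the boundary case $n = 0$, where both sides equal $1$, consistently with the empty product conventions $x_{P,\es} = 1$ and $|\Aut_{\bF_q[t]}(0)| = 1$. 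The rest is routine.
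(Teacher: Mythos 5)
Your proof is correct: the paper itself states this lemma without proof, citing Stong, and your argument (grouping by similarity classes, orbit--stabilizer to get class size $|\GL_n(\bF_q)|/|\Aut_{\bF_q[t]}(H)|$, multiplicativity of automorphism groups over primary components, and the $(u)$-adic convergence check for the Euler product) is exactly the standard derivation underlying the cited result. Nothing is missing.
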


\

\hspace{3mm} The result above proves the following lemma due to Kung, who introduced the cycle index of $\GL_{n}(\bF_{q})$:

\begin{lem}[Lemma 1 in \cite{Kun}]\label{Kun} We have

\begin{align*}\sum_{n=0}^{\infty}\mc{Z}(\GL_{n}(\bF_{q}), \bs{x})u^{n} &= \sum_{n=0}^{\infty}\sum_{A \in \GL_{n}(\bF_{q})} \left(\frac{\prod_{P \in |\bA^{1}_{\bF_{q}}|} x_{P, \mu_{P}(A)}}{|\GL_{n}(\bF_{q})|}\right) u^{n} \\
&= \prod_{\substack{P \in |\bA^{1}_{\bF_{q}}|, \\ P(t) \neq t}}\sum_{\nu \in \mc{P}}\frac{x_{P,\nu}u^{|\nu|\deg(P)}}{|\Aut_{\bF_{q}[t]}(H_{P,\nu})|}
\end{align*}

\

in $\bQ[\bs{x}]\llb u \rrb$.
\end{lem}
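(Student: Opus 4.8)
The plan is to derive Lemma \ref{Kun} from Lemma \ref{Sto} by a single specialization: set to zero all the formal variables $x_{t,\nu}$ attached to the closed point $P(t)=t$ with $\nu$ a nonempty partition. Concretely, I would work in the coefficient ring $\bQ[\bs x]$ of $\bQ[\bs x]\llb u\rrb$ and apply the $\bQ$-algebra endomorphism of $\bQ[\bs x]$ that sends $x_{t,\nu}\mapsto 0$ for every nonempty partition $\nu$ and fixes all other variables (recall $x_{t,\es}=1$ is a convention, not a variable). Since the coefficient of each $u^{n}$ in Lemma \ref{Sto} is a genuine polynomial in $\bs x$ — indeed a finite sum over the $q^{n^{2}}$ elements of $\Mat_{n}(\bF_{q})$ — this substitution is well-defined term by term, and the whole argument is just tracking what it does to the two sides of Lemma \ref{Sto}.

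On the left-hand side, a monomial $\prod_{P}x_{P,\mu_{P}(A)}$ survives the substitution exactly when $\mu_{t}(A)=\es$, which (as recorded at the end of Section \ref{ci}) is exactly the condition $A\in\GL_{n}(\bF_{q})$; and for such $A$ the surviving monomial is $\prod_{P(t)\neq t}x_{P,\mu_{P}(A)}$ because $x_{t,\es}=1$. As the normalizing denominator $|\GL_{n}(\bF_{q})|$ is literally the same in the definitions of $\mc{Z}([\Mat_{n}/\GL_{n}](\bF_{q}),\bs x)$ and $\mc{Z}(\GL_{n}(\bF_{q}),\bs x)$, the specialized left-hand side equals $\sum_{n\geq 0}\mc{Z}(\GL_{n}(\bF_{q}),\bs x)u^{n}$.

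On the right-hand side, the substitution touches only the Euler factor indexed by $P(t)=t$, namely $\sum_{\nu\in\mc{P}}x_{t,\nu}u^{|\nu|}/|\Aut_{\bF_{q}[t]}(H_{t,\nu})|$; after killing $x_{t,\nu}$ for $\nu\neq\es$ only the $\nu=\es$ term remains, and it equals $1$ since $H_{t,\es}=0$ and $x_{t,\es}=u^{0}=1$. Hence the specialized right-hand side is $\prod_{P(t)\neq t}\sum_{\nu\in\mc{P}}x_{P,\nu}u^{|\nu|\deg(P)}/|\Aut_{\bF_{q}[t]}(H_{P,\nu})|$, which is precisely the right-hand side of Lemma \ref{Kun}. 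Equating the two specialized sides, coefficient by coefficient in $u$, completes the proof.

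There is no real obstacle here — the content is entirely in Lemma \ref{Sto}. The only points I would spell out with a line of care are that the infinite product on the right of Lemma \ref{Sto} genuinely defines an element of $\bQ[\bs x]\llb u\rrb$ (for each power of $u$ only the finitely many factors with small $\deg(P)$ contribute), so that factor-by-factor specialization is legitimate, and the bookkeeping remark that the two cycle indices are normalized by the same constant $|\GL_{n}(\bF_{q})|$ so that restricting the $\Mat_{n}$-sum to its invertible part yields the $\GL_{n}$-cycle index on the nose.
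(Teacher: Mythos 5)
Your proposal is correct and is essentially the paper's own argument: specialize $x_{t,\nu}\mapsto 0$ for nonempty $\nu$ in Lemma \ref{Sto}, note that on the left only the invertible matrices (those with $\mu_{t}(A)=\es$) survive, and that on the right the Euler factor at $P(t)=t$ collapses to $1$. The extra remarks about well-definedness of the infinite product and the common normalization by $|\GL_{n}(\bF_{q})|$ are fine but not needed beyond what the paper already says.
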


\begin{proof} If we take $x_{t, \nu} = 0$ for all nonempty partitions $\nu$ in the expression

\[\mc{Z}([\Mat_{n}/\GL_{n}](\bF_{q}), \bs{x}) = \frac{1}{|\GL_{n}(\bF_{q})|}\sum_{A \in \Mat_{n}(\bF_{q})}\prod_{P \in |\bA^{1}_{\bF_{q}}|}x_{P, \mu_{P}(A)},\]

\

we get $\mc{Z}(\GL_{n}(\bF_{q}), \bs{x})$ because any square matrix is invertible if and only if it does not have $0$ eigenvalue (or equivalently, if it does not have any invariant factor divisible by $t$). Thus, Lemma \ref{Sto} implies the result.
\end{proof}

\

\hspace{3mm} The following is the third lemma we need, due to Stong (from our best knowledge). This lemma serves a crucial role in the proofs of Theorem \ref{main1} and Theorem \ref{main2}, and Stong's proof relies on the fact that there are $q^{n(n-1)}$ nilpotent matrices in $\Mat_{n}(\bF_{q})$, a famous result of Fine and Herstein \cite{FH}.

\

\begin{lem}[Proposition 19 in \cite{Sto}]\label{key} For any $P \in |\bA^{1}_{\bF_{q}}|$, we have

\[\sum_{\nu \in \mc{P}} \frac{y^{|\nu|}}{|\Aut_{\bF_{q}[t]}(H_{P,\nu})|} = \prod_{i=1}^{\infty}\frac{1}{1 - q^{-i\deg(P)}y} \in \bQ\llb y \rrb.\]
\end{lem}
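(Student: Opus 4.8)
The identity to prove,
\[
\sum_{\nu \in \mc{P}} \frac{y^{|\nu|}}{|\Aut_{\bF_{q}[t]}(H_{P,\nu})|} = \prod_{i=1}^{\infty}\frac{1}{1 - q^{-i\deg(P)}y},
\]
depends only on $q^{\deg(P)}$, so I would first reduce to the case $\deg(P) = 1$ by setting $Q := q^{\deg(P)}$ and writing the residue field $\bF_q[t]/(P) = \bF_Q$; then $H_{P,\nu}$ is a finite $\bF_Q[[u]]$-module of type $\nu$, and Macdonald's formula for $|\Aut_R(H)|$ over a DVR with residue field of size $Q$ shows the left side depends only on $Q$ and $\nu$. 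So it suffices to prove $\sum_{\nu} y^{|\nu|}/w(Q,\nu) = \prod_{i \geq 1}(1-Q^{-i}y)^{-1}$.

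The cleanest route is to extract this from Lemma \ref{Sto} (Stong's cycle index identity), which the excerpt already grants. The plan is to specialize the formal variables: in
\[
\sum_{n=0}^{\infty}\mc{Z}([\Mat_{n}/\GL_{n}](\bF_{q}), \bs{x})u^{n} = \prod_{P \in |\bA^{1}_{\bF_{q}}|}\sum_{\nu \in \mc{P}}\frac{x_{P,\nu}u^{|\nu|\deg(P)}}{|\Aut_{\bF_{q}[t]}(H_{P,\nu})|},
\]
set $x_{P_0,\nu} = y^{|\nu|}$ for one fixed $P_0$ of degree $d$ and $x_{P,\nu} = 1$ for all other $P$ and all $\nu$ (including $\nu = \es$, consistent with $x_{P,\es}=1$). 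On the right this isolates the factor $\sum_\nu y^{|\nu|} u^{|\nu| d}/|\Aut(H_{P_0,\nu})|$ times $\prod_{P \neq P_0}\sum_\nu u^{|\nu|\deg P}/|\Aut(H_{P,\nu})|$. On the left the specialization of $\prod_P x_{P,\mu_P(A)}$ becomes $y^{|\mu_{P_0}(A)|}$, so the left side counts matrices weighted by $y$ to the size of the $P_0$-part. The remaining task is to evaluate both sides independently enough to solve for the unknown sum.

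The key computational input — the step I expect to be the real obstacle — is the generating-function identity
\[
\sum_{n=0}^{\infty}\frac{u^n}{|\GL_n(\bF_q)|}|\Mat_n(\bF_q)|_{\text{nilpotent-graded}} \quad\text{and more precisely}\quad \sum_{\nu}\frac{u^{|\nu|}}{|\Aut_{\bF_q[[u]]}(H_\nu)|} = \prod_{i=1}^\infty \frac{1}{1-q^{-i}u},
\]
which is exactly the $\deg(P)=1$ case and is what Stong proves (Proposition 19 of \cite{Sto}) using the Fine--Herstein count of $q^{n(n-1)}$ nilpotent matrices in $\Mat_n(\bF_q)$. Concretely: the coefficient of $u^n$ on the left of Lemma \ref{Sto}, after setting all $x_{P,\nu}=1$, is $|\Mat_n(\bF_q)|/|\GL_n(\bF_q)|$; matching against $\prod_P \sum_\nu u^{|\nu|\deg P}/|\Aut(H_{P,\nu})|$ and using that the local factor at each $P$ of degree $d$ has the form $f(u^d)$ for a single universal series $f$, one reduces to showing $f(u) = \prod_i (1-q^{-i}u)^{-1}$; and the $u^n$-coefficient identity here is precisely equivalent, via the orbit–stabilizer/cycle-index bookkeeping, to counting matrices by the size of their $t$-primary (i.e.\ nilpotent) part, whose top-size piece is governed by the nilpotent count. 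So the honest content is: (i) unwind the cycle index specialization to pin down that the local factor is a single series $f(u^d)$ in $u^d$ with $\bF_q$-size $q^d$; () prove $f(y) = \prod_{i\ge1}(1-q^{-i}y)^{-1}$ via the Fine--Herstein nilpotent count (this is Stong's argument, which I would cite or reproduce); (i) translate back to arbitrary $P$ by the $\deg(P)=1$-over-$\bF_{q^{\deg P}}$ reduction and Macdonald's automorphism formula. The only subtlety to watch is convergence/validity in $\bQ\llb y\rrb$ versus $\bQ[\bs x]\llb u\rrb$ — but since the specialization $x_{P,\nu}\mapsto y^{|\nu|}$ is continuous in the $u$-adic topology and each coefficient of $u^n$ involves only finitely many $P$ and $\nu$, the substitution is legitimate.
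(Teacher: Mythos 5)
The paper does not prove this lemma at all: it is quoted as Proposition 19 of Stong's paper \cite{Sto}, with only the remark that Stong's argument rests on the Fine--Herstein count of $q^{n(n-1)}$ nilpotent matrices in $\Mat_n(\bF_q)$. Your plan is essentially the same in substance — the crux, the degree-one identity $\sum_{\nu} y^{|\nu|}/w(q,\nu)=\prod_{i\ge 1}(1-q^{-i}y)^{-1}$, is likewise deferred to Stong's nilpotent-count argument (orbit--stabilizer plus Fine--Herstein, then Euler's $q$-series identity), and your reduction of general $P$ to that case via Macdonald's formula $|\Aut_{\bF_q[t]}(H_{P,\nu})|=w(q^{\deg P},\nu)$ is correct. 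The only caveat is that the detour through Lemma \ref{Sto} buys nothing — the single identity for the product of all local factors cannot isolate one factor — but you correctly abandon it in favor of the direct local computation, so this is a harmless digression rather than a gap.
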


\

\begin{rmk} \label{sum=1} Using Macdonald's result in (1.6) on p.181 in \cite{Mac}, Lemma \ref{key} implies that for any DVR $(R, \mf{m})$ with $R/\mf{m} = \bF_{q}$, we have

\[\sum_{H \in \Mod_{R}^{<\infty}} \frac{y^{|\nu|}}{|\Aut_{R}(H)|} = \prod_{i=1}^{\infty}\frac{1}{1 - q^{-i}y} \in \bQ\llb y \rrb.\]

\

Hence, taking $y = 1$, this proves that the assignment $\{H\} \mapsto |\Aut_{R}(H)|^{-1}\prod_{i=1}^{\infty}(1 - q^{-i})$ is indeed a probability measure on $\Mod_{R}^{<\infty}$.
\end{rmk}

\

\section{Proofs of Theorem \ref{main1} and \ref{main2}} 

\hspace{3mm} In this section, we provide proofs of Theorem \ref{main1} and Theorem \ref{main2}. Due to Section \ref{reduction}, this will finish the proofs of Theorem \ref{main1x}, Theorem \ref{main2x}, and Theorem \ref{main3x}.

\

\subsection{Proof of Theorem \ref{main1}}\label{limit} We first deal with the sequence $(b_{n}(d))_{n \in \bZ_{\geq 0}}$ appearing in Theorem \ref{main1x} and Theorem \ref{main1} as well as some convergences of relevant infinite products of formal power series. Such a product needs to be treated with care because its expansion leads to a power series whose coefficients are given by infinite sums.

\

\hspace{3mm} First, fix $0 \leq t < 1$. The sequence

$$\prod_{i=1}^{n}(1 - t^{i}) = (1 - t)(1 - t^{2}) \cdots (1 - t^{n})$$

\

is decreasing in $n$, while it is bounded below by $0$. Thus, the sequence converges in $\bR$. Since $0 \leq t < 1$, an application of Theorem 15.4 of \cite{Rud} ensures that the limit of this product as $n \ra \infty$ is nonzero. In particular, taking $t = q^{-1}$, we see $\prod_{i=1}^{\infty}(1 - q^{-i}) > 0$ makes sense, and so does

$$\prod_{i=1}^{\infty}\frac{1 - q^{-di}}{1 - q^{-i}} := \frac{\prod_{i=1}^{\infty}(1 - q^{-di})}{\prod_{i=1}^{\infty}(1 - q^{-i})}$$

\

for any $d \in \bZ_{\geq 1}$. The power series $\sum_{i=1}^{\infty}q^{-i}u$ has radius of convergence $q$ at $u = 0$. Hence, by taking $f_{i}(u) = 1 - q^{-i}u$ in Theorem 15.6 of \cite{Rud}, we see that the product

$$\prod_{i=1}^{\infty}f_{i}(u) = \prod_{i=1}^{\infty}(1 - q^{-i}u)$$

\

converges uniformly on any compact subsets of $\{u \in \bC : |u| < q\}$. The power series $\sum_{i=1}^{\infty}q^{-di}u^{d}$ also has radius of convergence $q$ at $u = 0$, so we may apply the same theorem to deduce that the product

$$\prod_{i=1}^{\infty}(1 - (q^{-i}u)^{d})$$
 
\

converges uniformly on any compact subsets of $\{u \in \bC : |u| < q\}$. This implies that both products are holomorphic in $\{u \in \bC : |u| < q\}$, and hence so is their ratio (as none of them vanishes in the specified open disc of $\bC$ with radius $q$). Thus, we may rewrite it as a power series

$$\prod_{i=1}^{\infty}\frac{1 - (q^{-i}u)^{d}}{1 - q^{-i}u} = a_{0}(d) + a_{1}(d)u + a_{2}(d)u^{2} + \cdots,$$

\

whose radius of convergence is $q$ at $u = 0$. Thus, we can evaluate both sides at $u = 1 < q$ to have:

$$\prod_{i=1}^{\infty}\frac{1 - q^{-id}}{1 - q^{-i}} = a_{0}(d) + a_{1}(d) + a_{2}(d) + \cdots.$$

\

Since the only holomorphic function in the open disc with a limit point in its zero set (in the open disc) must be the zero function, we must have the same identity

$$\prod_{i=1}^{\infty}\frac{1 - (q^{-i}u)^{d}}{1 - q^{-i}u} = a_{0}(d) + a_{1}(d)u + a_{2}(d)u^{2} + 
\cdots$$

\

in $\bC\llb u \rrb$ as well, where we take $u$ to be formal. Therefore, in $\bC\llb u \rrb$, we have

\begin{align*}
b_{0}(d) + b_{1}(d)u + b_{2}(d)u^{2} + \cdots &= \frac{1}{1 - u}\prod_{i=1}^{\infty}\frac{1 - (q^{-i}u)^{d}}{1 - q^{-i}u} \\
&= (1 + u + u^{2} + \cdots)(a_{0}(d) + a_{1}(d)u + a_{2}(d)u^{2} + \cdots) \\
&= a_{0}(d) + (a_{0}(d) + a_{1}(d))u + (a_{0}(d) + a_{1}(d) + a_{2}(d))u^{2} + \cdots.
\end{align*}

\

This implies that $b_{n}(d) = a_{0}(d) + a_{1}(d) + \cdots + a_{n}(d)$, so

$$\lim_{n \ra \infty}b_{n}(d) = a_{0}(d) + a_{1}(d) + \cdots =  \prod_{i=1}^{\infty}\frac{1 - q^{-id}}{1 - q^{-i}},$$

\

and this proves the last parts of Theorem \ref{main1x} and Theorem \ref{main1}. Thus, we only need to show the statement of Theorem \ref{main1} before we take the limit $n \ra \infty$ to finish its proof.

\

\begin{proof}[Proof of Theorem \ref{main1}] We denote by $P_{0}$ to mean $P$ in the statement for this proof. We may assume that

$$H = H_{P_{0},\ld} = \bF_{q}[t]/(P_{0}(t))^{\ld_{1}} \op \cdots \op \bF_{q}[t]/(P_{0}(t))^{\ld_{l}}$$

\

for some fixed partition $\ld = (\ld_{1}, \dots, \ld_{l}) \in \mc{P}$. The case $\ld = \es$ (i.e., $H = 0$) turns out to be the most important. For this, it is enough to show that

\[b_{n}(\deg(P_{0})) = \frac{|\{A \in \Mat_{n}(\bF_{q}) : \mu_{P_{0}}(A) = \es\}|}{|\GL_{n}(\bF_{q})|}.\]

\

To see this, let $a_{n}(P_{0})$ be the expression on the right-hand side. Take $x_{P_{0}, \nu} = 0$ for all nonempty $\nu$ and $x_{P,\nu} = 1$ for all $P \neq P_{0}$ in Lemma \ref{Sto}, which leads to

\begin{align*}
\sum_{n=0}^{\infty}a_{n}(P_{0}) u^{n} &= \prod_{\substack{P \in |\bA^{1}_{\bF_{q}}|, \\ P(t) \neq P_{0}(t)}}\sum_{\nu \in \mc{P}} \frac{u^{|\nu|\deg(P)}}{|\Aut_{\bF_{q}[t]}(H_{P,\nu})|} \\
&= \left(\sum_{\nu \in \mc{P}} \frac{u^{|\nu|\deg(P_{0})}}{|\Aut_{\bF_{q}[t]}(H_{P_{0},\nu})|}\right)^{-1}\left(\sum_{\nu \in \mc{P}}\frac{u^{|\nu|}}{|\Aut_{\bF_{q}[t]}(H_{t,\nu})|}\right)\prod_{\substack{P \in |\bA^{1}_{\bF_{q}}|, \\ P(t) \neq t}}\sum_{\nu \in \mc{P}} \frac{u^{|\nu|\deg(P)}}{|\Aut_{\bF_{q}[t]}(H_{P,\nu})|} \\
&= \left( \prod_{i=1}^{\infty}  \frac{1 - (q^{-i}u)^{\deg(P_{0})}}{1 - q^{-i}u} \right)\left(\frac{1}{1 - u}\right) \\
&= \prod_{i=1}^{\infty}  \frac{1 - (q^{-i}u)^{\deg(P_{0})}}{1 - q^{1-i}u},
\end{align*}

\

where we applied Lemma \ref{Kun} and Lemma \ref{key} as well. This shows that $a_{n}(P_{0}) = b_{n}(\deg(P_{0}))$ by definition of $b_{n}(d)$ in the statement of Theorem \ref{main1x} and Theorem \ref{main1}.

\

\hspace{3mm} Now, we may assume that the partition $\ld = (\ld_{1}, \dots, \ld_{l})$ is nonempty (i.e., $l > 0$). In Lemma \ref{Sto}, take $x_{P, \nu} = 1$ on both sides for $P \neq P_{0}$ to get

\[\sum_{n=0}^{\infty}\sum_{A \in \Mat_{n}(\bF_{q})} \frac{x_{P_{0}, \mu_{P_{0}}(A)}}{|\GL_{n}(\bF_{q})|} u^{n} =  \left(\sum_{\nu \in \mc{P}}\frac{x_{P_{0},\nu}u^{|\nu|\deg(P_{0})}}{|\Aut_{\bF_{q}[t]}(H_{P_{0},\nu})|}\right)\left(\prod_{P \neq P_{0}}\sum_{\nu \in \mc{P}}\frac{u^{|\nu|\deg(P)}}{|\Aut_{\bF_{q}[t]}(H_{P,\nu})|}\right).\]

\

Next, we take $x_{P_{0}, \nu} = 0$ for all nonempty $\nu \neq \ld$ and $x_{P_{0}, \ld} = 1$. Then

\begin{align*}
1 &+ \sum_{n=1}^{\infty}\left(\frac{|\{A \in \Mat_{n}(\bF_{q}) : \mu_{P_{0}}(A) = \ld \text{ or } \es\}|}{|\GL_{n}(\bF_{q})|}\right) u^{n} \\
&=  \left(1 + \frac{u^{|\ld|\deg(P_{0})}}{|\Aut_{\bF_{q}[t]}(H_{P_{0},\ld})|}\right) \left(\prod_{P \neq P_{0}}\sum_{\nu \in \mc{P}}\frac{u^{|\nu|\deg(P)}}{|\Aut_{\bF_{q}[t]}(H_{P,\nu})|}\right) \\
&=  \left(1 + \frac{u^{|\ld|\deg(P_{0})}}{|\Aut_{\bF_{q}[t]}(H_{P_{0},\ld})|}\right)\left(\prod_{P \in |\bA^{1}_{\bF_{q}}|}\sum_{\nu \in \mc{P}}\frac{u^{|\nu|\deg(P)}}{|\Aut_{\bF_{q}[t]}(H_{P,\nu})|}\right)\left(\sum_{\nu \in \mc{P}}\frac{u^{|\nu|\deg(P_{0})}}{|\Aut_{\bF_{q}[t]}(H_{P_{0},\nu})|}\right)^{-1} \\
&=  \left(1 + \frac{u^{|\ld|\deg(P_{0})}}{ |\Aut_{\bF_{q}[t]}(H_{P_{0},\ld})| }\right)\left(\prod_{P(t) \neq t}\sum_{\nu \in \mc{P}}\frac{u^{|\nu|\deg(P)}}{ |\Aut_{\bF_{q}[t]}(H_{P,\nu})| }\right) \left(\sum_{\nu \in \mc{P}}\frac{u^{|\nu|}}{ |\Aut_{\bF_{q}[t]}(H_{(t),\nu})| }\right) \left(\sum_{\nu \in \mc{P}}\frac{u^{|\nu|\deg(P_{0})}}{ |\Aut_{\bF_{q}[t]}(H_{P_{0},\nu})| }\right)^{-1} \\
&=  \left(1 + \frac{u^{|\ld|\deg(P_{0})}}{ |\Aut_{\bF_{q}[t]}(H_{P_{0},\ld})| }\right)\left(\frac{1}{1-u}\right) \left(\prod_{i=1}^{\infty} \frac{1-(q^{-i}u)^{\deg(P_{0})}}{1-q^{-i}u}\right),
\end{align*}

\

applying Lemma \ref{Kun} and Lemma \ref{key}. Thus, we have

\begin{align*}
1 + \sum_{n=1}^{\infty}&\left(\frac{|\{A \in \Mat_{n}(\bF_{q}) : \mu_{P_{0}}(A) = \ld \text{ or } \es\}|}{|\GL_{n}(\bF_{q})|}\right) u^{n} \\
&= (1 + cu^{h})(1 + b_{1}u +b_{2}u^{2} + b_{3}u^{3} + \cdots) \\
&= 1 + b_{1}u + b_{2}u + \cdots + b_{h-1}u^{h-1} + (b_{h} + c)u^{h} + (b_{h+1} + cb_{1})u^{h+1} + (b_{h+2} + cb_{2})u^{h+2} + \cdots, 
\end{align*}

where

\begin{itemize}
	\item $c = |\Aut_{\bF_{q}[t]}(H_{P_{0},\ld})|^{-1} = |\Aut_{\bF_{q}[t]}(H)|^{-1}$,
	\item $b_{n} = b_{n}(\deg(P_{0}))$, and
	\item $h = |\ld|\deg(P_{0})$ = $\dim_{\bF_{q}}(H)$.
\end{itemize}

\

Thus, continuing the previous computations, since we have established that

\[b_{n} = \frac{|\{A \in \Mat_{n}(\bF_{q}) : \mu_{P_{0}}(A) = \es\}|}{|\GL_{n}(\bF_{q})|},\]

\

we have (as $b_{0} = 1$)

\[\frac{|\{A \in \Mat_{n}(\bF_{q}) : \mu_{P_{0}}(A) = \ld\}|}{|\GL_{n}(\bF_{q})|} = \left\{
	\begin{array}{ll}
	cb_{n-h} = |\Aut_{\bF_{q}[t]}(H)|^{-1}b_{n-h}(\deg(P_{0})) & \mbox{if } n \geq h = |\ld|\deg(P_{0}), \\
	0 & \mbox{if } n < h = |\ld|\deg(P_{0}).
	\end{array}\right.\]
	
\

By multiplying

\begin{align*}
\frac{|\GL_{n}(\bF_{q})|}{|\Mat_{n}(\bF_{q})|} &= \frac{(q^{n} - 1)(q^{n} - q) \cdots (q^{n} - q^{n-1})}{q^{n^{2}}} \\
&= (1 - q^{-1})(1 - q^{-2}) \cdots (1 - q^{-n})
\end{align*}

\

both sides, we finish the proof.
\end{proof}

\

\subsection{Proof of Theorem \ref{main2}}\label{pf} Before the proof, we define one more terminology that will enable us to write a clearer proof. Fix any subset $X \sub \bA^{1}_{\bF_{q}} = \Spec(\bF_{q}[t])$. We define the \textbf{cycle index of $X$} (relative to $\bA^{1}_{\bF_{q}}$) as follows:

$$\hat{\bs{Z}}(X, \bs{x}, u) = \prod_{P \in X \cap |\bA^{1}_{\bF_{q}}|}\sum_{\nu \in \mc{P}}\frac{x_{P,\nu}u^{|\nu|\deg(P)}}{|\Aut_{\bF_{q}[t]}(H_{P,\nu})|},$$

\

where each $P \in |\bA^{1}_{\bF_{q}}|$ simultaneously means a monic irreducible polynomial or the maximal ideal $(P(t))$ of $\bF_{q}[t]$ generated by it (i.e., a closed point of $\bA^{1}_{\bF_{q}}$). Note that by Lemma \ref{Sto}, we have

$$\hat{\bs{Z}}(\bA^{1}_{\bF_{q}}, \bs{x}, u) = \sum_{n=0}^{\infty}\mc{Z}([\Mat_{n}/\GL_{n}](\bF_{q}), \bs{x})u^{n}.$$

\

That is, the cycle index of the affine line $\bA^{1}_{\bF_{q}}$ is the generating function for the $n$-th cycle index of the conjugate action $\GL_{n}(\bF_{q}) \acts \Mat_{n}(\bF_{q})$ for all $n \in \bZ_{\geq 0}$. Another important example is

$$\hat{\bs{Z}}(\{P\}, \bs{x}, u) = \sum_{\nu \in \mc{P}}\frac{x_{P,\nu}u^{|\nu|\deg(P)}}{|\Aut_{\bF_{q}[t]}(H_{P,\nu})|},$$

\

where $P \in |\bA^{1}_{\bF_{q}}|$. By definition, whenever we have finitely many $P_{1}, \dots, P_{r} \in X \cap |\bA^{1}_{\bF_{q}}|$, we have

$$\hat{\bs{Z}}(X, \bs{x}, u) = \hat{\bs{Z}}(X \sm \{P_{1}, \dots, P_{r}\}, \bs{x}, u) \hat{\bs{Z}}(\{P_{1}\}, \bs{x}, u) \cdots \hat{\bs{Z}}(\{P_{r}\}, \bs{x}, u).$$

\

Denote by $\hat{\bs{Z}}(X, u)$ what we get by taking all $x_{P,\nu} = 1$ in $\hat{\bs{Z}}(X, \bs{x}, u)$. Lemma \ref{Kun} implies that

$$\hat{\bs{Z}}(\bA^{1}_{\bF_{q}} \sm \{(t)\}, u) = 1 + u + u^{2} + \cdots = \frac{1}{1 - u}.$$

\

Finally, Lemma \ref{key} implies that for any $P \in |\bA^{1}_{\bF_{q}}|,$ we have

$$\hat{\bs{Z}}(\{P\}, u) = \sum_{\nu \in \mc{P}}\frac{u^{|\nu|\deg(P)}}{|\Aut_{\bF_{q}[t]}(H_{P,\nu})|} = \prod_{i=1}^{\infty}\frac{1}{1 - (q^{-i}u)^{\deg(P)}}.$$

\

We are now ready to give the proof of Theorem \ref{main2}.

\

\begin{proof}[Proof of Theorem \ref{main2}] We will use the notations and the arguments given above. Taking $x_{P,\nu} = 1$ for all $P \notin \{P_{1}, \dots, P_{r}\}$, while still denoting $\bs{x}$ to mean the sequence of variables after such evaluations, we have

\begin{align*}
\hat{\bs{Z}}(\bA^{1}, \bs{x}, u) &= \hat{\bs{Z}}(\bA^{1}_{\bF_{q}} \sm \{P_{1}, \dots, P_{r}\}, u) \hat{\bs{Z}}(\{P_{1}\}, \bs{x}, u) \cdots \hat{\bs{Z}}(\{P_{r}\}, \bs{x}, u) \\
&= \frac{\hat{\bs{Z}}(\bA^{1}_{\bF_{q}} \sm \{(t)\}, u) \hat{\bs{Z}}(\{(t)\}, u) \hat{\bs{Z}}(\{P_{1}\}, \bs{x}, u) \cdots \hat{\bs{Z}}(\{P_{r}\}, \bs{x}, u)}{ \hat{\bs{Z}}(\{P_{1}\}, u) \cdots \hat{\bs{Z}}(\{P_{r}\}, u)} \\
&= \left(\frac{1}{1 - u}\right)\frac{\hat{\bs{Z}}(\{(t)\}, u) \hat{\bs{Z}}(\{P_{1}\}, \bs{x}, u) \cdots \hat{\bs{Z}}(\{P_{r}\}, \bs{x}, u)}{ \hat{\bs{Z}}(\{P_{1}\}, u) \cdots \hat{\bs{Z}}(\{P_{r}\}, u)}.
\end{align*}

\

Without loss of generality, suppose that $\ld^{(1)}, \dots, \ld^{(m)}$ are nonempty, while $\ld^{(m+1)}, \dots, \ld^{(r)} = \es$, for some $0 \leq m \leq r$. In the above identity, take $x_{P_{j},\nu} = 0$ for nonempty $\nu$ not equal to $\ld^{(j)}$ while $x_{P_{j},\ld^{(j)}} = 1$ for $1 \leq j \leq r$. We will still write $\bs{x}$ to mean the sequence of variables after evaluations, although this is now just a sequence in $\{0, 1\}$. Arguing as in Section \ref{limit}, we may compute the limit of the coefficient of $u^{n}$ of the left-hand side as $n \ra \infty$, which results in

\begin{align*}
&\lim_{n \ra \infty} \Prob_{A \in \Mat_{n}(\bF_{q})} \left(\begin{array}{c}
\mu_{P_{j}}(A) \in \{\es, \ld^{(j)}\} \text{ for } 1 \leq j \leq m, \\
\mu_{P_{m+1}}(A) = \cdots = \mu_{P_{r}}(A) = \es
\end{array}\right) \\
&= \frac{ \hat{\bs{Z}}(\{P_{1}\}, \bs{x}, 1) \cdots \hat{\bs{Z}}(\{P_{r}\}, \bs{x}, 1)}{ \hat{\bs{Z}}(\{P_{1}\}, 1) \cdots \hat{\bs{Z}}(\{P_{r}\}, 1)} \\
&= \left[\prod_{j=1}^{m}\left(1 + \frac{1}{\#\Aut_{\bF_{q}}(H_{P,\ld^{(j)}})}\right) \prod_{i=1}^{\infty}(1 - q^{-i\deg(P_{j})})\right] \cdot \left[\prod_{j=m+1}^{r}\prod_{i=1}^{\infty}(1 - q^{-i\deg(P_{j})})\right],
\end{align*}

\

where the first identity used 

$$\hat{\bs{Z}}(\{(t)\}, 1) = \lim_{n \ra \infty}\frac{|\Mat_{n}(\bF_{q})|}{|\GL_{n}(\bF_{q})|}.$$

\

This finishes the proof, because one may either argue by induction on $m$ or see that the product measure on

$$\Mod_{\bF_{q}[t]_{(P_{1})}}^{<\infty} \times \cdots \times \Mod_{\bF_{q}[t]_{(P_{r})}}^{<\infty}$$

\

given by Cohen-Lenstra measures match the limiting probability with specified parts at $P_{1}, \dots, P_{r}$ for enough events that generate the finest $\sigma$-algebra on the product.
\end{proof}

\

\section{Another possible connection between Haar measure and $\bF_{q}$-matrices}

\hspace{3mm} The easiest case $N = 0$ for Lemma \ref{Haar} provides a connection between random matrices $A \in \Mat_{n}(\bF_{q} \llb t \rrb)$ with respect to the Haar measure and $\ol{A} \in \Mat_{n}(\bF_{q})$ with respect to the uniform distribution. There seem to be more mysterious connections between these two different random matrices. We illustrate one incidence here. Note that the $\bF_{q}[t]$-module structure on $\bF_{q}^{n}$ given by $\ol{A} \in \Mat_{n}(\bF_{q})$ is precisely $\coker(\ol{A} - tI_{n})$, where $\ol{A} - tI_{n}$ is viewed as a matrix over $\bF_{q}[t]$. Hence, if $\ol{A} - tI_{n}$ is viewed as a matrix over $\bF_{q} \llb t \rrb$, we have $\coker(\ol{A} - tI_{n}) \simeq (\ol{A} \acts \bF_{q}^{n})[t^{\infty}]$. Having said that, Theorem \ref{main1} with $P(t) = t$ can be restated as follows. Given any finite length $t^{\infty}$-torsion module $H$ over $\bF_{q}[t]$, we have

$$\Prob_{\ol{A} \in \Mat_{n}(\bF_{q})}(\coker(\ol{A} - tI_{n}) \simeq H) = \left\{
	\begin{array}{ll}
	\frac{1}{|\Aut_{\bF_{q}[t]}(H)|}\prod_{i=1}^{n}(1 - q^{-i}) & \mbox{if } n \geq \dim_{\bF_{q}}(H) \text{ and } \\
	0 & \mbox{if } n < \dim_{\bF_{q}}(H),
	\end{array}\right.$$

\

where the cokernels are taken over $\bF_{q} \llb t \rrb$. On the other hand, Proposition \ref{FW} with $R = \bF_{q} \llb t \rrb$ says

\begin{align*}
\Prob&_{A \in \Mat_{n}(\bF_{q} \llb t \rrb)} (\coker(A) \simeq H) \\
&= \left\{
	\begin{array}{ll}
	\frac{1}{|\Aut_{\bF_{q}[t]}(H)|} \left[ \prod_{i=1}^{n}\left( 1 - q^{-i} \right) \right]\left[ \prod_{j=n-l_{H}+1}^{n}\left( 1 - q^{-j} \right) \right] & \mbox{if } n \geq l_{H} = \dim_{\bF_{q}}(H/tH),  \\
	0 & \mbox{if } n < l_{H}.
	\end{array}\right.
\end{align*}

\

We may simultaneously consider both probabilities by using the projection map $\Mat_{n}(\bF_{q}\llb t \rrb) \tra \Mat_{n}(\bF_{q})$ given by $t \mapsto 0$. For each $\ol{A} \in \Mat_{n}(\bF_{q})$, the first probability considers the cokernel of a special representative $\ol{A} - tI_{n}$ in the fiber $\ol{A} + t \Mat_{n}(\bF_{q}\llb t \rrb)$ of $\ol{A}$. On the other hand, the second probability considers all the matrices in the fiber. The two probabilities are the same if and only if $H = 0$. It is interesting to note that regardless of the choice of $H$, both probabilities converge to the same Cohen-Lenstra distribution of $\bF_{q}\llb t \rrb$ as $n \ra \infty$, which is another incidence of ``universality'' we mentioned in the introduction. Giving more careful analysis on this discrepancy of the two different probabilities might be an interesting work in the near future.


\newpage

\end{document}